\pgfplotsset{width=5cm,compat=1.9}
\newtheorem{theorem}{Theorem}[section]
\newtheorem{lemma}[theorem]{Lemma}
\newtheorem{corollary}{Corollary}[theorem]
\theoremstyle{definition}
\newtheorem{definition}[theorem]{Definition}
\DeclareMathOperator{\rank}{rank}
\DeclareMathOperator{\sgn}{sgn}
\DeclareMathOperator{\conv}{conv}
\DeclareMathOperator{\ReLU}{ReLU}
\DeclareMathOperator{\Lip}{Lip}
\DeclareMathOperator{\I}{Im}
\DeclareMathOperator{\R}{Re}
\date{\today}
\title{On the Expressive Power of Neural Networks}
\begin{document}

\maketitle

\clearpage

\begin{abstract}
In 1989 George Cybenko proved in a landmark paper that wide shallow neural networks can approximate arbitrary continuous functions on a compact set \cite{univAppr}. This universal approximation theorem sparked a lot of follow-up research. \\

Shen, Yang and Zhang determined optimal approximation rates for ReLU-networks in $L^p$-norms with $p \in [1,\infty)$ \cite{optimalApproximationRate}. Kidger and Lyons proved a universal approximation theorem for deep narrow ReLU-networks \cite{universalApproximation}. Telgarsky gave an example of a deep narrow ReLU-network that cannot be approximated by a wide shallow ReLU-network unless it has exponentially many neurons \cite{widthInefficiency}. \\

However, there are even more questions that still remain unresolved. Are there any wide shallow ReLU-networks that cannot be approximated well by deep narrow ReLU-networks \cite{widthDepthQuestion}? Is the universal approximation theorem still true for other norms like the Sobolev norm $W^{1,1}$? Do these results hold for activation functions other than ReLU? \\

We will answer all of those questions and more with a framework of two expressive powers. The first one is well-known and counts the maximal number of linear regions of a function calculated by a ReLU-network. We will improve the best known bounds for this expressive power. The second one is entirely new.
\end{abstract}

\newpage

\tableofcontents

\newpage

\section{Introduction}

The goal of this thesis is to prove various structural results about neural networks using so-called expressive powers. More precisely, we will consider feedforward neural networks, which are defined in the following way.

\begin{definition}[\cite{differentLowerBoundRn}, Definition 1]
Let $\rho: \mathbb{R} \to \mathbb{R}$, $L \in \mathbb{N}$ and 
\begin{align*}
    \mathbf{n} = (n_0,n_1,\dots,n_{L+1}) \in \mathbb{N}^{L+2}.
\end{align*}
A \textit{feedforward neural network} with design $\mathbf{n}$ and activation function $\rho$ calculates the function $f: \mathbb{R}^{n_0} \to \mathbb{R}^{n_{L+1}}$ defined by
\begin{align*}
    f \coloneqq f_L \circ \rho \circ f_{L-1} \circ \rho \circ \cdots \circ f_1 \circ \rho \circ f_0,
\end{align*}
where $f_i: \mathbb{R}^{n_i} \to \mathbb{R}^{n_{i+1}}$ for $i = 0,\dots,L$ is an affine function and $\rho$ is used component-wise. This neural network consists of $L$ \textit{hidden layers}, where the layers are defined as $\rho \circ f_i$ for $i = 0,\dots,L-1$. The final affine function $f_L$ is called the \textit{output layer}.
\end{definition}

Hence, a neural network is the composition of its layers. We can further decompose layers into neurons.

\begin{definition}[\cite{universalApproximation}, Section 4]
A \textit{neuron} is defined as the composition of an activation function $\rho$ and an affine function $f: \mathbb{R}^{n} \to \mathbb{R}$ with $n \in \mathbb{N}$.
\end{definition}

This means that we can think of the $i$th hidden layer as a collection of $n_i$ neurons. Most of the time we will use $\ReLU(x) \coloneqq \max\{0,x\}$ as the activation function. Any feedforward neural network with this activation function will compute a continuous function. We are particularly interested in the space of functions that can be calculated.

\begin{definition}[\cite{lowerBoundRn}, Definition 1]
Let $L \in \mathbb{N}$ and 
\begin{align*}
\mathbf{n} = (n_0,n_1,\dots,n_{L+1}) \in \mathbb{N}^{L+2}.
\end{align*}
Define $\mathcal{F}_{\mathbf{n}}$ to be the set of functions $\mathbb{R}^{n_0} \to \mathbb{R}^{n_{L+1}}$ that can be computed by a feedforward neural network of design $\mathbf{n}$.
\end{definition}

While it seems unfeasible to describe any possible function in $\mathcal{F}_{\mathbf{n}}$ in a neat way, we can focus on attributes of those functions to get some insight. Consider a neural network of design $\mathbf{n}$ with ReLU as its activation function. One important characteristic of the members of $\mathcal{F}_{\mathbf{n}}$ is that they are composed of linear regions.

\begin{definition}[\cite{lowerBoundRn}, Definition 1]
An open connected subset $R \subseteq \mathbb{R}^{n_0}$ is called \textit{linear region} of $f: \mathbb{R}^{n_0} \to \mathbb{R}^{n_{L+1}}$ if $f|_R$ is affine and $f|_{\Tilde{R}}$ is non-affine for every open set $\Tilde{R} \supsetneq R$.
\end{definition}

These linear regions fit together to form a piecewise affine function, which is calculated by the neural network.

\begin{definition}[\cite{defPiecewiseAffine}, Definition 2.1]
We call a function $f: \mathbb{R}^{n_0} \to \mathbb{R}^{n_{L+1}}$ \textit{piecewise affine} if $f$ is continuous,
\begin{align*}
    M \coloneqq \{R \subseteq \mathbb{R}^{n_0}: R \text{ linear region of } f\}
\end{align*}
is finite and
\begin{align*}
    \mathbb{R}^{n_0} = \bigcup_{R \in M} \overline{R}.
\end{align*}
\end{definition}

Every function calculated by a ReLU-network is a piecewise affine function simply because compositions of piecewise affine functions are again piecewise affine. To get a sense of how complex a piecewise affine function is, one can for example count the number of linear regions. Generally speaking, more linear regions give rise to more complex functions. This idea is encapsulated in the definition of our first expressive power.

\begin{definition}[\cite{lowerBoundRn}, Definition 1]
We define the \textit{expressive power} $\mathcal{R}(\mathbf{n})$ as the maximal number of linear regions of a function in $\mathcal{F}_{\mathbf{n}}$.
\end{definition}

Next up, we will show that for the study of $\mathcal{R}(\mathbf{n})$ it is enough to consider $n_{L+1} = 1$. This is also stated in Section $2$ of \cite{upperBoundRn} but with a different proof. 

\begin{theorem}[\cite{upperBoundRn}, Section 2]
\label{theorem: n_L+1=1}
We have
\begin{align*}
\mathcal{R}(n_0,\dots,n_L,n_{L+1}) = \mathcal{R}(n_0,\dots,n_L,1).
\end{align*}
\end{theorem}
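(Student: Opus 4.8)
The plan is to prove the two inequalities separately. For the direction $\mathcal{R}(n_0,\dots,n_L,1) \leq \mathcal{R}(n_0,\dots,n_L,n_{L+1})$, I would take any scalar-valued $g \in \mathcal{F}_{(n_0,\dots,n_L,1)}$ realising $r$ linear regions and pad its output layer: replace the affine output map $\mathbb{R}^{n_L} \to \mathbb{R}$ by the affine map $\mathbb{R}^{n_L} \to \mathbb{R}^{n_{L+1}}$ whose first coordinate is the old one and whose remaining coordinates are constant $0$. The resulting network lies in $\mathcal{F}_{(n_0,\dots,n_L,n_{L+1})}$ and computes $x \mapsto (g(x),0,\dots,0)$, which is affine on an open set exactly when $g$ is; hence it has the same $r$ linear regions. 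Taking the supremum over $g$ gives this inequality.

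For the reverse (and harder) direction, I would fix any $f \in \mathcal{F}_{(n_0,\dots,n_L,n_{L+1})}$ with linear regions $R_1,\dots,R_N$, writing $f|_{R_i}(x) = A_i x + b_i$, and post-compose with a linear functional $\ell \in (\mathbb{R}^{n_{L+1}})^*$. Since the output layer $f_L$ is affine, $\ell \circ f_L$ is an affine map $\mathbb{R}^{n_L} \to \mathbb{R}$, so $g \coloneqq \ell \circ f$ is again computed by a network with the same hidden layers but output width $1$, i.e. $g \in \mathcal{F}_{(n_0,\dots,n_L,1)}$. Because $\ell$ is linear, $g$ is affine on every $R_i$, so the linear regions of $g$ are unions of the $R_i$ and $g$ has at most $N$ of them; the goal is to choose $\ell$ so that none of the $R_i$ merge, giving exactly $N$.

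The key step is to show such an $\ell$ exists for a generic choice. First I would observe that for any two regions $R_i, R_j$ sharing a facet, continuity of $f$ forces $A_i \neq A_j$: if $A_i = A_j$ then continuity across the facet also forces $b_i = b_j$, so $f$ would be affine on an open set strictly larger than $R_i$, contradicting maximality of the linear region. Thus $A_i - A_j$ is a nonzero matrix, and $\{\ell : \ell(A_i - A_j) = 0\}$ is a proper linear subspace of $(\mathbb{R}^{n_{L+1}})^*$. As there are only finitely many adjacent pairs, the union of these proper subspaces is not all of $(\mathbb{R}^{n_{L+1}})^*$, so I can pick $\ell$ with $\ell(A_i - A_j) \neq 0$, equivalently $\ell A_i \neq \ell A_j$, for every adjacent pair. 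For such $\ell$, adjacent pieces of $g$ have different linear parts and therefore do not merge into a single affine piece.

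The main obstacle is upgrading \enquote{no adjacent pair merges} to \enquote{no merging at all}, i.e. that each linear region of $g$ contains exactly one $R_i$. I would handle this by a connectivity argument: if some linear region $S$ of $g$ contained two distinct regions $R_i, R_j$, then (noting that the regions contained in $S$ cover $S$ up to a measure-zero set of boundary faces) removing the codimension-$\ge 2$ faces from the open connected set $S$ leaves it connected, so the facet-adjacency graph of $\{R_k : R_k \subseteq S\}$ must be connected, producing an adjacent pair of regions both lying in $S$. But on $S$ the function $g$ equals a single affine map, so this adjacent pair would share the same linear part, contradicting the choice of $\ell$. Hence $g$ has exactly $N$ linear regions, giving $\mathcal{R}(n_0,\dots,n_L,1) \geq N$; taking the supremum over $f$ completes the proof.
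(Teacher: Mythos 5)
Your proof is correct, and at the top level it follows the same strategy as the paper: reduce to output dimension one by post-composing with a generic linear functional and show that no two linear regions of $f$ merge under this projection. The differences lie in how genericity is obtained and in what is actually proved about merging, and they are worth recording. The paper chooses a vector $z \in \mathbb{R}^{n_{L+1}}$ whose entries are linearly independent over the countable ring $\mathbb{Q}[A^{(k)}_{ij}]$ generated by all entries of the region matrices, which separates \emph{every} pair of regions with $A^{(k)} \neq A^{(l)}$ at once; you instead pick $\ell$ outside a finite union of proper subspaces and only demand separation of facet-adjacent pairs, paying for this economy with the connectivity argument (deleting the codimension-$\geq 2$ faces keeps $S$ connected, so the facet-adjacency graph of the regions inside $S$ is connected). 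That extra step is not wasted effort: it supplies a justification the paper skips. The paper simply asserts that if the scalar-output network has fewer regions, then some pair with $A^{(k)} \neq A^{(l)}$ merges; this is not automatic, since a priori the only merging regions might all share the same matrix (with equal or different offsets $b$, which the paper's choice of $z$ does not control), and ruling that out requires exactly your observation that facet-adjacent regions must have distinct linear parts (equal matrices force equal offsets by continuity, contradicting maximality of the linear region) combined with adjacency-graph connectivity. So your write-up is, at this point, more complete than the paper's; the only caveat is that your facet and codimension-$\geq 2$ language implicitly uses the polyhedral structure of the regions of a ReLU network, which is legitimate here but worth stating. You also prove the easy inequality $\mathcal{R}(n_0,\dots,n_L,1) \leq \mathcal{R}(n_0,\dots,n_L,n_{L+1})$ explicitly by zero-padding the output layer, which the paper leaves implicit.
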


This gives us everything we need to find some values for $\mathcal{R}(\mathbf{n})$. We will start with the expressive power in the case $n_0 = 1$. In contrast to the general case, we can get an exact formula here. In \cite{upperBoundRn} it was proven that in case of $n_i \geq 3$ for $1 \leq i \leq L$ we have
\begin{equation*}
    \mathcal{R}(\mathbf{n}) = \prod_{i=1}^{L} (n_i + 1).
\end{equation*}
We will provide a slight generalization to $n_i \geq 2$.

\begin{theorem}[generalization of \cite{upperBoundRn}, Theorem 7]
\label{theorem: R(n) in 1 dimension}
Consider a neural network with $n_0 = 1$ and $n_i \geq 2$ for $i = 1,2,\dots,L$. Then we have
\begin{equation*}
    \mathcal{R}(\mathbf{n}) = 1 + \sum_{i=1}^L n_i \prod_{j=1}^{i-1} (n_j + \mathbbm{1}_{n_j > 2}).
\end{equation*}
\end{theorem}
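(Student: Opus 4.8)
The plan is to reduce everything to a one-dimensional breakpoint count and then prove matching upper and lower bounds by an induction over the hidden layers that simultaneously tracks two quantities. By \Cref{theorem: n_L+1=1} I may assume $n_{L+1}=1$, so the computed function $f=f_L\circ h_L$ is scalar piecewise affine, where $h_i:\mathbb{R}\to\mathbb{R}^{n_i}$ denotes the output of the $i$-th hidden layer ($h_0=\mathrm{id}$); for such a scalar map the number of linear regions is the number of breakpoints plus one. For a \emph{fixed} network I attach to each layer two integers: $R_i$, the number of linear regions of $h_i$, and $Q_i$, the maximal number of transversal interior zeros of $\ell\circ h_i$ over all affine functionals $\ell:\mathbb{R}^{n_i}\to\mathbb{R}$ (a ``sign-change capacity''), with $R_0=Q_0=1$. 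Since $f$ is an affine readout of $h_L$, its breakpoints lie among those of $h_L$, so the region count is at most $R_L$, with equality for a suitable network; hence it suffices to show $R_L\le 1+\sum_{i}n_iQ_{i-1}$ with $Q_{i-1}=\prod_{j<i}(n_j+\mathbbm 1_{n_j>2})$ for every network, and to exhibit one attaining it.

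For the upper bound I would first establish the easy region recursion $R_{i+1}\le R_i+n_{i+1}Q_i$: the pre-activation $\gamma=f_i\circ h_i$ has the same breakpoints as $h_i$, and applying $\ReLU$ coordinatewise creates a new breakpoint only where some coordinate $\gamma_c=\ell_c\circ h_i$ crosses zero in the interior of a linear piece; each coordinate contributes at most $Q_i$ such crossings, so at most $n_{i+1}Q_i$ new breakpoints join the $R_i-1$ old ones. The real content is the capacity recursion $Q_{i+1}\le (n_{i+1}+\mathbbm 1_{n_{i+1}>2})\,Q_i$. Granting both, a direct telescoping induction from $R_0=Q_0=1$ gives $R_L=1+\sum_{i=1}^{L}(R_i-R_{i-1})\le 1+\sum_i n_iQ_{i-1}$, which is the desired formula.

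The crux is the capacity recursion, and I expect it to be the main obstacle. An affine readout of $h_{i+1}$ has the form $\psi=\sum_{c=1}^{m}\lambda_c\ReLU(\gamma_c)+\lambda_0$ with $m=n_{i+1}$ and $\gamma_c=\ell_c\circ h_i$, and I would bound its interior zeros by the number of sign changes of its slope, i.e.\ by its local extrema. The plan is to track how the $m$ convex $\ReLU$-kinks and the inherited bends of the $\gamma_c$ may alternate, showing that each neuron contributes at most one extra oscillation per existing sign-change interval of $h_i$ — but that two neurons cannot realise a full alternation. Concretely, for $m=2$ the slope of $\psi$ across the three pieces created by the two kinks cannot return to its original sign: informally the middle slope is forced to equal the sum of the outer ones (convexity of the two-dimensional pre-activation curve $x\mapsto(\gamma_1,\gamma_2)$), and in the remaining cases a constant region $\{\gamma_1\le0,\ \gamma_2\le0\}$ appears, so one potential crossing is always lost. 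This is precisely the degradation of the factor from $m+1$ to $m$ and is the origin of the indicator $\mathbbm 1_{n_{i+1}>2}$; carrying out the case analysis of the slope/sign pattern uniformly over all affine readouts is where the work lies.

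For the lower bound I would exhibit a network turning every inequality above into an equality, using compositions of sawtooth maps. A hidden layer of width $m$ can implement a continuous piecewise affine self-map of an interval with $m$ (or $m+1$) monotone branches built from $m$ $\ReLU$ neurons, and precomposing a map with $k$ interior oscillations by such a branch map multiplies the oscillation count by the branch number. Choosing at each layer a map that introduces exactly $n_i$ new breakpoints and multiplies the capacity by $n_i+\mathbbm 1_{n_i>2}$ — a genuine tent map (factor $2$) for width-$2$ layers and an $(m+1)$-branch zig-zag for wider ones — saturates both recursions, so the constructed network attains $R_L=1+\sum_i n_iQ_{i-1}$. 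Checking by induction on $i$ that these blocks compose without accidental cancellation of breakpoints is routine but needs to be verified.
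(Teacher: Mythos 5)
Your upper-bound plan follows essentially the same route as the paper: your ``sign-change capacity'' $Q_i$ plays exactly the role of the paper's count of monotone regions $\mathcal{R}^m$, your capacity recursion $Q_{i+1}\le (n_{i+1}+\mathbbm{1}_{n_{i+1}>2})Q_i$ with the width-$2$ case analysis (middle slope equals the sum of the outer slopes, or a constant region appears) is precisely the paper's proof that $\mathcal{R}^m(2)\le 2$, and the telescoping $R_{i+1}\le R_i+n_{i+1}Q_i$ matches the paper's ``keep old breakpoints, add at most $n_{i+1}$ per monotone region''. That half of the proposal is sound, modulo the case analysis you defer.

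The genuine gap is in the lower bound, and it occurs exactly in the case this theorem adds over the cited prior work, namely layers with $n_i=2$. A genuine tent map has a \emph{single} interior breakpoint in the oscillation range, so precomposing an inner function whose oscillations span $[0,1]$ with a tent creates only $Q_i$ new breakpoints, not $2Q_i$: the tent saturates your capacity recursion (factor $2$) but not your region recursion. Concretely, for a network with all $n_i=2$ your construction yields about $2^L$ linear regions, whereas the claimed formula gives $1+\sum_{i=1}^L 2\cdot 2^{i-1}=2^{L+1}-1$; so the lower bound fails to meet the upper bound whenever some layer has width $2$, and as written your argument only proves $1+\sum_i \tilde n_i\prod_{j<i}(n_j+\mathbbm{1}_{n_j>2})$ with $\tilde n_i=1$ for $n_i=2$. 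What is needed is a width-$2$ gadget with \emph{two} interior breakpoints but only two monotone branches. The paper uses the plateau function $g_2(x)=1-\max\{0,-3x+1\}-\max\{0,3x-2\}$, which rises, is constant, then falls: each oscillation of the inner function crosses both levels $1/3$ and $2/3$ transversally (two new breakpoints per oscillation), yet the oscillation count only doubles because the middle region is constant. An up--up--down map with slopes $s_1,\,s_1+s_3,\,s_3$ where $s_1>0>s_3$ would also work; note that the constraint $s_2=s_1+s_3$ forced on width-$2$ layers is precisely why a tent with both kinks in the interior of the range is impossible, so some such ``degenerate'' gadget is unavoidable. With the tent replaced by such a gadget, your induction closes and the two bounds meet.
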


Observe that for $n_i \geq 3$ this expression simplifies to the previous expression in \cite{upperBoundRn}. Moreover, we can draw two interesting conclusions from this result, which are stated below. The proofs can be found in \Cref{Section 2}. The first conclusion is that layers with $2$ neurons can form a bottleneck.

\begin{corollary}[Bottleneck Effect]
\label{corollary: R(n) in 1 dimension bottleneck}
Swapping a hidden layer with two neurons to the end of the network can only increase the expressive power. The order of the layers with $\geq 3$ neurons is irrelevant as long as they are before the layers with $2$ neurons.
\end{corollary}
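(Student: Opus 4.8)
The plan is to reduce both statements to the effect of a single adjacent transposition of hidden layers on the formula from \Cref{theorem: R(n) in 1 dimension}. Abbreviating $m_j \coloneqq n_j + \mathbbm{1}_{n_j > 2}$, so that
\begin{align*}
\mathcal{R}(\mathbf{n}) = 1 + \sum_{i=1}^L n_i \prod_{j=1}^{i-1} m_j,
\end{align*}
the crucial observation is that the product $\prod_{j=1}^{i-1} m_j$ depends only on the \emph{set} of layers preceding position $i$, not on their order. Hence when I swap two adjacent hidden layers $k$ and $k+1$, every summand with index $i \neq k, k+1$ is left untouched, because its product still runs over the same multiset of $m_j$. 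This means I only have to track the two affected summands, and any reordering of layers can be analysed one adjacent transposition at a time.

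Writing $Q \coloneqq \prod_{j=1}^{k-1} m_j$, the two relevant summands contribute $Q\,(n_k + n_{k+1} m_k)$ before the swap and $Q\,(n_{k+1} + n_k m_{k+1})$ afterwards, so the change in $\mathcal{R}$ equals
\begin{align*}
Q\bigl[\,n_k m_{k+1} - n_{k+1} m_k + n_{k+1} - n_k\,\bigr].
\end{align*}
First I would evaluate the bracket in the four cases determined by whether each of $n_k, n_{k+1}$ equals $2$ or is $\geq 3$, substituting $m_j = n_j$ when $n_j = 2$ and $m_j = n_j + 1$ otherwise. The expected outcome is that the bracket vanishes both when $n_k, n_{k+1} \geq 3$ and when $n_k = n_{k+1} = 2$, equals $+n_{k+1}$ when $n_k = 2 \leq 3 \leq n_{k+1}$, and equals $-n_k$ in the reverse situation. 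In particular the transposition is \emph{not} symmetric between a $2$-layer and a $\geq 3$-layer: moving the $2$-layer to the right (past a larger layer) strictly increases $\mathcal{R}$, while moving it left strictly decreases it.

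From this table both claims follow. For \Cref{corollary: R(n) in 1 dimension bottleneck}, moving a fixed two-neuron layer towards the end is a composition of adjacent transpositions in each of which that layer (at position $k$) passes to position $k+1$; these fall under the cases $n_k = 2$, where the bracket is $n_{k+1} \geq 0$, so the expressive power can only increase. For the second part, once all $\geq 3$-layers precede all $2$-layers, any reordering of the $\geq 3$-block is a product of adjacent transpositions internal to that block, each of which has vanishing bracket and hence leaves $\mathcal{R}$ unchanged; I should also remark that the summands belonging to the trailing $2$-layers are untouched, since their coefficient $n_i = 2$ is fixed and the preceding product is symmetric. The main work is the bracket case analysis together with keeping the direction of each transposition straight, since the sign of the increment is exactly what distinguishes "bottleneck" from "boost"; the conceptual content that makes all of this possible is simply the commutativity of the product $\prod_{j=1}^{i-1} m_j$.
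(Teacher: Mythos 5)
Your proposal is correct, and it takes a genuinely different (and in some ways finer) route than the paper. The paper performs the move in one shot: it compares the design $\mathbf{n}$ with $\Tilde{\mathbf{n}}$, in which the $2$-neuron layer has been transported directly to the last hidden position, and shows that the sum from \Cref{theorem: R(n) in 1 dimension} can only grow by bounding the ratio of corresponding summands by $1$; for the second claim it separately rewrites the contribution of the leading block of layers with $\geq 3$ neurons in a closed product form, which is visibly symmetric. You instead set up a local exchange calculus: using commutativity of $\prod_{j=1}^{i-1} m_j$, a single adjacent transposition changes $\mathcal{R}$ by exactly $Q\,[\,n_k m_{k+1} - n_{k+1} m_k + n_{k+1} - n_k\,]$, and your four-case evaluation of the bracket (namely $0$ when both layers have $\geq 3$ neurons, $0$ when both have $2$, $+n_{k+1}$ when $n_k = 2 \leq 3 \leq n_{k+1}$, and $-n_k$ in the reverse case) checks out. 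Both claims then follow uniformly: bubbling a $2$-layer rightwards never decreases $\mathcal{R}$ (and strictly increases it each time it passes a $\geq 3$-layer), and transpositions inside a contiguous block of $\geq 3$-layers have zero increment, while all other summands are untouched. What your approach buys is exact increments and strictness information that the paper's ratio bound does not provide; what the paper's buys is brevity for the first claim, at the cost of a separate computation for the second, whereas yours treats both from one table. One small imprecision in your final paragraph: when the $2$-layer passes another $2$-layer the bracket is $0$, not $n_{k+1}$, so the correct summary is that the bracket is either $0$ or $n_{k+1}$, hence nonnegative in both subcases --- your own case table already states this correctly, so nothing breaks.
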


The second conclusion is that we can even determine an optimal network architecture based on this expressive power.

\begin{corollary}[Optimal Network Architecture]
\label{corollary: R(n) in 1 dimension optimal network architecture}
For a fixed number of neurons the network consisting of several hidden layers with $3$ neurons followed by $0$, $1$ or $2$ hidden layers with $2$ neurons maximizes the expressive power $\mathcal{R}$.
\end{corollary}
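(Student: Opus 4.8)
The plan is to first invoke \Cref{corollary: R(n) in 1 dimension bottleneck} to move every hidden layer with two neurons to the end of the network, so that for the purpose of finding the maximum it suffices to consider configurations consisting of $k$ ``big'' layers of sizes $a_1,\dots,a_k \ge 3$ followed by $t$ layers of size $2$. For such a configuration I would show that the formula of \Cref{theorem: R(n) in 1 dimension} collapses to the compact expression
\begin{align*}
\mathcal{R}(\mathbf{n}) = (2^{t+1}-1)\prod_{i=1}^{k}(a_i+1).
\end{align*}
This is obtained by writing $a_i = (a_i+1)-1$ in each summand $a_i\prod_{j<i}(a_j+\mathbbm{1}_{a_j>2})$ so that the contribution of the big layers telescopes to $\prod_{i=1}^k(a_i+1)-1$, while the trailing $2$-layers (each contributing a factor $2$, since $\mathbbm{1}_{2>2}=0$) form the geometric sum $\prod_i(a_i+1)\,(2^{t+1}-2)$. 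Maximising $\mathcal{R}$ over a fixed neuron budget $N=\sum_{i=1}^L n_i$ then becomes the integer problem of maximising $(2^{t+1}-1)\prod_i(a_i+1)$ subject to $\sum_i a_i + 2t = N$ with $a_i\ge 3$ and $t\ge 0$.

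From here I would apply a finite sequence of cost-preserving exchange moves, each of which does not decrease $\mathcal{R}$, to drive an arbitrary configuration to the claimed form. \textbf{Splitting:} a big layer of size $a\ge 6$ is replaced by a $3$-layer and an $(a-3)$-layer, beneficial since $4(a-2)\ge a+1$; iterating leaves only sizes in $\{3,4,5\}$. \textbf{Removing $5$'s:} a size-$5$ layer is swapped for a $3$-layer plus one extra $2$-layer, multiplying $\mathcal{R}$ by $\tfrac{2}{3}\cdot\frac{2^{t+2}-1}{2^{t+1}-1}>1$. \textbf{Removing $4$'s:} when $t\ge 1$, a $4$-layer together with one $2$-layer is replaced by two $3$-layers, giving the factor $\frac{16(2^{t}-1)}{5(2^{t+1}-1)}>1$ (equivalent to $6\cdot 2^{t}>11$); when $t=0$, a $4$-layer is replaced by two $2$-layers, giving the factor $7/5>1$. \textbf{Bounding $t$:} if $t\ge 3$, three $2$-layers are traded for two $3$-layers, multiplying $\mathcal{R}$ by $\frac{16(2^{t-2}-1)}{2^{t+1}-1}>1$ (equivalent to $2\cdot 2^{t}>15$). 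Sequencing these phases as splitting, then elimination of $5$'s, then of $4$'s, then bounding $t$, so that no phase reintroduces a previously eliminated feature, transforms every configuration into one built solely from $3$-layers and at most two trailing $2$-layers. Finally the residue $N\bmod 3$ forces the number of trailing $2$-layers to equal $0$, $2$, or $1$ (so that $3k+2t=N$ has a nonnegative integer solution with $t\in\{0,1,2\}$), which is precisely the asserted architecture.

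The main obstacle is the \emph{non-multiplicativity} of the trailing block: because $t$ layers of size $2$ contribute the combined factor $2^{t+1}-1$ rather than a clean power of $2$, the value of a local move involving a $4$-layer depends on the current $t$ --- indeed, replacing a $4$-layer by two $2$-layers helps when $t\le 1$ but hurts when $t\ge 2$. Overcoming this is why the $4$-elimination must branch on whether $t=0$ or $t\ge 1$, and why the phases must be ordered so that the bounding of $t$ comes last; the technical heart of the proof is verifying that each inequality in this short list of ratios is strictly greater than $1$. A minor additional point is to dispose of the small-$N$ boundary cases, where one must check that $(N-4)/3$ or $(N-2)/3$ remains a nonnegative integer so that the optimal architecture actually exists.
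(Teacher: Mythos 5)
Your proposal is correct and follows essentially the same strategy as the paper: apply \Cref{corollary: R(n) in 1 dimension bottleneck} to push all width-$2$ layers to the end, reduce an arbitrary configuration to the form $3^m2^t$ by neuron-preserving exchange moves, and then optimize over $t$. However, your handling of the width-$4$ layers is genuinely more careful than the paper's, and in fact repairs a gap in it. The paper asserts unconditionally that ``layers with $4$ neurons can be replaced by two layers with $2$ neurons each'', but as you correctly observe this exchange is harmful once $t\geq 2$: for instance $(4,2,2)$ has value $(2^3-1)\cdot 5=35$ while $(2,2,2,2)$ has value $2^5-1=31$, so the paper's move can strictly decrease $\mathcal{R}$ and its chain of value-non-decreasing transformations breaks there. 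Your branching --- trading $4+2\to 3+3$ when $t\geq 1$ (ratio $\tfrac{16(2^t-1)}{5(2^{t+1}-1)}>1$) and $4\to 2+2$ only when $t=0$ --- together with the phase ordering that defers the bounding of $t$ to the end, makes the exchange argument sound. Two further differences are worth noting: your closed form $(2^{t+1}-1)\prod_i(a_i+1)$ for mixed configurations generalizes the paper's computation, which is carried out only for pure $3^m2^k$ networks, and your bounding of $t$ via the move $2+2+2\to 3+3$ (valid since $2\cdot 2^t>15$ for $t\geq 3$) replaces the paper's monotonicity-in-$k$ argument; both routes are valid, but yours keeps every step a legitimate value-non-decreasing exchange under the fixed neuron budget.
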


Unfortunately, we do not have an exact formula for arbitrary $n_0$. However, it is possible to give upper and lower bounds, which are still useful. One upper bound was given in \cite{lowerBoundRn}. This was successively improved in \cite{trajectoryLength}, \cite{worseUpperBoundRn} and \cite{upperBoundRn}. To the best of our knowledge, \cite{upperBoundRn} gave the best estimate so far, which is
\begin{align*}
    \mathcal{R}(\mathbf{n}) \leq \sum_{(j_1,\dots,j_L) \in J} \prod_{l=1}^L {n_l \choose j_l},
\end{align*}
where
\begin{align*}
    J = \{(j_1,\dots,j_L) \in \mathbb{Z}^L : 0 \leq j_l \leq \min\{n_0, n_1-j_1, \dots, n_{l-1}-j_{l-1}, n_l\}\}.
\end{align*}
We will give an even sharper version of this bound.

\begin{theorem}[sharper version of \cite{upperBoundRn}, Theorem 1]
\label{theorem: upper bound R(n)}
For a general neural network we have
\begin{equation*}
    \mathcal{R}(\mathbf{n}) \leq \sum_{(j_1,\dots,j_L) \in J} \prod_{l=1}^L f_{j_l,d_l}(n_l),
\end{equation*}
where
\begin{align*}
    d_l &= \min\{n_0,n_1-j_1,\dots,n_{l-1}-j_{l-1}\} \quad \text{for $l=1,2,\dots,L$,} \\
    J &= \{(j_1,\dots,j_L) \in \mathbb{Z}^L : 0 \leq 2j_l \leq n_l + \min\{n_l,d_l\}, j_L \leq \min\{n_L,d_L\}\}, \\
    f_{j,d}(n) &=
    \begin{cases}
    {n \choose j} &\text{for $j < d$,} \\
    {n-2j+2d-1 \choose d-1} + {n-2j+2d-2 \choose d-1} &\text{for $j \geq d$.}
    \end{cases}
\end{align*}
\end{theorem}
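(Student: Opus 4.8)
The plan is to reduce to the scalar-output case and then count the cells of the iterated hyperplane arrangement layer by layer, carrying along the dimension of the affine subspace on which each layer effectively acts. First I would invoke \Cref{theorem: n_L+1=1} to assume $n_{L+1}=1$, so that $f\colon\mathbb{R}^{n_0}\to\mathbb{R}$ and its linear regions are exactly the maximal open sets on which every ReLU neuron has a fixed activation pattern. Writing $F_l$ for the function computed by the first $l$ layers, on any region $R$ of $F_{l-1}$ the restriction $F_{l-1}|_R$ is affine with image contained in an affine subspace of dimension at most $d_l=\min\{n_0,n_1-j_1,\dots,n_{l-1}-j_{l-1}\}$, where $j_i$ is the number of neurons that are inactive (output $0$) on $R$ in layer $i$: each inactive neuron forces one output coordinate to be constant, so the rank can drop by at most the inactive count, giving the recursion $d_{l+1}=\min\{d_l,\,n_l-j_l\}$.

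The counting is then organised as an induction over layers along activation profiles $(j_1,\dots,j_L)$. The statement I would prove by induction on $l$ is that the number of regions of $F_l$ carrying a fixed profile $(j_1,\dots,j_l)$ is at most $\prod_{i=1}^l f_{j_i,d_i}(n_i)$, after which summing over the admissible set $J$ yields the theorem. Since $d_{l+1}$ depends only on the profile, the inductive step reduces to a single-layer statement: on a region whose image spans a $d$-dimensional affine space, the $n$ pre-activations of the next ReLU layer cut out an arrangement of $n$ hyperplanes that is effectively $d$-dimensional, and I must bound the number of resulting sub-regions grouped by their inactive count $j$, each incoming region contributing at most $f_{j,d}(n)$ sub-regions with that count.

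The heart of the argument, and the source of the improvement over \cite{upperBoundRn}, is this sharp single-layer bound: the number of cells of an arrangement of $n$ hyperplanes in $\mathbb{R}^d$ lying on the negative side of exactly $j$ of them is controlled by $f_{j,d}(n)$, and the achievable range of $j$ is the new constraint $2j\le n+\min\{n,d\}$ (with the tighter $j_L\le\min\{n_L,d_L\}$ at the scalar output layer, where no further dimension must be propagated). For $j<d$ this recovers the coarse estimate $\binom{n}{j}$, whereas for $j\ge d$ a deeper cell is forced to be bounded in several directions and its count is governed by an arrangement restricted to a facet, producing the folded expression $\binom{n-2j+2d-1}{d-1}+\binom{n-2j+2d-2}{d-1}$; I would establish this by a deletion--restriction (Zaslavsky-type) induction on the arrangement while tracking the negative-count statistic. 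A small but essential point is a reattribution step: a genuine sub-region whose inactive count $j'$ exceeds the admissible cap is charged to a bucket $j\le j'$, which only increases the claimed output dimension, since $\min\{d,n-j\}\ge\min\{d,n-j'\}$, and so keeps every downstream bound valid while letting the larger cap attribute more dimension reduction to deep cells than the old estimate did.

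The main obstacle I expect is precisely this refined arrangement count together with its interaction with the dimension recursion: one must show that $f_{j,d}(n)$ simultaneously bounds all bucket sizes, that every cell can be reattributed to an admissible $j$ without double counting, and that the multiplicative assembly along profiles remains an upper bound even though different incoming regions realise different worst-case arrangements. Verifying the boundary behaviour of the binomial coefficients (arguments that are negative or out of range vanishing, and the identity showing $f_{d,d}(n)\le\binom{n}{d}$, so that the new bound never exceeds the old one) I expect to be routine once the combinatorial lemma and the structure of $J$ are in place.
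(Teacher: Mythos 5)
Your overall architecture mirrors the paper's: reduce to scalar output, track the dimension through the recursion $d_{l+1}=\min\{d_l,n_l-j_l\}$, bound the effect of a single layer by a refined Zaslavsky-type count (the paper's \Cref{subspace Zaslavsky} and \Cref{activation patterns 1 layer}), and assemble the bound multiplicatively over activation profiles. However, the combinatorial lemma you place at the heart of the argument is false as stated, and this is a genuine gap. You claim that each incoming region contributes at most $f_{j,d}(n)$ sub-regions whose inactive count is \emph{exactly} $j$, and that inactive counts beyond the cap $2j\le n+\min\{n,d\}$ cannot occur. Both claims fail already for $d=1$: take the three neurons $\max\{0,-x\}$, $\max\{0,x-1\}$, $\max\{0,x-2\}$. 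The regions $(-\infty,0)$ and $(1,2)$ both have exactly $j=2$ inactive neurons, while $f_{2,1}(3)=\binom{0}{0}+\binom{-1}{0}=1$, so the exact-count bound fails even though $2j=4\le n+\min\{n,d\}=4$ lies inside your admissible range. Likewise, with the two neurons $\max\{0,-x\}$ and $\max\{0,x-1\}$, the region $(0,1)$ has $j=2$ inactive neurons although $2j=4>3=n+\min\{n,d\}$, so above-cap counts do occur; your reattribution step, which you invoke only for such above-cap regions, cannot repair the failure inside the admissible range. Consequently the inductive invariant you propose (at most $\prod_{i\le l}f_{j_i,d_i}(n_i)$ regions with exact profile $(j_1,\dots,j_l)$) is not provable.

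What is true, and what the paper actually proves in \Cref{activation patterns 1 layer}, is the \emph{cumulative} version: the number of sub-regions with at most $m$ inactive neurons (activation set of size $\ge n-m$) is bounded by $\sum_{j=0}^m f_{j,d}(n)$. The paper establishes this by induction on $d$ (the base case $d=1$ being the points-on-a-line argument) with an inner induction on $n$ that adds one hyperplane at a time, which is close in spirit to your deletion--restriction plan. To pass from the cumulative bound to the product form one then needs the observation that the downstream region count $R(l+1,\min\{n_l-j,d\})$ is monotone decreasing in the inactive count $j$, so that an Abel-summation/majorization step (equivalently, your reattribution applied to \emph{all} regions, not only the above-cap ones) yields $\sum_j M_j\,R(l+1,\min\{n_l-j,d\})\le\sum_j f_{j,d}(n_l)\,R(l+1,\min\{n_l-j,d\})$ for the true bucket sizes $M_j$. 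Your proposal contains the raw ingredients for this repair — the monotonicity remark inside your reattribution paragraph is exactly the needed fact — but as written the proof rests on a per-count lemma that is false.
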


It is not obvious that this is indeed better than the previous bound. A proof can be found in the end of \Cref{Section 2}. As already mentioned, it is also possible to give a lower bound for the expressive power of a neural network by constructing specific coefficients for the affine functions that lead to a lot of linear regions. Different lower bounds were given in \cite{lowerBoundRn}, \cite{differentLowerBoundRn} and \cite{upperBoundRn}. \\

Of course, the expressive power $\mathcal{R}(\mathbf{n})$ does not tell the whole story about what functions are contained in $\mathcal{F}_{\mathbf{n}}$. Not every function with $\mathcal{R}(\mathbf{n})$ linear regions can be calculated by a neural network with this design. A counterexample is the function
\begin{align*}
    h(x) = 1-\max\{0,1-3x\}-\max\{0,3x-1\}+\max\{0,6x-4\}.
\end{align*}
As shown in \Cref{Section 3} it cannot be calculated by any network of width $2$. But by \Cref{theorem: R(n) in 1 dimension} such a network can have enough linear regions. While not every function with $\mathcal{R}(\mathbf{n})$ linear regions can be calculated, we will prove later that there is a small enough $k \leq \mathcal{R}(\mathbf{n})$ such that this holds. Therefore, we propose the definition of another expressive power $\mathcal{E}(\mathbf{n})$. \\

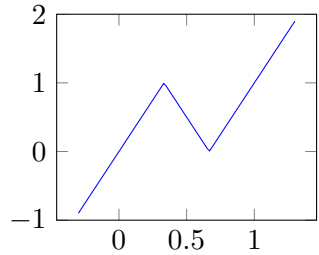
\begin{figure}[h]
    \centering
    \begin{tikzpicture}
    \begin{axis}[no markers, ymin=-1, ymax=2]
    \addplot+[domain=-0.3:1.3,samples=100,color=blue]{1-max(1-3*x,0)-max(3*x-1,0)+max(6*x-4,0)};
    \end{axis}
    \end{tikzpicture}
    \caption{Graph of $h$}
\end{figure}

\begin{definition}
Let $\mathcal{E}(\mathbf{n})$ be the maximal number $k$ such that any piecewise affine function with at most $k$ linear regions is in $\mathcal{F}_{\mathbf{n}}$.
\end{definition}

As with the other expressive power we only consider the case $n_{L+1} = 1$. One can easily conclude lower bounds for the general case from this by just using $n_{L+1}$ copies of the network with $n_{L+1} = 1$. Again, we start by giving some results in the case of $1$-dimensional input. Our first result deals with networks consisting of layers that have width $3$.

\begin{theorem}
\label{theorem: E(n) in 1 dimension with width 3}
For the network consisting of $L$ layers of $3$ neurons and $n_0 = 1$ we have $\mathcal{E}(\mathbf{n}) \geq L+2$.
\end{theorem}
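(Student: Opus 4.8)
The plan is to represent an arbitrary piecewise affine $g\colon\mathbb{R}\to\mathbb{R}$ with at most $L+2$ linear regions, equivalently at most $L+1$ breakpoints $t_1<\dots<t_{L+1}$, and then build a $(1,3,\dots,3,1)$ network computing it exactly by processing the breakpoints from left to right. Writing the slope jumps as $\gamma_i$, every such $g$ has the form
\begin{equation*}
 g(x) = c + s_0 x + \sum_{i=1}^{L+1} \gamma_i \ReLU(x-t_i),
\end{equation*}
and, padding with dummy kinks $\gamma_i=0$ at distinct spare thresholds, I may assume there are exactly $L+1$ breakpoints. The core idea is a register--accumulator invariant: after the $k$-th hidden layer the state vector in $\mathbb{R}^3_{\ge 0}$ encodes (i) an accumulator from which the partial function $g^{(k)}$ — namely $g$ with only the first few kinks folded in — is recovered by an affine functional, and (ii) a register $\ReLU(x-\tau)$ pointing at the next threshold to be processed.

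The engine of the construction is that a single register neuron suffices to march through all thresholds, because for $\tau'>\tau$
\begin{equation*}
 \ReLU(x-\tau') = \ReLU\!\bigl(\ReLU(x-\tau)-(\tau'-\tau)\bigr),
\end{equation*}
so one ReLU moves the register from $\tau$ to the next threshold $\tau'$. Since the accumulator $g^{(k)}(x)$ is sign-indefinite and unbounded in both tails, I carry it as a sign split $\ReLU(g^{(k)})$ and $\ReLU(-g^{(k)})$, costing two neurons; together with the one register neuron this exactly fills the width-$3$ budget. A single layer then performs all three updates at once: it advances the register, and — crucially using that the \emph{old} register value $\ReLU(x-\tau)$ is available to the affine map before the next activation — it folds the kink $\gamma\,\ReLU(x-\tau)$ into the accumulator and re-splits it. Thus each interior layer folds in exactly one new breakpoint while preserving the invariant.

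Counted naively this folds only $L$ kinks, so the delicate point is to harvest two extra breakpoints at the boundaries to reach $L+1$. At the input, the first affine map $\mathbb{R}\to\mathbb{R}^3$ can create the three hinges $\ReLU(t_1-x)$, $\ReLU(x-t_1)$ and $\ReLU(x-t_2)$; the first two already encode the baseline together with the kink at $t_1$, while the third simultaneously serves as the advanced register, so the first layer installs a breakpoint essentially for free. At the output, the affine map $\mathbb{R}^3\to\mathbb{R}$ can add $\gamma_{L+1}$ times the final register to the recovered accumulator, folding in the last kink without consuming a hidden layer. The main obstacle is precisely this bookkeeping under the width-$3$ constraint: verifying that advancing the register, carrying a sign-indefinite accumulator, and folding a new kink can all be realised by one affine map followed by a component-wise ReLU, and that the special roles of the input and output layers yield the sharp constant $L+2$ rather than $L+1$ or $L$. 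The base case $L=1$ — any function with at most three regions lies in $\mathcal{F}_{(1,3,1)}$ — is the direct three-neuron realisation (one left-pointing and two right-pointing hinges), and the invariant propagates the claim to all $L$.
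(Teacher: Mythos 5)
Your construction is correct, and it takes a genuinely different route from the paper's proof. The paper argues by induction on depth: it peels off one linear region at a time through a composition $f = g \circ h$, where $h$ has at most three linear regions (absorbed by one extra layer) and $g$ has one region fewer; making this work requires a delicate case analysis on the heights $y_i$ of the breakpoints (monotone triples of heights versus a four-term interleaving pattern, plus a separate argument that one of the two cases must always occur). You instead write $f(x) = c + s_0 x + \sum_{i=1}^{L+1}\gamma_i\ReLU(x-t_i)$ and compute it by a direct register--accumulator scheme: two neurons carry the sign-split accumulator $\ReLU(g^{(k)})$, $\ReLU(-g^{(k)})$, one neuron carries the one-sided register $\ReLU(x-t_{k+1})$, and each hidden layer folds in one kink via $g^{(k+1)} = g^{(k)} + \gamma_{k+1}\ReLU(x-t_{k+1})$ while advancing the register via $\ReLU(x-t_{k+2}) = \ReLU\bigl(\ReLU(x-t_{k+1}) - (t_{k+2}-t_{k+1})\bigr)$; this is sound because the unprocessed kinks only influence $f$ on the set where the register faithfully equals $x - t_{k+1}$. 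Your boundary bookkeeping (hinge pair at $t_1$ in the first layer, last kink absorbed into the output affine map) is also right and yields $L+1$ kinks, hence $L+2$ regions. It is worth noting that your key idea is a sharpening of the paper's own proof of its width-$\geq 4$ result (\Cref{theorem: E(n) in 1 dimension}), which spends two neurons storing the full input $x$ as $\ReLU(x)+\ReLU(-x)$; replacing that full copy of $x$ by a single one-sided register is exactly what brings the width down to $3$ and eliminates the need for any case analysis. The trade-offs: your argument is more elementary, uniform over all target functions, and fully explicit (the weights are determined by the slopes and thresholds, which would also make coefficient bounds easy to extract); the paper's compositional argument is more involved but exhibits the structural fact that each additional layer corresponds to precomposition with a simple three-region reparametrisation, which is the mechanism its other one-dimensional results are built on.
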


For wider networks we can obtain a different result, which is easier to prove. Roughly speaking, this is because wider networks have enough space to store intermediate results and perform calculations in different neurons.

\begin{theorem}
\label{theorem: E(n) in 1 dimension}
For any network with $n_0 = 1$ and $n_l \geq 4$ we have
\begin{equation*}
    \mathcal{E}(\mathbf{n}) \geq 2 + \sum_{l=1}^L (n_l-4).
\end{equation*}
\end{theorem}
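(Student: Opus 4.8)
The plan is to prove the bound by an explicit construction. Given an arbitrary piecewise affine target $g:\mathbb{R}\to\mathbb{R}$ with at most $k := 2 + \sum_{l=1}^L(n_l-4)$ linear regions, I would build a ReLU network of design $\mathbf{n}$ that computes it exactly. Since a function with fewer regions is obtained from one with more by setting coefficients to zero, it suffices to treat the case of exactly $k$ regions, i.e.\ breakpoints $t_1 < \cdots < t_{k-1}$. Any such $g$ has the canonical form
\begin{equation*}
  g(x) = a x + b + \sum_{i=1}^{k-1} c_i \max\{0, x - t_i\},
\end{equation*}
where $a,b$ describe the leftmost affine piece and each $c_i$ is the jump in slope at $t_i$. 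Thus the task reduces to realising these $k-1$ ReLU kinks together with the affine term using the neuron budget $n_1,\dots,n_L$.

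The core idea, matching the heuristic stated just before the theorem, is to spend part of the width on carrying intermediate data forward. I would maintain two registers across the layers: an \emph{input register} holding $x$ and an \emph{accumulator register} holding the partial sum built so far. Because both quantities take values of either sign on all of $\mathbb{R}$, each register is stored through a layer as a pair of neurons $\max\{0,u\}$ and $\max\{0,-u\}$, from which the next affine map recovers $u = \max\{0,u\} - \max\{0,-u\}$; this costs $4$ neurons per layer. The remaining $n_l - 4$ neurons at layer $l$ compute fresh kink terms $\max\{0, x - t_i\}$, reading $x$ off the input register, and the following affine map folds $c_i$ times each of these into the accumulator. Summing the per-layer kink capacities yields the breakpoints: the first layer needs only the input register, since the accumulator is still affine there and recoverable from $x$, and the last layer's kinks are added directly inside the output map $f_L$; tracking these two boundary layers is what supplies the additive constant and gives at least $1 + \sum_{l=1}^L(n_l-4)$ breakpoints, hence at least $2 + \sum_{l=1}^L(n_l-4)$ regions.

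The main work, and the only delicate point, is the bookkeeping of this pipeline. I would write down each affine map $f_l$ explicitly and track the one-layer delay between the creation of a kink neuron and its incorporation into the accumulator, checking that at every layer the four register neurons plus the fresh kink neurons fit within the width $n_l$, so that the realised number of kinks reaches the required count. The second thing to verify carefully is that the construction is valid on the whole line rather than merely on a compact set: the identity $u = \max\{0,u\} - \max\{0,-u\}$ holds for every real $u$, so both registers are reconstructed exactly in both tails and the network computes $g$ globally. Continuity is automatic because every ReLU network is continuous, so once the algebra of the maps $f_l$ is in place the theorem follows.
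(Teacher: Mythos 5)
Your proposal is correct and matches the paper's own proof in all essentials: the paper likewise writes the target as a sum of ReLU kink terms (built left to right from the slope differences), spends four neurons per layer on the two sign-split registers for $x$ and the running sum, uses the remaining $n_l-4$ neurons per layer for new breakpoints, and harvests the extra $+2$ from the fact that the first layer does not yet need an accumulator register. Your bookkeeping is in fact slightly conservative (the freed first-layer neurons give two extra kinks, not one), but since the theorem only asserts a lower bound this is harmless.
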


Naturally, we would like to have similar results in the case of arbitrary input dimension. One step in this direction is given by the following theorem.

\begin{theorem}
\label{theorem: lower bound E(n)}
Any compactly supported piecewise affine function with $k$ linear regions can be calculated by a neural network with $O(k^{(n_0+1)^2})$ layers of width $2n_0+6$.
\end{theorem}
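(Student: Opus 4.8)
The plan is to write $f$ (we may assume $n_{L+1}=1$, since the general case follows by placing $n_{L+1}$ copies side by side, which does not change the width allowance) as a sum
\[
f = \sum_{m=1}^{N} g_m, \qquad N = O\!\left(k^{(n_0+1)^2}\right),
\]
of elementary compactly supported piecewise affine functions $g_m$, each computable by a network of width $2n_0+6$ and depth depending only on $n_0$, and then to splice these sub-networks into one deep narrow network. The width $2n_0+6$ is allocated as follows: $2n_0$ neurons transport a signed copy $(\ReLU(x_i),\ReLU(-x_i))_{i=1}^{n_0}$ of the input through every layer, so that $x$ can be re-synthesised as $\ReLU(x_i)-\ReLU(-x_i)$ whenever a block needs it; two neurons carry the running partial sum $\sum_{m\le M} g_m(x)$, again as a signed pair so that it survives the ReLU nonlinearity; and the remaining four neurons are scratch space in which the next block $g_{M+1}$ is evaluated and added to the accumulator. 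Since adding a block only reads the carried input and writes to the accumulator, the blocks compose sequentially and the total depth is $N$ times the constant per-block depth, which is $O(k^{(n_0+1)^2})$.

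First I would refine the linear regions into convex cells. Each facet separating two adjacent linear regions lies on the hyperplane where the two neighbouring affine pieces coincide (they genuinely differ, else maximality would merge the regions), so there are at most $\binom{k}{2}=O(k^2)$ such hyperplanes. Their arrangement subdivides $\mathbb{R}^{n_0}$ into convex polyhedral cells on each of which $f$ is affine, and $f\equiv 0$ on every unbounded cell, since a cell on which $f$ is affine and which meets the complement of the compact support must vanish identically. Triangulating the bounded cells yields a finite simplicial complex on whose simplices $f$ agrees with affine maps $\ell_m$. I would then take each block $g_m$ to encode the transition between adjacent pieces across a shared facet, so that the partial sums telescope to $f$; concretely $g_m$ is built from the $n_0+1$ barycentric affine functions of a simplex via a clipped running minimum (a $\ReLU$ of a $\min$ of $n_0+1$ affine functions), which is continuous, localised near one simplex, and computable in the four scratch neurons at depth $O(n_0)$. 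A standard count for arrangements of $H=O(k^2)$ hyperplanes in $\mathbb{R}^{n_0}$ bounds the number of simplices by a polynomial in $k$, and tracking its degree is what produces the exponent $(n_0+1)^2$.

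The network bookkeeping is routine; the genuine difficulty will be the decomposition. Unlike the one-dimensional situation underlying \Cref{theorem: E(n) in 1 dimension}, where $f$ is literally a sum of $O(k)$ hinges $c\,\ReLU(\langle a,x\rangle-b)$, in higher dimensions a single ReLU correction across a facet-hyperplane overcorrects \emph{outside} the facet, so the pieces cannot be localised by one hinge apiece. Arranging each $g_m$ to be simultaneously continuous, localised to a single simplex, and summing \emph{exactly} to $f$ is the crux, and it is precisely this that forces the barycentric min-max blocks and the triangulation rather than a naive hinge sum. The second delicate point is the combinatorial count leading to the specific degree $(n_0+1)^2$; the cleanest route is an induction on $n_0$ in which each dimension-reduction step multiplies the number of pieces by a polynomial factor in $k$, the exponents summing to $(n_0+1)^2$, with the base case $n_0=1$ being exactly the hinge decomposition already available from the one-dimensional theorems. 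I would finish by checking that this count, together with the constant per-block depth and the width accounting $2n_0+2+4=2n_0+6$, stays within the stated bound.
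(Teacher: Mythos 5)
Your architecture and width bookkeeping are exactly right and match the paper's: $2n_0$ neurons carrying a signed copy of the input, $2$ carrying the running partial sum, and $4$ scratch neurons, with blocks spliced sequentially. But the proof has a genuine gap precisely where you flag it yourself: you never construct blocks $g_m$ that are simultaneously continuous, computable in bounded depth by the four scratch neurons, and summing \emph{exactly} to $f$; you only assert that "barycentric min-max blocks" should be forced. A per-facet or per-simplex telescoping of clipped mins does not obviously reconstitute $f$ --- making it do so is the entire content of the theorem. The paper resolves this by organizing the decomposition around \emph{vertices} rather than facet transitions: by \Cref{barycentric decomposition}, the restriction of $f$ to each simplex splits as a sum of $n_0+1$ affine pieces, one per vertex, each vanishing on the opposite face and depending only on $f$ restricted to the faces containing that vertex; regrouping the pieces by vertex gives $f = \sum_p f_p$, where each $f_p$ is a continuous cone supported on the star of $p$ with apex $(p,f(p))$. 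Each cone is then computed via \Cref{lego lemma}: after translating $p$ to the origin, $f_p = \max\{f_p(0) + \sum_e f_{p,e},\,0\}$, where $e$ runs over the edges at $p$ and each $f_{p,e}$ is a scaled $\max\{0,\min_i a_i\cdot x\}$, computable two neurons per layer by iterating $\min\{a,b\} = \tfrac12\bigl(a+b-\max\{a-b,0\}-\max\{b-a,0\}\bigr)$. Without this construction (or an equivalent one) your argument does not go through.

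The combinatorial count is also left as a sketch, and your route needs repair. Triangulating the cells of the arrangement of $\binom{k}{2}$ coincidence hyperplanes by picking $(n_0+1)$-subsets of its at most $\binom{\binom{k}{2}}{n_0} = O(k^{2n_0})$ vertices bounds the number of simplices only by $O(k^{2n_0(n_0+1)})$, which exceeds $k^{(n_0+1)^2}$ for $n_0 \geq 2$; you would need a sharper triangulation bound, and your proposed "induction on $n_0$ with exponents summing to $(n_0+1)^2$" is not an argument. The paper's count is direct: extending the $k$ affine pieces of the graph to hyperplanes in $\mathbb{R}^{n_0+1}$, every vertex of a linear region is the projection of an intersection point of $n_0+1$ of them, so $V \leq \binom{k}{n_0+1}$, then $S \leq \binom{V}{n_0+1}$, and since each simplex is counted once per vertex, the depth is $\sum_p (1 + n_0 S_p) = V + n_0(n_0+1)S = O(k^{(n_0+1)^2})$.
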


A few easy yet very interesting consequences of this result are universal approximation theorems. These theorems ensure that neural networks can approximate any function of a certain class at least in theory. Usually, they are proved in a different way, for example by approximating polynomials with a register model \cite{universalApproximation}. Instead of taking the detour over polynomials we can directly use piecewise affine functions.

\begin{corollary}[follows from \cite{universalApproximation}, Theorem 3.2 and Theorem 4.16]
\label{corollary: lower bound E(n) universal approximation}
ReLU networks with width $2n_0+6$ and arbitrary depth can approximate any function in $L^p(\mathbb{R}^{n_0})$ with $p \in [1,\infty)$ or $C(K)$, where $K \subset \mathbb{R}^{n_0}$ is compact.
\end{corollary}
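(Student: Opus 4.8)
The plan is to combine the exact representation result of \Cref{theorem: lower bound E(n)} with a density argument: since every compactly supported piecewise affine function is computed \emph{exactly} by a ReLU network of width $2n_0+6$ (at the cost of more depth), it suffices to show that such functions are dense in each target space. The required depth grows with the number of linear regions of the approximant, and hence with $1/\varepsilon$, but this is harmless because we permit arbitrary depth while holding the width fixed at $2n_0+6$. So for a target $g$ and $\varepsilon > 0$ I would produce a compactly supported piecewise affine $f$ that is $\varepsilon$-close to $g$ in the relevant norm, and then invoke \Cref{theorem: lower bound E(n)} to realise $f$ on a network of the prescribed width.

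For $C(K)$ I would fix a closed cube $Q \supseteq K$ together with a simplicial subdivision of $Q$ (for instance the Kuhn triangulation) of mesh size $\delta$. Let $f_\delta$ be the continuous function that agrees with $g$ on the vertices of the subdivision and is affine on each simplex; then $f_\delta$ is piecewise affine, and uniform continuity of $g$ on $Q$ yields $\|g - f_\delta\|_{\infty, K} \to 0$ as $\delta \to 0$. A piecewise affine cutoff supported on a slightly larger cube turns $f_\delta$ into a compactly supported piecewise affine function without altering its values on $K$, after which \Cref{theorem: lower bound E(n)} produces an exact network.

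For $L^p(\mathbb{R}^{n_0})$ with $p \in [1,\infty)$ I would first use the density of $C_c(\mathbb{R}^{n_0})$ in $L^p$ to replace $g$ by a continuous $\tilde g$ supported in a fixed cube $Q$. Applying the previous construction to $\tilde g$ gives a compactly supported piecewise affine $f$, supported in a fixed compact set $Q$ of finite measure, with $\|\tilde g - f\|_\infty$ as small as desired; since $\|\tilde g - f\|_{L^p} \le |Q|^{1/p}\,\|\tilde g - f\|_\infty$, this controls the $L^p$ error as well, and \Cref{theorem: lower bound E(n)} again supplies an exact network. These two density statements are precisely what \cite{universalApproximation}, Theorem 3.2 and Theorem 4.16 provide, so I would cite those rather than reprove the standard approximation facts.

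The only genuinely delicate point is the gluing step: one must extend the local interpolant to a globally defined, continuous, compactly supported piecewise affine function so that \Cref{theorem: lower bound E(n)} applies verbatim, while ensuring that inserting the cutoff neither spoils the approximation on $K$ (in the $C(K)$ case) nor inflates the $L^p$ error (in the $L^p$ case). Once the support is fixed independently of $\varepsilon$, everything collapses to the uniform estimate above, and the growth in the number of linear regions feeds only into the depth, never the width — which is exactly what the statement claims.
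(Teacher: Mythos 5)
Your proposal is correct and follows essentially the same route as the paper: the paper's proof is a one-liner invoking \Cref{theorem: lower bound E(n)} together with the density of (compactly supported) piecewise affine functions in $L^p(\mathbb{R}^{n_0})$ and $C(K)$, which is exactly your argument. The only difference is that you spell out the density step (simplicial interpolation plus cutoff) that the paper simply cites as known.
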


The quantitative nature of \Cref{theorem: lower bound E(n)} also allows us to explicitly state the error in terms of the width and depth of the network. We will call this the approximation rate. Optimal approximation rates for the $L^p$-norms with $p \in [1,\infty)$ are already known \cite{optimalApproximationRate}. For any Lipschitz continuous function $f$ there are a ReLU-network of depth $L$ and width $N$ calculating a function $g$ such that
\begin{align*}
    \|f-g\|_{L^p([0,1]^{n_0})} = O(\sqrt{n_0} \Lip(f) (N^2 L^2 \ln(N))^{-\frac{1}{n_0}}).
\end{align*}
But the technique used in \cite{optimalApproximationRate} does not easily carry over to the $W^{1,1}$-norm because they used piecewise constant functions for this approximation. Nevertheless, it is possible to get a similar result for the Sobolev norm using the expressive power $\mathcal{E}(\mathbf{n})$.

\begin{theorem}
\label{theorem: approximation rate in sobolev norm}
For any $f \in W^{1,1}_0([0,1]^{n_0})$ with Lipschitz continuous derivative there is a ReLU network with width $2n_0+6$ and depth $L$ calculating a function $g$ such that
\begin{equation*}
    \|f-g\|_{W^{1,1}} \leq C_{n_0} \Lip(Df) L^{-\frac{1}{n_0}}.
\end{equation*}
\end{theorem}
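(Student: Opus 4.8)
The plan is to prove the estimate by the classical \emph{approximate-then-realise} strategy: first approximate $f$ by a piecewise affine function $g$ whose $W^{1,1}$-distance to $f$ is controlled by a mesh parameter $h$, and then invoke \Cref{theorem: lower bound E(n)} to realise $g$ exactly by a ReLU network of width $2n_0+6$ whose depth is controlled by the number of linear regions of $g$. Optimising the trade-off between accuracy and complexity then yields the rate in terms of $L$.

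For the approximation step I would fix a shape-regular simplicial mesh of $[0,1]^{n_0}$ of width $h$ (for instance a Kuhn--Freudenthal triangulation of the uniform grid) and let $g = I_h f$ be the continuous piecewise affine interpolant of $f$ at the vertices. Since $Df$ is Lipschitz, $f$ lies in $W^{2,\infty}$ with second derivatives bounded by $\Lip(Df)$, so the standard first-order finite element interpolation estimates apply on each simplex $T$ of diameter at most $h$:
\begin{align*}
\|\nabla(f-I_h f)\|_{L^\infty(T)} \le C_{n_0}\, h \,\Lip(Df), \qquad \|f-I_h f\|_{L^\infty(T)} \le C_{n_0}\, h^2 \,\Lip(Df).
\end{align*}
Because $[0,1]^{n_0}$ has unit measure, summing the $L^1$-contributions over all simplices gives
\begin{align*}
\|f-I_h f\|_{W^{1,1}} = \|f-I_h f\|_{L^1}+\|\nabla(f-I_h f)\|_{L^1} \le C_{n_0}\,\Lip(Df)\,h .
\end{align*}
Moreover, as $f\in W^{1,1}_0$ vanishes on $\partial[0,1]^{n_0}$, I would arrange the interpolation so that $g$ vanishes on the boundary as well; extending $g$ by zero then produces a compactly supported piecewise affine function, which is exactly the hypothesis required by \Cref{theorem: lower bound E(n)}.

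Next I would bound the number of linear regions of $g$. Since $g$ is affine on each simplex, it has at most $k = C_{n_0}\, h^{-n_0}$ linear regions, the constant absorbing the number of simplices per cube. Applying \Cref{theorem: lower bound E(n)} realises $g$ by a width-$(2n_0+6)$ network of depth $L = O\!\left(k^{(n_0+1)^2}\right)$. Substituting $h = C_{n_0}\, k^{-1/n_0}$ into the error bound gives $\|f-g\|_{W^{1,1}} \le C_{n_0}\,\Lip(Df)\,k^{-1/n_0}$, so it only remains to re-express this in terms of the depth $L$.

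The main obstacle is precisely this final conversion from the region count $k$ to the depth $L$. The error decays like $k^{-1/n_0}$ in the number of linear regions, so reaching the stated rate $L^{-1/n_0}$ requires that $k$ regions be realisable with depth essentially \emph{linear} in $k$; the generic polynomial cost $L=O(k^{(n_0+1)^2})$ supplied by \Cref{theorem: lower bound E(n)} only yields the weaker exponent $L^{-1/(n_0(n_0+1)^2)}$. Closing this gap is the delicate point: one must sharpen the depth cost for a \emph{gridded} interpolant by exploiting its regular tensor-product structure rather than treating it as a generic $k$-region function, so that the depth grows like the number of grid cells instead of a power of the region count. Once such a depth-linear realisation is available, the remaining pieces---the $W^{1,1}$ interpolation estimate and the region count---are routine, and the rate $C_{n_0}\Lip(Df)L^{-1/n_0}$ follows by setting $h \sim L^{-1/n_0}$.
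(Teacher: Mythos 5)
Your overall strategy (interpolate on a Kuhn--Freudenthal mesh, then realise the interpolant by a network via \Cref{theorem: lower bound E(n)}) is the same as the paper's, and your interpolation step is fine: since $f$ has Lipschitz derivative, the nodal interpolant satisfies $\|f-I_hf\|_{W^{1,1}}\le C_{n_0}\Lip(Df)\,h$, which is a legitimate (indeed more standard) substitute for the paper's translation-averaging argument via \Cref{lemma: affine interpolation}. But the proposal is not a complete proof: the step you yourself flag as ``the delicate point'' is exactly the step you never carry out, so what you have is a reduction of the theorem to an unproven claim --- that the gridded interpolant admits a realisation whose depth is linear in the number of cells. Your suggested route to that claim, exploiting the ``regular tensor-product structure'' of the grid to sharpen the depth cost, is left entirely unsubstantiated, and it is also not how the gap actually gets closed.

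The resolution requires no new construction at all: one must look inside the \emph{proof} of \Cref{theorem: lower bound E(n)} rather than at its statement. The exponent $(n_0+1)^2$ in the statement arises only from converting a generic count of $k$ linear regions into a simplex count, via $V\le\binom{k}{n_0+1}$ and $S\le\binom{V}{n_0+1}$. The network actually constructed there computes a compactly supported piecewise affine function, \emph{given} a triangulation with $V$ vertices and $S$ simplices, using $\sum_p(1+n_0S_p)=V+n_0(n_0+1)S$ layers of width $2n_0+6$, i.e.\ $O(n_0(n_0+1))$ layers per simplex --- linear in $S$. In your setting the triangulation is explicit, with $S=O\bigl(n_0!\,(\sqrt{n_0}/h)^{n_0}\bigr)$ simplices, so the depth is $L=O(C_{n_0}h^{-n_0})$, and setting $h\sim L^{-1/n_0}$ gives $\|f-g\|_{W^{1,1}}\le C_{n_0}\Lip(Df)\,L^{-1/n_0}$ at once (this is precisely the step where the paper writes that the network needs $O(n_0(n_0+1))$ layers per simplex). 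So your diagnosis of the obstacle is correct, but closing it needs only the per-simplex cost already established in that proof, which your proposal stops short of invoking.
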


Note that we do not know if this is the optimal approximation rate. Furthermore, one can even prove a more general version of \Cref{theorem: approximation rate in sobolev norm}, which takes wider networks into account. This would require a generalized version of \Cref{theorem: lower bound E(n)}. In the end one could obtain an approximation rate of 
\begin{align*}
    C_{n_0} \Lip(Df) (NL)^{-\frac{1}{n_0}},
\end{align*}
where $N$ is the width of the network. We chose not to do that because the proof of \Cref{theorem: lower bound E(n)} would become more tedious. \\

At this point we have two different expressive powers that have quite compelling consequences on their own. The crucial observation is that they are somewhat dual. This is the reason why it is genuinely promising to combine them. One striking consequence of this combination are width vs depth results. For example one might ask the simple question whether there is a deep network that cannot be approximated by any shallow network unless it has exponential width. This was answered in \cite{widthInefficiency}. They showed that there is a function $g$ calculated by a ReLU-network with $2k^3+8$ layers, $3k^3+12$ neurons in total and coefficients $\leq 4$ such that
\begin{align*}
    \|f-g\|_{L^1([0,1]^{n_0})} \geq \frac{1}{64}
\end{align*}
for any function $f$ calculated by networks with $k$ layers and $\leq 2^k$ neurons in total. Our pair of expressive powers can provide an improvement of this result in dimension $1$.

\begin{theorem}
\label{theorem: width inefficiency in 1 dimension}
Let $L \geq 2$. There is a neural network with $n_0 = 1$, depth $L^2$, width $3$ and coefficients $\leq 6$ that can compute a function $g$ with the following property: For any function $f$ calculated by a neural network of depth $L$ and width $< 3^{L-1} - 1$ we have
\begin{align*}
    \|f-g\|_{L^1([0,1])} \geq \frac{1}{9}.
\end{align*}
\end{theorem}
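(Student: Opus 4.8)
The plan is to prove a width--depth separation by the classical region-counting strategy (in the spirit of Telgarsky): exhibit an explicit deep narrow network computing a rapidly oscillating sawtooth $g$, bound the number of linear regions that any admissible shallow network $f$ can have via the expressive power $\mathcal{R}$, and then argue that a function with too few linear regions cannot track $g$'s oscillations, forcing a fixed $L^1$ error. The key point is that the two ingredients pull in opposite directions: the deep network attains (nearly) the maximal $\mathcal{R}$ for its design, while the shallow network's $\mathcal{R}$ is comparatively tiny.

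First I would construct $g$ explicitly. Let
\begin{equation*}
    \psi(x) \coloneqq 3\ReLU(x) - 6\ReLU\!\left(x - \tfrac13\right) + 6\ReLU\!\left(x - \tfrac23\right),
\end{equation*}
which maps $[0,1]$ onto $[0,1]$ through three full monotone sweeps ($0\to1\to0\to1$) and is realized by a single hidden layer of width $3$ with all coefficients of modulus $\leq 6$. Setting $g \coloneqq \psi^{\circ L^2}$ gives a network of depth $L^2$, width $3$, $n_0=1$ and coefficients $\leq 6$, exactly as required. Since $\psi$ is surjective on each of its three monotone pieces, the composition $g$ consists of $N \coloneqq 3^{L^2}$ monotone pieces, each sweeping fully between $0$ and $1$ over an interval of width $N^{-1}$; in particular $g$ realizes $3^{L^2}$ linear regions, within the bound $4^{L^2}$ coming from \Cref{theorem: R(n) in 1 dimension}. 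Consecutive pieces assemble into roughly $N/2$ ``bumps'', each a tent ($0\to1\to0$) or valley ($1\to0\to1$) of width $2N^{-1}$.

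Next I would bound the shallow side. Any $f$ computed by a depth-$L$ network with all hidden widths at most $w \coloneqq 3^{L-1}-2$ satisfies, by \Cref{theorem: R(n) in 1 dimension} (or the upper bound of \Cref{theorem: upper bound R(n)}),
\begin{equation*}
    \#\{\text{linear regions of } f\} \;\leq\; \prod_{i=1}^L (n_i+1) \;\leq\; (w+1)^L \;<\; 3^{(L-1)L} \;=\; \frac{N}{3^L}.
\end{equation*}
Writing $p$ for the number of linear regions of $f$, and using $L \geq 2$ so that $3^L \geq 9$, this yields the crucial inequality $p < N/9$, i.e.\ $f$ has far fewer linear regions than $g$ has bumps.

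Finally I would run the $L^1$ estimate. Call a bump of $g$ \emph{bad} if $f$ has no breakpoint in its interior, so that $f$ is affine there; since $f$ has $p-1$ breakpoints, at most $p-1$ bumps are not bad, leaving at least $\tfrac{N}{2}-p$ bad bumps. On a bad bump $f$ is an arbitrary affine function confronting a unit-height tent (or valley) of width $2N^{-1}$, and a short computation shows the best affine $L^1$ approximation of a unit tent has error $\tfrac14$, which scales to $\tfrac{1}{2N}$ on a bump of width $2N^{-1}$. Summing over bad bumps gives
\begin{equation*}
    \|f-g\|_{L^1([0,1])} \;\geq\; \left(\tfrac{N}{2}-p\right)\cdot\tfrac{1}{2N} \;=\; \tfrac14 - \tfrac{p}{2N} \;>\; \tfrac14 - \tfrac{1}{18} \;=\; \tfrac{7}{36} \;>\; \tfrac19,
\end{equation*}
completing the separation. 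The main obstacle I expect is the per-bump lower bound: one must show that \emph{every} affine function, not merely the constant $\tfrac12$, deviates from a full sweep by at least a fixed fraction of the bump's area, and obtain this constant uniformly; the accompanying bump-versus-breakpoint bookkeeping (including boundary bumps and the odd parity of $N$) and the verification that composition keeps all coefficients $\leq 6$ are routine by comparison, and the generous slack between $\tfrac{7}{36}$ and $\tfrac19$ absorbs the small losses from these edge effects.
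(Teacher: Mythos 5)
Your proposal is correct, and it follows the paper's skeleton up to the last step: the same width-$3$ sawtooth (the paper's $h(x)=1-\max\{0,1-3x\}-\max\{0,3x-1\}+\max\{0,6x-4\}$ coincides with your $\psi$ on $[0,1]$), the same composition $g=\psi^{\circ L^2}$ with $3^{L^2}$ full linear sweeps, and the same region bound $(w+1)^L<3^{(L-1)L}$ for the shallow competitor via $\mathcal{R}$. Where you genuinely differ is the key lemma driving the $L^1$ estimate. The paper centers intervals $I_i$ of width $3^{-L^2}$ at the alternating peaks and valleys of $g$, calls $f$ ``good'' on $I_i$ if $f-\tfrac12$ ever takes the same (nonzero) sign as $g-\tfrac12$ there, notes that goodness on two consecutive intervals forces a crossing of the line $y=\tfrac12$ (at most one crossing per linear region of $f$), and on every bad interval uses only the pointwise domination $|f-g|\ge|g-\tfrac12|$, worth $\tfrac14 3^{-L^2}$ per interval; no optimization over affine functions is ever needed. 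You instead count breakpoints: a bump containing no breakpoint of $f$ in its interior sees $f$ affine, and you invoke the fact that every affine function misses a full unit sweep by at least a quarter of the bump's width in $L^1$. That fact --- the obstacle you flag --- is indeed true and short to prove: by convexity of the $L^1$ objective and symmetry of the tent, the optimum over affine functions is attained at the constant $\tfrac12$; alternatively, test $f-g$ against the $+,-,-,+$ sign pattern on the four quarters of the bump, which has vanishing zeroth and first moments and hence annihilates affine functions. In exchange for this extra lemma your bookkeeping is cleaner (each breakpoint spoils at most one bump, so you avoid the paper's factor-$2$ loss from each interval lying in two consecutive pairs) and your constant is better: asymptotically $\tfrac14$ versus the paper's $\tfrac18(1-3^{-L})$, whose value at $L=2$ is exactly the $\tfrac19$ in the statement; your $\tfrac{7}{36}$ clears that threshold with enough slack to absorb the parity and boundary edge effects you defer.
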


Arguably, the opposite scenario is even more fascinating: Are there any shallow neural networks that cannot be approximated by a deep neural network unless it has exponential depth? This question was raised in \cite{widthDepthQuestion} and remained unresolved. The notion of dual expressive powers is strong enough to give an answer.

\begin{theorem}
\label{theorem: depth efficiency}
Any compactly supported function calculated by a neural network with $L$ hidden layers of size $N$ can also be calculated by a neural network with $O(N^{Ln_0(n_0+1)^2})$ hidden layers of size $2n_0+6$.
\end{theorem}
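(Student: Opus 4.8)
\section*{Proof proposal}

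The plan is to chain the two master estimates of the paper: first use the \emph{upper} bound on the number of linear regions to count how complicated the given wide shallow network can possibly be, and then feed that count into the \emph{lower} bound of \Cref{theorem: lower bound E(n)}, which rebuilds any compactly supported piecewise affine function by a width-$(2n_0+6)$ network whose depth is polynomial in the region count. Let $f$ be a compactly supported function computed by a network of design $\mathbf{n} = (n_0, N, \dots, N, n_{L+1})$ with $L$ hidden layers. Being a composition of affine maps and ReLUs, $f$ is piecewise affine, and by hypothesis it has compact support, so it is exactly the kind of input \Cref{theorem: lower bound E(n)} accepts.

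First I would reduce the bookkeeping to scalar output. By \Cref{theorem: n_L+1=1} the maximal number of linear regions is unchanged when the output dimension is replaced by $1$, so the number $k$ of linear regions of $f$ satisfies $k \le \mathcal{R}(n_0, N, \dots, N, 1)$. Second, I would estimate this quantity as a polynomial in $N$. Since \Cref{theorem: upper bound R(n)} is a sharpening of the earlier bound, it suffices to bound the coarser expression $\sum_{(j_1,\dots,j_L) \in J} \prod_{l=1}^L \binom{N}{j_l}$ with $0 \le j_l \le n_0$. Here the index set $J$ has at most $(n_0+1)^L$ elements, a number independent of $N$, and each factor satisfies $\binom{N}{j_l} = O(N^{n_0})$ because $j_l \le n_0$. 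Hence
\begin{equation*}
    k \coloneqq \#\{\text{linear regions of } f\} \le \mathcal{R}(n_0,N,\dots,N,1) = O(N^{n_0 L}),
\end{equation*}
where the implied constant depends only on $L$ and $n_0$.

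Third, I would invoke \Cref{theorem: lower bound E(n)} directly. As $f$ is compactly supported and piecewise affine with $k = O(N^{n_0 L})$ linear regions, it can be recomputed by a ReLU network of width $2n_0+6$ whose number of layers is
\begin{equation*}
    O(k^{(n_0+1)^2}) = O\left(\left(N^{n_0 L}\right)^{(n_0+1)^2}\right) = O\left(N^{L n_0 (n_0+1)^2}\right),
\end{equation*}
which is exactly the claimed bound.

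The combination itself is immediate, so the only real content lies in the second step, the degree count. Using the coarser region bound makes this clean: every binomial factor contributes degree at most $n_0$ in $N$, and the number of multi-indices is bounded independently of $N$, which forces the total degree to be $n_0 L$ rather than anything larger. I expect this degree bookkeeping, together with the routine verification that $f$ is genuinely compactly supported and piecewise affine so that \Cref{theorem: lower bound E(n)} applies verbatim, to be the main (and rather modest) obstacle.
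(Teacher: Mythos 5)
Your proposal is correct and follows essentially the same route as the paper's own proof: bound the number of linear regions of the wide network by $O(N^{n_0 L})$ using the upper bound on $\mathcal{R}(\mathbf{n})$ (the paper estimates $\bigl(\sum_{j=0}^{n_0}\binom{N}{j}\bigr)^L \le (1+N^{n_0})^L$, while you count multi-indices in the coarser bound, which is equivalent), and then feed that count into \Cref{theorem: lower bound E(n)}. The only cosmetic difference is your explicit reduction to scalar output via \Cref{theorem: n_L+1=1}, which the paper leaves implicit.
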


The upshot is that deep narrow ReLU networks can approximate wide shallow ReLU networks reasonably well but this does not work the other way around. \\

The only slightly annoying part of this conclusion is that we only worked with ReLU as our activation function up until now. But this is not a major problem since we can export these results to a whole class of other activation functions. The idea is to replace ReLU by an activation function that can compute ReLU.

\begin{definition}
We call a function $\rho \in C(\mathbb{R})$ \textit{ReLU-computing} if it has a point with nonzero derivative and a point $\alpha \in \mathbb{R}$ such that $\rho$ is $C^3$ in a neighborhood of $\alpha$ and $\rho''(\alpha) \neq 0$.
\end{definition}

In order to export results, we need to be able to simulate a ReLU-network with the help of a ReLU-computing activation function. It suffices to construct a network with a ReLU-computing activation function that estimates ReLU itself. Then we can just replace each instance of ReLU in the original network by this ReLU-computing network.

\begin{theorem}
\label{theorem: ReLU-computing calculates relu}
Let $\varepsilon > 0$. A neural network with $L$ layers of size $N$, coefficients bounded by $C$ and ReLU activation function can be approximated on $[-1,1]$ by a neural network with $O(L\ln(\varepsilon^{-1}L)^3 + L^4\ln(CN)^3)$ layers of size $8N$ with a ReLU-computing activation function $\rho$ up to an error of $\varepsilon$.
\end{theorem}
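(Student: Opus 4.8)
The plan is to carry out exactly the reduction indicated before the statement: construct one fixed ``gadget'' subnetwork built from $\rho$ that approximates $\ReLU$ accurately on a bounded interval, substitute it for every occurrence of $\ReLU$ in the given network, and then control how the individual substitution errors accumulate across the $L$ layers. The depth of the gadget as a function of the target accuracy $\delta$, together with the accuracy forced by error propagation, is what will produce the stated bound.

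First I would extract two elementary gates from the two defining properties of a ReLU-computing $\rho$. Taking a point $\beta$ with $\rho'(\beta)\neq 0$ and a small scaling $t$, the one-neuron expression
\[
I_t(x)=\frac{\rho(\beta+tx)-\rho(\beta)}{\rho'(\beta)\,t}=x+O(t)
\]
approximates the identity, which lets me route and carry values through the many layers of a gadget. Taking the second-order point $\alpha$ with $\rho''(\alpha)\neq 0$, the symmetric second difference
\[
S_t(x)=\frac{\rho(\alpha+tx)+\rho(\alpha-tx)-2\rho(\alpha)}{\rho''(\alpha)\,t^2}=x^2+O(t)
\]
approximates squaring using two neurons in a single layer (the odd Taylor terms cancel, so the $C^3$ hypothesis suffices). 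Polarisation then gives approximate multiplication, $P(x,y)=\tfrac14\bigl(S_t(x+y)-S_t(x-y)\bigr)=xy+O(t)$, again in constant width. Note that the coefficients $1/t^2$ are huge, but the theorem bounds only the \emph{original} network's coefficients, so this is harmless.

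The heart of the argument is a sub-lemma: $\ReLU$ (equivalently $|x|$, via $\ReLU(x)=\tfrac{x+|x|}{2}$) can be approximated to accuracy $\delta$ on a fixed interval by a $\rho$-network of constant width and depth $O(\ln(\delta^{-1})^3)$. I would build $|x|=\sqrt{x^2}$ from the square gate followed by a division-free square-root iteration (of Newton--Schulz type) assembled from the product gates; this converges quadratically, costing only $O(\ln\ln(\delta^{-1}))$ steps once the input is inside the basin, but it degenerates precisely at $x=0$, the kink of $\ReLU$. I would handle a neighbourhood $|x|<\eta$ with $\eta\sim\delta$ separately, clamping $|x|$ to $0$ there (an $O(\eta)$ error since $|x|$ is small), and supply a polynomial initial guess of polylogarithmic degree to get within the iteration's basin uniformly on $|x|\ge\eta$. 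The interplay of this range reduction near the singularity with the constant-width (Horner-type) evaluation of the polynomial is what forces the cubic logarithm. For the assembly I would bound the layer-$\ell$ pre-activations by $B_\ell\le(C(N+1))^{\ell}$, so that $\ln B_L=O(L\ln(CN))$ fixes the interval on which each gadget must be accurate, and bound the Lipschitz constant of the whole network by $(CN)^L$; requiring each of the $\le NL$ substituted gates to be accurate to $\delta$ then yields total error $\lesssim NL\,\delta\,(CN)^L$, so to beat $\varepsilon$ I take $\ln(\delta^{-1})=O\bigl(L\ln(CN)+\ln(NL\varepsilon^{-1})\bigr)$. Replacing the $N$ activations of each original layer in parallel keeps the width at $8N$, and the depth added per original layer is $O(\ln(\delta^{-1})^3)$; multiplying by $L$ and expanding the cube via $(a+b)^3\lesssim a^3+b^3$ gives $O\bigl(L\ln(\varepsilon^{-1}L)^3+L^4\ln(CN)^3\bigr)$.

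The main obstacle will be the sub-lemma: producing the non-smooth $\ReLU$ from smooth gates uniformly across its kink, in \emph{constant} width and with the sharp $\ln(\delta^{-1})^3$ depth. Two coupled constraints make this delicate. First, I must keep the final approximant's Lipschitz constant at $1+o(1)$, matching that of $\ReLU$, or else the propagation factor $(CN)^L$ degrades and the accuracy budget breaks. Second, as $t\to 0$ the arguments $\alpha\pm t(\cdot)$ must stay inside the $C^3$ neighbourhood while the enlarged evaluation interval $[-B_L,B_L]$ pushes in the opposite direction, so the scalings $t$ and the gadget's internal magnitudes have to be chosen compatibly with $B_L$ at every composition step. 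Getting the iteration's basin, the clamping scale $\eta$, and the constant-width polynomial degree to line up so that the depth is genuinely cubic-logarithmic (rather than polynomial in $\delta^{-1}$) is where the real work lies; the remaining error bookkeeping is routine.
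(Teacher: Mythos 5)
Your high-level assembly is exactly the paper's: approximate the identity and the square by first and second difference quotients of $\rho$ (these are \Cref{lemma: identity} and \Cref{lemma: square}), get products by polarisation (\Cref{lemma: product}), replace each ReLU neuron by a gadget after rescaling the layer-$k$ inputs by $(C(N+1))^{1-k}$, use the $1$-Lipschitz property of $\ReLU$ so the per-layer errors add, and sum the cubes to get $O(L\ln(\varepsilon^{-1}L)^3+L^4\ln(CN)^3)$. The divergence is in the key sub-lemma. The paper (\Cref{lemma: approximate ReLU}) does not go through $\sqrt{x^2}$: it uses Newman's rational approximation $R(x)=xP(x)/(P(x)+P(-x))$ with $P(x)=\prod_{k=0}^{n-1}(x+\xi^k)$, $\xi=e^{-1/\sqrt n}$, which has error $O(e^{-\sqrt n})$ on $[-1,1]$, and computes the single reciprocal via the truncated product $\prod_{i\le m}(1+z^{2^i})\approx(1-z)^{-1}$ (\Cref{lemma: inverse}); since $P(x)+P(-x)$ can be as small as $2\xi^{\binom{n+1}{2}}$, one needs $m=O(n^{3/2})$ layers, and this conditioning is precisely what produces the exponent $3$ in $\ln(\delta^{-1})^3$.

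Your version of the sub-lemma contains a step that cannot work as stated: there is no polynomial of polylogarithmic degree giving a uniformly good initial guess for the Newton--Schulz iteration on $\eta\le|x|\le 1$ with $\eta\sim\delta$. Constant-quality basin membership means $x^2p(x^2)^2\in[\tfrac12,\tfrac32]$ there; setting $Q(u)\coloneqq u\,p(u^2)$, this $Q$ is an odd polynomial of some degree $d$ with $|Q(u)|\ge 1/\sqrt2$ for $\eta\le|u|\le1$ and $Q(0)=0$. If $d=o(\eta^{-1/2})$, the Remez inequality forces $\|Q\|_{[-1,1]}=O(1)$, and then Bernstein's inequality gives $|Q(\eta)|=|Q(\eta)-Q(0)|=O(d\eta)$, a contradiction unless $d=\Omega(\eta^{-1})$; so in any case $d=\Omega(\eta^{-1/2})=\Omega(\delta^{-1/2})$. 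Evaluating such a polynomial in constant width (Horner) then costs depth polynomial in $\delta^{-1}$, so the mechanism you identify as ``what forces the cubic logarithm'' actually destroys the bound: polylog degree and $O(\ln\ln(\delta^{-1}))$ iterations cannot be had simultaneously. The repair is to drop the polynomial guess entirely: start $y_{k+1}=\tfrac12 y_k(3-ay_k^2)$ at the constant $y_0=1$ with $a=x^2\in(0,1]$. Then $e_k\coloneqq 1-ay_k^2$ satisfies $e_{k+1}=\tfrac14 e_k^2(3+e_k)\in[0,1)$, so $1-e_k$ grows geometrically and then $e_k$ decays quadratically, giving accuracy $\delta$ on $|x|\ge\eta$ after $O(\ln(\delta^{-1}))$ constant-depth iterations; moreover $e_k\ge0$ implies $0\le ay_k\le\sqrt a=|x|$, so for $|x|\le\eta$ the output is automatically within $\eta$ of $|x|$ and no clamping gadget is needed. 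With this repair your route is sound and in fact yields gadget depth $O(\ln(\delta^{-1}))$, better than the paper's $O(\ln(\delta^{-1})^3)$ (your observation that the new network's coefficients are unconstrained, so the gates can be made arbitrarily accurate, is correct and is what makes this legitimate); but as written, the proposal's central step is impossible.
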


As with the expressive powers, we will also use a somewhat dual class of activation functions, which we call ReLU-computable.

\begin{definition}
\label{def: ReLU-computable}
A function $\rho: \mathbb{R} \to \mathbb{R}$ is called \textit{ReLU-computable} if it satisfies the following properties:
\begin{enumerate}
    \item There is a piecewise affine function $l: \mathbb{R} \to \mathbb{R}$ such that
    \begin{align*}
        \rho(x) - l(x) \xrightarrow{|x| \to \infty} 0
    \end{align*}
    exponentially quickly.
    \item $\rho$ is smooth, $|\rho(0)| \leq 1$
    \begin{align*}
        \frac{1}{n!} \left|\frac{\mathrm{d}^n}{\mathrm{d}x^n}\rho(x)\right| &\leq 1 \quad \text{for all $n \geq 1$ and} \\ 
        \frac{1}{n!} \frac{\mathrm{d}^n}{\mathrm{d}x^n} \rho(x) &\xrightarrow{n \to \infty} 0
    \end{align*}
    exponentially quickly uniformly in $x$.
\end{enumerate}
\end{definition}

It is worth noting that a lot of commonly used activation functions like Softplus, Gaussian or Sigmoid are ReLU-computing as well as ReLU-computable. We will show this in \Cref{Section 6}. \\

Similar to above, we need to be able to simulate a network with a ReLU-computable activation function using a ReLU-network. This can be done with the exact same idea. We explicitly construct a ReLU-network that approximates the ReLU-computable activation function. This allows us to just replace the ReLU-computable activation function in each neuron of the original network by our newly constructed ReLU-network.

\begin{theorem}
\label{theorem: relu calculates ReLU-computable}
Let $\varepsilon > 0$. A neural network with $L$ layers of size $N$, coefficients bounded by $C$ and a ReLU-computable activation function $\rho$ can be approximated on $[-1,1]$ by a neural network with $O(L\ln(\varepsilon^{-1}L)^3 + L^4\ln(CN)^3)$ layers of size $11N$ with ReLU activation function up to an error of $\varepsilon$.
\end{theorem}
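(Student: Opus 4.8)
The plan is to prove this exactly as its dual \Cref{theorem: ReLU-computing calculates relu} suggests: construct a single \ReLU{} sub-network $\Phi$ that approximates the one-dimensional activation $\rho$, and then substitute a copy of $\Phi$ for $\rho$ inside every neuron of the given network. If $\Phi$ has constant width and its depth grows only polylogarithmically in the per-neuron accuracy $\delta$, then running the $N$ copies of $\Phi$ in one layer side by side keeps the width a fixed multiple of $N$, and the total depth is $O(L)$ times the depth of $\Phi$. Everything therefore reduces to the following lemma: for every $\delta\in(0,1)$ there is a \ReLU{} network $\Phi$ of width $O(1)$ and depth $O(\ln(\delta^{-1})^3)$ with $|\Phi(x)-\rho(x)|\le\delta$ for all $x\in\mathbb{R}$.

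To build $\Phi$ I would split the line at a threshold $T=O(\ln(\delta^{-1}))$. Using Property 1 of \Cref{def: ReLU-computable}, choose $T$ larger than all breakpoints of the piecewise affine $l$ and large enough that $|\rho-l|\le\delta$ for $|x|>T$; on the far field $\rho$ is then $\delta$-close to the two outer affine rays of $l$, which a handful of \ReLU{}-neurons reproduce exactly (crucially only these two rays enter, so the possibly large number of pieces of $l$ never affects the width). On the near field $[-T,T]$ I would approximate $\rho$ by a polynomial. Here Property 2 is really an analyticity statement: the uniform bound $\frac{1}{n!}|\rho^{(n)}(x)|\le 1$ together with its exponential decay shows, via $|\rho(x+iy)|\le\sum_n\frac{|\rho^{(n)}(x)|}{n!}|y|^n$, that $\rho$ extends to a bounded analytic function on a horizontal strip of half-width $>1$. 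A Bernstein--Chebyshev estimate on $[-T,T]$ then yields a polynomial $p$ of degree $M=O(T\ln(\delta^{-1}))=O(\ln(\delta^{-1})^2)$ with $\sup_{[-T,T]}|\rho-p|\le\delta$. I would evaluate $p$ in the Chebyshev basis by the Clenshaw recurrence, realising each of the $M$ scalar products through $ab=\frac14((a+b)^2-(a-b)^2)$ and the Yarotsky \ReLU{}-approximation of $t\mapsto t^2$ to accuracy $\delta/M$; each step costs depth $O(\ln(\delta^{-1}))$ and width $O(1)$, hence depth $O(M\ln(\delta^{-1}))=O(\ln(\delta^{-1})^3)$ in total. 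Finally I would glue the two fields with \ReLU{}-cutoffs whose transition falls where $p$, $l$ and $\rho$ all agree to within $\delta$, the products ``cutoff times affine/polynomial'' again coming from the squaring gadget. A careful channel count arranges all of this into $11$ neurons per layer, giving width $11N$.

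It remains to fix $\delta$ and track error propagation. Since $|\rho'|\le 1$ makes $\rho$ $1$-Lipschitz and each affine map has $\ell^\infty\!\to\ell^\infty$ norm at most $CN$, inputs in $[-1,1]$ produce pre-activations bounded by $R=O((CN)^L)$, so a single $\Phi$ (accurate on all of $\mathbb{R}$) serves every neuron, while a replacement error $\delta$ at one neuron is amplified by at most $(CN)^{L}$ downstream. Summing over the $LN$ neurons gives total error at most $LN(CN)^L\delta$, so choosing $\delta$ with $LN(CN)^L\delta\le\varepsilon$ yields $\ln(\delta^{-1})=O(\ln(\varepsilon^{-1}L)+L\ln(CN))$. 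Inserting this into the per-neuron depth $O(\ln(\delta^{-1})^3)$, multiplying by the $O(L)$ layers, and expanding $(a+b)^3=O(a^3+b^3)$ reproduces the claimed depth $O(L\ln(\varepsilon^{-1}L)^3+L^4\ln(CN)^3)$.

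I expect the main obstacle to be the construction of $\Phi$ rather than the bookkeeping: one must simultaneously (i) push the polynomial degree down to $O(\ln(\delta^{-1})^2)$, which forces the strip-analyticity from Property 2 rather than a crude Taylor remainder bound; (ii) hold the evaluation depth at $O(\ln(\delta^{-1})^3)$ with a numerically stable scheme (Clenshaw) so that all intermediate values stay bounded and the $M$ product-accuracies $\delta/M$ accumulate correctly; and (iii) cover the unbounded domain using only the two asymptotic rays of $l$, so that the width constant is independent of $\rho$ and equal to $11$. Once $\Phi$ is in hand, the error propagation and the final asymptotic simplification are routine.
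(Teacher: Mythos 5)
Your proposal is correct in outline and, at the level of the theorem itself, coincides with the paper's argument: both proofs replace every $\rho$-neuron by a fixed-width ReLU block approximating $\rho$, run the $N$ copies per layer side by side, use the $1$-Lipschitzness of $\rho$ (from Property 2 of \Cref{def: ReLU-computable}) and the coefficient bound to show that an error committed in one layer is amplified by at most a factor $C(N+1)$ per subsequent layer, and then choose the per-neuron accuracy so that $\ln(\delta^{-1}) = O(\ln(\varepsilon^{-1}L) + L\ln(CN))$, which cubed and summed over layers gives the claimed depth. (The paper uses layer-dependent accuracies $\varepsilon_k = \varepsilon/(L(C(N+1))^{L-k})$ where you use a uniform $\delta$; the asymptotics agree.) Where you genuinely depart from the paper is in the construction of the block, i.e.\ the paper's \Cref{lemma: affine approximation of activation function}. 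The paper covers the compact interval outside of which $|\rho-l|<\varepsilon$ by $O(\ln(\varepsilon^{-1}))$ overlapping unit intervals, takes on each one the Taylor polynomial of degree $O(\ln(\varepsilon^{-1}))$ --- whose remainder bound and coefficient bound $\leq 1$ come directly from Property 2, with no complex analysis --- evaluates each via \Cref{lemma: affine approximation of polynomial}, and glues them with a piecewise affine partition of unity before adding $l$. You instead build one global Chebyshev approximant of degree $O(\ln(\delta^{-1})^2)$ on $[-T,T]$, justified by analytic continuation to a strip (which Property 2 does give: $\frac{1}{n!}|\rho^{(n)}|\leq Cq^n$ with $q<1$ uniformly makes $\rho$ real-analytic with radius $1/q>1$) plus a Bernstein-type estimate, and evaluate it by the Clenshaw recurrence. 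Both routes land at block depth $O(\ln(\delta^{-1})^3)$ and constant width; the paper's is more elementary and its coefficient bookkeeping is free, while yours trades many local polynomials and the partition of unity for a single polynomial, at the price of an approximation-theoretic input and a stability analysis.

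Three details in your construction need care, of which the third is a genuine (though repairable) flaw. (i) $\rho$ is not bounded on the strip, only $|\rho(x+iy)| \lesssim 1+|x|$, so ``bounded analytic'' is an overstatement; but a bound $O(T)$ on the Bernstein ellipse of $[-T,T]$ only adds $\ln T$ to the degree, so $M = O(T\ln(\delta^{-1}))$ survives. (ii) In Clenshaw, an error injected at one step is propagated through the three-term recurrence and can grow by a factor $O(M)$, so the per-product accuracy should be $\delta/M^2$ rather than $\delta/M$; asymptotically nothing changes. (iii) The gluing cannot be done the way you state it: the multiplication gadget built from \Cref{lemma: product} and \Cref{lemma: affine approximation of square} is only correct on a bounded box, so you cannot form ``(far-field cutoff) $\times$ (affine ray)'' with it, because the ray is unbounded exactly where that cutoff equals $1$. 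Instead write the block as $\Phi = a + \chi\cdot(p-a)$, where $a$ is the piecewise affine interpolant of the two outer rays of $l$ (exactly computable by $O(1)$ ReLU neurons) and $\chi$ is the plateau cutoff vanishing outside $[-T,T]$: on $[-T,T]$ the factor $p-a$ is bounded so the gadget applies after rescaling, and outside $[-T,T]$ the input $\chi=0$ forces the gadget's two squares to receive identical arguments, so the computed product is exactly $0$ however large $p-a$ is --- the same exactness the paper invokes when it multiplies by $0$. This arrangement is what makes your $\Phi$ accurate on all of $\mathbb{R}$, which your error-propagation step (pre-activations of size up to $(CN)^L$) genuinely requires.
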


Ultimately, we can use the dual results \Cref{theorem: ReLU-computing calculates relu} and \Cref{theorem: relu calculates ReLU-computable} to export a whole range of results for ReLU networks to other activation functions. An easy example of this process is the width inefficiency result \Cref{theorem: width inefficiency in 1 dimension}.

\begin{theorem}
\label{theorem: width inefficiency for ReLU-computable activation}
A neural network with ReLU-computing activation function $\rho$ and $O(L^{40})$ hidden layers of size $24$ can compute a function that cannot be approximated in $L^1([0,1])$ by any neural network with a ReLU-computable activation function of depth $L$ unless it has exponential width or coefficients.
\end{theorem}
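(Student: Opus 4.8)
The plan is to export the ReLU width-inefficiency result \Cref{theorem: width inefficiency in 1 dimension} to the non-ReLU setting by sandwiching it between the two conversion theorems \Cref{theorem: ReLU-computing calculates relu} and \Cref{theorem: relu calculates ReLU-computable}. Concretely, I would apply \Cref{theorem: width inefficiency in 1 dimension} with its internal parameter set to $M = L^5$, obtaining a hard ReLU network $g$ of depth $M^2 = L^{10}$, width $3$ and coefficients $\leq 6$. Feeding $g$ into \Cref{theorem: ReLU-computing calculates relu} with a small constant error $\varepsilon_1$ produces a network $\tilde g$ with the ReLU-computing activation $\rho$, of size $8 \cdot 3 = 24$ and depth $O((L^{10})^4) = O(L^{40})$, approximating $g$ uniformly on $[0,1]$. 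This $\tilde g$ is the hard function in the statement, and it already carries the advertised dimensions.

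For the lower bound I would argue by contradiction. Suppose some ReLU-computable network $h$ of depth $L$, width $N$ and coefficients $C$ satisfies $\|h - \tilde g\|_{L^1([0,1])} < \tau$ for a threshold $\tau$ to be fixed. Applying \Cref{theorem: relu calculates ReLU-computable} with a small constant error $\varepsilon_2$ converts $h$ into a genuine ReLU network $\tilde h$ of width $11N$ and depth $O(L \ln(\varepsilon_2^{-1}L)^3 + L^4 \ln(CN)^3)$. By the triangle inequality,
\[
\|\tilde h - g\|_{L^1([0,1])} \leq \|\tilde h - h\|_{L^1} + \|h - \tilde g\|_{L^1} + \|\tilde g - g\|_{L^1} < \varepsilon_2 + \tau + \varepsilon_1,
\]
so choosing $\varepsilon_1, \varepsilon_2$ and $\tau$ each below $1/27$ forces $\|\tilde h - g\|_{L^1([0,1])} < 1/9$.

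The next step is to check when $\tilde h$ lands in the forbidden regime of \Cref{theorem: width inefficiency in 1 dimension}, namely depth $\leq M = L^5$ together with width $< 3^{M-1}-1$. The depth requirement reads $L^4 \ln(CN)^3 \lesssim L^5$, i.e.\ $\ln(CN) \lesssim L^{1/3}$, while the width requirement reads $11N < 3^{L^5-1}-1$. If neither $N$ nor $C$ is exponential, both conditions hold, and then \Cref{theorem: width inefficiency in 1 dimension} forces $\|\tilde h - g\|_{L^1([0,1])} \geq 1/9$, contradicting the display above. Hence no depth-$L$ ReLU-computable approximant with sub-exponential width and coefficients can exist, which is exactly the claim.

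The main obstacle I anticipate is the parameter bookkeeping rather than any new idea: I must verify that the two conversion theorems compose so that the depth blow-up factor $L^4 \ln(CN)^3$ from \Cref{theorem: relu calculates ReLU-computable} stays below $M = L^5$ precisely in the sub-exponential regime, so that the quantitative thresholds \enquote{$N \gtrsim 3^{L^5}$} and \enquote{$\ln(CN) \gtrsim L^{1/3}$} are what get interpreted as exponential width and coefficients. I also need the three accumulated approximation errors to remain below the separation constant $1/9$, which only requires taking the two conversion accuracies and the assumed approximation quality $\tau$ to be small absolute constants, and finally to confirm that the hard network has size exactly $8 \cdot 3 = 24$ and depth exactly $O(L^{40})$ as claimed.
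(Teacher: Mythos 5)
Your proposal is correct and takes essentially the same route as the paper: the paper also sandwiches \Cref{theorem: width inefficiency in 1 dimension} between the two conversion results, obtaining the hard function by applying \Cref{theorem: ReLU-computing calculates relu} to a ReLU network of depth $L^{10}$ and then deriving a contradiction via \Cref{theorem: relu calculates ReLU-computable}, with the forbidden regime at depth $L^5$. Your explicit parameter bookkeeping ($M = L^5$, hence hard-network depth $M^2 = L^{10}$ and converted depth $O(L^{40})$) is in fact tidier than the paper's own phrasing and matches the exponents in the statement exactly.
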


To the best of our knowledge, results of this type were not known yet. The same process could also be applied to \Cref{theorem: depth efficiency}, \Cref{theorem: approximation rate in sobolev norm}, universal approximation theorems or any other result of that nature. The only obstacle is the size of the coefficients $C$ in \Cref{theorem: ReLU-computing calculates relu} and \Cref{theorem: relu calculates ReLU-computable}. \\

To sum up, any result for ReLU networks also holds for other reasonable activation functions as long as one can control the size of the coefficients. In particular, wide shallow neural networks cannot always approximate deep narrow neural networks reasonably well.

\newpage

\section{Estimates for $\mathcal{R}(\mathbf{n})$}
\label{Section 2}

\subsection{$1$-dimensional input}

Before we start with calculating the expressive power we need to do some ground work. The notion of neurons can be extended to enhanced neurons.

\begin{definition}[\cite{universalApproximation}, Section 4]
Let $k,m,n \in \mathbb{N}$. A \textit{hidden layer of $m$ enhanced neurons} is defined as $g \circ \rho \circ f$, where $f: \mathbb{R}^k \to \mathbb{R}^m$ and $g: \mathbb{R}^m \to \mathbb{R}^n$ are affine and $\rho$ is the activation function.
\end{definition}

Note that the additional affine function $g$ can be absorbed into the affine function of the next layer. Therefore, using enhanced neurons will result in the same set of functions as using ordinary neurons. The key difference is that it is considerably easier to work with them. \\

Another worthwhile simplification is working with $n_{L+1} = 1$. This does not change the expressive power either, which we will show in the following.

\begin{proof}[Proof of \Cref{theorem: n_L+1=1}]
Pick some weights for the neural network such that the number of linear regions is equal to $\mathcal{R}(\mathbf{n})$ for arbitrary $n_{L+1}$. Let $A^{(k)}$ be the matrices corresponding to the derivatives in the linear regions. Pick $z \in \mathbb{R}^{n_{L+1}}$ with linearly independent entries over $\mathbb{Q}[A^{(k)}_{ij}]$. This is possible because $\mathbb{Q}[A^{(k)}_{ij}]$ is countable but $\mathbb{R}$ is uncountable. Now, consider the neural network with output dimension $1$ that calculates the scalar product of $z$ and the output of the other neural network. \\

Assume that it has less linear regions. Then there are $k$ and $l$ such that the linear regions corresponding to $A^{(k)} \neq A^{(l)}$ merge together. This would imply $z^T A^{(k)} = z^T A^{(l)}$, which contradicts linear independence. Hence, the new neural network with output dimension $1$ has the same number of linear regions as the old neural network, so we can assume $n_{L+1} = 1$.
\end{proof}

With the help of this theorem we can start evaluating the usual expressive power $\mathcal{R}(\mathbf{n})$ for $1$-dimensional input. $\mathcal{R}(\mathbf{n})$ counts the number of linear regions. In order to calculate it, we also need the number of non-constant linear regions.

\begin{definition}
Let $\Tilde{\mathcal{R}}(\mathbf{n})$ be the maximal number of non-constant linear regions of a function in $\mathcal{F}_{\mathbf{n}}$. Similarly, we write $\mathcal{R}(f)$ and $\Tilde{\mathcal{R}}(f)$ for the number of linear regions and non-constant linear regions of the function $f$, respectively.
\end{definition}

\begin{definition}
A point in the boundary of a linear region is called \textit{breakpoint}.
\end{definition}

Non-constant linear regions are important because additional layers can break up those linear regions into even more linear regions. In contrast to that, constant regions will always stay constant and therefore do not increase the value of $\mathcal{R}(\mathbf{n})$ any further. The details of this phenomenon can be seen in the following proof.

\begin{proof}[Proof of \Cref{theorem: R(n) in 1 dimension}]
We prove the lower bound by explicitly constructing a neural network with the desired number of linear regions. A hidden layer with $n$ neurons will compute the function $g_n$, which we will define in a moment. The graphs of all of those functions will have an interesting behavior in $[0,1]^2$ but this will be made more explicit later on. Let's start with the case $n \geq 3$, where we use the following neurons:
\begin{align*}
    -\frac{3}{2}&\max\{0,(2n+1)x-1\} \\
    &\max\{0,(2n+1)x-3\} \\
    -&\max\{0,-(2n+1)x+5\} \\
    (-1)^i&\max\{0,(2n+1)x-(2i-1)\} \quad \text{for } i=4,\dots,n
\end{align*}
Let $g_n$ be the sum of those neurons plus $5$. Obviously, $g_n$ is a piecewise affine function with breakpoints at $x=\frac{2j-1}{2n+1}$ for $j=1,\dots,n$. For $j=4,\dots,n$ we get
\begin{align*}
    g_n\left(\frac{2j-1}{2n+1}\right) &= 5 - \frac{3}{2}(2j-2) + (2j-4) + \sum_{i=4}^j (-1)^i ((2j-1)-(2i-1)) \\
    &= 
    \begin{cases}
        0 &\text{if $j$ is even}, \\
        1 &\text{if $j$ is odd},
    \end{cases}
\end{align*}
where the last equality can be shown via a simple induction on $j$. For $j=1,2,3$ a direct calculation shows that the pattern persists. Furthermore, we have $g(0) = 1$ and $g(1) = (-1)^n$. \\

\begin{figure}[h]
    \centering
    \begin{tikzpicture}
    \begin{axis}[no markers, ymin=-1, ymax=2]
    \addplot+[domain=-0.3:1.3,samples=100,color=blue]{5-3/2*max(0,9*x-1)+max(0,9*x-3)-max(0,-9*x+5)+max(0,9*x-7)};
    \addplot+[dashed,color=black] coordinates {(0,0)(0,1)};
    \addplot+[dashed,color=black] coordinates {(0,1)(1,1)};
    \addplot+[dashed,color=black] coordinates {(1,1)(1,0)};
    \addplot+[dashed,color=black] coordinates {(1,0)(0,0)};
    \end{axis}
    \end{tikzpicture}
    \caption{Graph of $g_4$}
\end{figure}
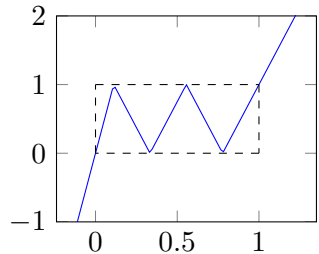

For $n=2$ we define
\begin{equation*}
    g_2(x) = 1 - \max\{0,-3x+1\} - \max\{0,3x-2\},
\end{equation*}
which is shown in \Cref{graph of g2}. \\

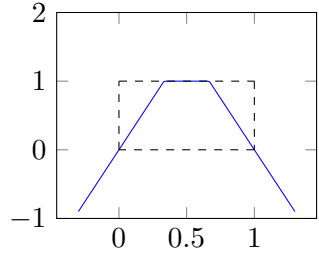
\begin{figure}[h]
    \centering
    \begin{tikzpicture}
    \begin{axis}[no markers, ymin=-1, ymax=2]
    \addplot+[domain=-0.3:1.3,samples=100,color=blue]{1-max(0,-3*x+1)-max(0,3*x-2)};
    \addplot+[dashed,color=black] coordinates {(0,0)(0,1)};
    \addplot+[dashed,color=black] coordinates {(0,1)(1,1)};
    \addplot+[dashed,color=black] coordinates {(1,1)(1,0)};
    \addplot+[dashed,color=black] coordinates {(1,0)(0,0)};
    \end{axis}
    \end{tikzpicture}
    \caption{Graph of $g_2$}
    \label{graph of g2}
\end{figure}

Now, we claim that the neural network that computes
\begin{align*}
    g = g_{n_l} \circ g_{n_{l-1}} \circ \dots \circ g_{n_1}
\end{align*}
has the right number of linear regions. The key observation is the following: Any time $g_{n_1}$ runs through the values from $0$ to $1$ the function $g_{n_2} \circ g_{n_1}$ will traverse the graph of $g_{n_2}$ from $0$ to $1$. \\

\begin{figure}[h]
    \centering
    \begin{tikzpicture}
    \begin{axis}[no markers, ymin=-1, ymax=2]
    \addplot+[domain=-0.3:1.3,samples=100,color=blue]{5-3/2*max(0,7*x-1)+max(0,7*x-3)-max(0,-7*x+5)};
    \addplot+[dashed,color=black] coordinates {(0,0)(0,1)};
    \addplot+[dashed,color=black] coordinates {(0,1)(1,1)};
    \addplot+[dashed,color=black] coordinates {(1,1)(1,0)};
    \addplot+[dashed,color=black] coordinates {(1,0)(0,0)};
    \end{axis}
    \end{tikzpicture}
    \begin{tikzpicture}[
        evaluate={
            function f(\x) {
                return 5-3/2*max(0,7*\x-1)+max(0,7*\x-3)-max(0,-7*\x+5);
            };
        },
    ]
    \begin{axis}[no markers,ymin=-1,ymax=2]
    \addplot+[domain=-0.3:1.3,samples=200,color=blue]{1-max(0,-3*f(x)+1)-max(0,3*f(x)-2)};
    \addplot+[dashed,color=black] coordinates {(0,0)(0,1)};
    \addplot+[dashed,color=black] coordinates {(0,1)(1,1)};
    \addplot+[dashed,color=black] coordinates {(1,1)(1,0)};
    \addplot+[dashed,color=black] coordinates {(1,0)(0,0)};
    \end{axis}
    \end{tikzpicture}
    \caption{Graphs of $g_3$ and $g_2 \circ g_3$}
\end{figure}
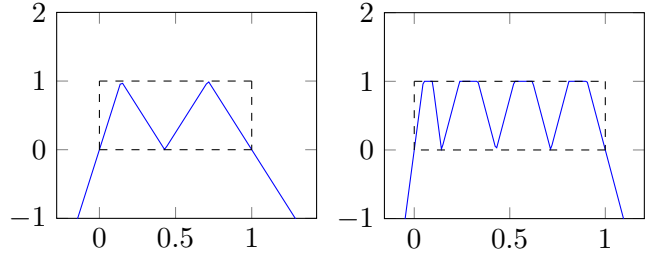

Hence, any non-constant linear region of $g_{n_1}$ in $[0,1]$ gives rise to a scaled copy of the graph of $g_{n_2}$ in the dashed box, which contains all of the interesting behavior of $g_{n_2}$. The constant linear regions of $g_{n_1}$ will just carry over to $g_{n_2} \circ g_{n_1}$. In particular, for $n_2 \geq 2$ we get
\begin{align*}
    \mathcal{R}(g_{n_2} \circ g_{n_1}) 
    &= \mathcal{R}(g_{n_2}) \Tilde{\mathcal{R}}(g_{n_1}) + \mathcal{R}(g_{n_1}) - \Tilde{\mathcal{R}}(g_{n_1}) \\
    &= (\mathcal{R}(g_{n_2})-1) \Tilde{\mathcal{R}}(g_{n_1}) + \mathcal{R}(g_{n_1}) \quad \text{and} \\
    \Tilde{\mathcal{R}}(g_{n_2} \circ g_{n_1}) &= \Tilde{\mathcal{R}}(g_{n_2}) \Tilde{\mathcal{R}}(g_{n_1}).
\end{align*}
By repeatedly applying this observation we obtain
\begin{align*}
    \mathcal{R}(g) &= 1 + \sum_{i=1}^L (\mathcal{R}(g_{n_i})-1) \prod_{j=1}^{i-1} \Tilde{\mathcal{R}}(g_{n_j}) \\
    &= 1 + \sum_{i=1}^L n_i \prod_{j=1}^{i-1} (n_j + \mathbbm{1}_{n_j > 2}),
\end{align*}
which proves the lower bound for $\mathcal{R}(\mathbf{n})$. \\

For the upper bound we group together adjacent linear regions with the same sign of the slope and call them monotone regions. We use the notation $\mathcal{R}^m$ for the monotone regions. First notice that $\mathcal{R}^m(n_i) \leq \mathcal{R}(n_i) \leq n_i + 1$ for $n_i > 2$ since $n_i$ neurons in one layer can produce at most $n_i$ breakpoints. \\

Furthermore, we have $\mathcal{R}(2) \leq 3$ for the same reason and $\mathcal{R}^m(2) \leq 2$, which can be proven by a simple case distinction: If we have a constant linear region or only two linear regions, the claim is clear. Otherwise the slope of the linear region in the middle is the sum of the slopes of the linear regions on both sides. Hence, the middle region will be in the same monotone region as the linear region with the bigger absolute value of the slope. \\

If we add a new layer with $n_i$ neurons, each monotone region is split up into at most $\mathcal{R}^m(n_i)$ monotone regions. Hence, we have
\begin{align*}
    \mathcal{R}^m(n_1,n_2,\dots,n_i) &\leq \prod_{j=1}^i \mathcal{R}^m(n_j) \\
    &\leq \prod_{j=1}^i (n_j + \mathbbm{1}_{n_j > 2}).
\end{align*}
For linear regions we can keep all of the breakpoints from the previous layer. Additionally, we can add at most $\mathcal{R}(n_i)-1$ breakpoints to each monotone region. This leads to the same recursion as in the lower bound and eventually proves the upper bound as well.
\end{proof}

Before moving on to the case of general input dimension, we take some time to ponder about the consequences of \Cref{theorem: R(n) in 1 dimension}. Layers with $2$ neurons play a special role because of the indicator function $\mathbbm{1}_{n_j > 2}$ in the formula. They form a bottleneck, which we will prove below.

\begin{proof}[Proof of \Cref{corollary: R(n) in 1 dimension bottleneck}]
Assume that $n_l = 2$ for some $1 \leq l \leq L$. Define $\Tilde{n}_i$ by swapping $n_l$ to the end, i.e.
\begin{align*}
    \Tilde{\mathbf{n}} = (1,n_1,\dots,n_{l-1},n_{l+1},\dots,n_L,n_l,1).
\end{align*}
Observe that for every $i = l,\dots,L$ we have
\begin{align*}
    \frac{n_i \prod_{j=1}^{i-1} (n_j + \mathbbm{1}_{n_j > 2})}{\Tilde{n}_i \prod_{j=1}^{i-1} (\Tilde{n}_j + \mathbbm{1}_{\Tilde{n}_j > 2})} &= \frac{n_i (n_l + \mathbbm{1}_{n_l > 2})}{n_{i+1} (n_i + \mathbbm{1}_{n_i > 2})} \leq 1 \\
    \Rightarrow n_i \prod_{j=1}^{i-1} (n_j + \mathbbm{1}_{n_j > 2}) &\leq \Tilde{n}_i \prod_{j=1}^{i-1} (\Tilde{n}_j + \mathbbm{1}_{\Tilde{n}_j > 2})
\end{align*}
For $i = 1,\dots,l-1$ we even have equality. Summing over $i = 1,\dots,L$ gives
\begin{align*}
    \mathcal{R}(\mathbf{n}) \leq \mathcal{R}(\Tilde{\mathbf{n}}),
\end{align*}
which shows that swapping a hidden layer with $2$ neurons to the end can only increase the expressive power. This was the first part of \Cref{corollary: R(n) in 1 dimension bottleneck}. \\

For the second part we consider the case that $n_i \geq 3$ for $i = 1,\dots,l-1$. Then we have
\begin{align*}
    \mathcal{R}(\mathbf{n}) &= 1 + \sum_{i=1}^L n_i \prod_{j=1}^{i-1} (n_j + \mathbbm{1}_{n_j > 2}) \\
    &= 1 + \sum_{i=1}^{l-1} n_i \prod_{j=1}^{i-1} (n_j + \mathbbm{1}_{n_j > 2}) + \sum_{i=l}^L n_i \prod_{j=1}^{i-1} (n_j + \mathbbm{1}_{n_j > 2}) \\
    &= \prod_{i=1}^{l} (n_i + 1) + \sum_{i=l}^L n_i \prod_{j=1}^{i-1} (n_j + \mathbbm{1}_{n_j > 2}).
\end{align*}
Hence, the first $l-1$ layers can be swapped arbitrarily without effecting the expressive power.
\end{proof}

The second consequence of \Cref{theorem: R(n) in 1 dimension} is that we can find the optimal network architecture with respect to this expressive power.

\begin{proof}[Proof of \Cref{corollary: R(n) in 1 dimension optimal network architecture}]
First of all, we can swap all layers with $2$ neurons to the end by \Cref{corollary: R(n) in 1 dimension bottleneck}. There is no point in having a layer with $n \geq 6$ neurons because a layer with $n-3$ and a layer with $3$ neurons will produce more linear regions due to
\begin{align*}
    (3+1)((n-3)+1) \geq 6+1.
\end{align*}
Similarly, a layer with $5$ neurons should be split into a layer with $2$ and a layer with $3$ neurons. Layers with $4$ neurons can be replaced by two layers with $2$ neurons each. \\

Hence, an "optimal" network consists of $L-k$ layers of $3$ neurons followed by $k$ layers of $2$ neurons. Plugging this into \Cref{theorem: R(n) in 1 dimension} we get that the maximal number of linear regions of such a network is equal to
\begin{align*}
    1 + \sum_{i=1}^{L-k} 3 \cdot 4^{i-1} + \sum_{i=1}^{k} 2^i \cdot 4^{L-k} &= 1 + (4^{L-k} - 1) + (2^{k+1}-2) 4^{L-k} \\
    &= (2^{k+1}-1) 4^{L-k}
\end{align*}.
This is monotonically decreasing in $k$, so $k$ needs to be as small as possible.
\end{proof}

\newpage

\subsection{$n_0$-dimensional input}

In this section we turn our focus to arbitrary inputs instead of just $1$-dimensional input. If the neural network has just $1$ hidden layer, then the maximal number of linear regions is given by Zaslavsky's theorem as noted in \cite{lowerBoundRn}.

\begin{theorem}[Zaslavsky's theorem, \cite{zaslavskyTheorem}, Proposition 2.4]
\label{zaslavskyTheorem}
An arrangement of $m$ hyperplanes can cut $\mathbb{R}^d$ into at most $\sum_{i=0}^d {m \choose i}$ regions.
\end{theorem}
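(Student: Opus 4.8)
The plan is to prove the bound by a double induction on the number of hyperplanes $m$ and the dimension $d$, via the standard deletion--contraction recursion for hyperplane arrangements. Write $r(m,d)$ for the maximal number of regions that $m$ hyperplanes can cut $\mathbb{R}^d$ into. The two base cases are immediate: $r(0,d) = 1$ for every $d$, since the empty arrangement leaves a single region, and $r(m,0) = 1$ for every $m$, since $\mathbb{R}^0$ is a single point that no hyperplane can subdivide.

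The heart of the argument is the recursion
\begin{equation*}
    r(m,d) \leq r(m-1,d) + r(m-1,d-1).
\end{equation*}
To establish it, I would fix an arrangement of $m$ hyperplanes achieving the maximum and single out one of them, say $H$. Deleting $H$ leaves $m-1$ hyperplanes, which cut $\mathbb{R}^d$ into at most $r(m-1,d)$ regions. Reinserting $H$, the number of genuinely new regions it creates is exactly the number of pieces into which $H$ itself is subdivided by its intersections $H \cap H_i$ with the remaining $m-1$ hyperplanes: each such piece of $H$ slices one existing region into two, adding precisely one region. Now $H \cong \mathbb{R}^{d-1}$, and the traces $H \cap H_i$ are affine subspaces of codimension one in $H$ whenever $H_i$ is not parallel to $H$ (parallel hyperplanes contribute nothing and only help the bound). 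Hence these traces form an arrangement of at most $m-1$ hyperplanes inside $\mathbb{R}^{d-1}$, which by definition produces at most $r(m-1,d-1)$ regions. Summing the two contributions yields the claimed recursion.

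It then remains to check that the proposed bound satisfies the same recursion and base cases. This is a direct application of Pascal's rule: splitting $\binom{m}{i} = \binom{m-1}{i} + \binom{m-1}{i-1}$ and reindexing gives
\begin{equation*}
    \sum_{i=0}^d \binom{m}{i} = \sum_{i=0}^d \binom{m-1}{i} + \sum_{i=0}^{d-1} \binom{m-1}{i},
\end{equation*}
whose two terms bound $r(m-1,d)$ and $r(m-1,d-1)$ respectively by the inductive hypothesis. Together with $r(0,d) = \binom{0}{0} = 1$ and $r(m,0) = \binom{m}{0} = 1$, this closes the induction.

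The main obstacle is the geometric bookkeeping in the recursion step, namely the clean claim that inserting $H$ adds exactly one new region per cell of the induced $(d-1)$-dimensional arrangement on $H$. Making this rigorous requires a short convexity and connectedness argument: a region of the smaller arrangement is an intersection of half-spaces, hence convex, so $H$ meets it in a connected convex set and splits it into at most two pieces; it splits it into exactly two precisely when $H$ passes through its interior, which happens if and only if the corresponding cell of the trace arrangement on $H$ lies in that region. Everything else is the routine Pascal's-rule verification above.
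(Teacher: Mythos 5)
Your proof is correct. There is, however, nothing in the paper to compare it against: the paper never proves this statement, but imports it wholesale from the literature (\cite{zaslavskyTheorem}, Proposition 2.4) and uses it as a black box in the proofs of \Cref{subspace Zaslavsky} and \Cref{theorem: upper bound R(n)}. What you have written is the classical elementary argument (the deletion--restriction induction, going back to Schl\"afli and Buck): delete a hyperplane $H$, bound the regions of the remaining arrangement by $r(m-1,d)$, and observe that reinserting $H$ creates exactly one new region per cell of the trace arrangement on $H \cong \mathbb{R}^{d-1}$, after which Pascal's rule closes the induction. Your treatment of the two standard pitfalls is sound: the convexity/connectedness argument shows each old region is split into at most two pieces, with the pieces in bijection with the cells of the trace arrangement, and parallel or coincident traces only lower the count (formally one needs the trivial monotonicity remark that an arrangement of fewer than $m-1$ hyperplanes in $\mathbb{R}^{d-1}$ also has at most $r(m-1,d-1)$ regions, e.g.\ by adding generic hyperplanes). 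It is worth noting that Zaslavsky's actual theorem is stronger than what is stated here: it gives the exact number of regions of any arrangement via the M\"obius function of its intersection poset, and the binomial sum $\sum_{i=0}^d {m \choose i}$ is the generic (maximal) case. Your induction proves the upper bound directly, without any of that machinery, which makes the paper's quantitative results self-contained at essentially no cost.
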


The application of \Cref{zaslavskyTheorem} in the case of $1$ hidden layer can be explained quite easily: The input of each neuron is an affine function. The neuron switches from inactive to active when the input crosses $0$, which defines a hyperplane. Since the output is a linear combination of those neurons, the linear regions will be defined by those hyperplanes. This provides an exact formula for $\mathcal{R}(n_0,n_1,1)$.

\begin{corollary}[\cite{lowerBoundRn}, Proposition 1]
We have
\begin{equation*}
    \mathcal{R}(n_0,n_1,1) = \sum_{i=0}^{\min\{n_0,n_1\}} {n_1 \choose i}.
\end{equation*}
\end{corollary}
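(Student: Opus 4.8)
The plan is to realise the linear regions of a single-hidden-layer network as a coarsening of a hyperplane arrangement and then read off both bounds from Zaslavsky's theorem. A network of design $(n_0,n_1,1)$ computes a function of the form
\[
    f(x) = c_0 + \sum_{k=1}^{n_1} c_k \ReLU(\langle a_k, x\rangle + b_k),
\]
with $a_k \in \mathbb{R}^{n_0}$ and $b_k,c_k \in \mathbb{R}$. The $k$th neuron switches between active and inactive exactly when $x$ crosses the hyperplane $H_k = \{x : \langle a_k, x\rangle + b_k = 0\}$. On each cell of the arrangement of $H_1,\dots,H_{n_1}$ the set $S$ of active neurons is constant, so $f$ is affine there with gradient $\sum_{k\in S} c_k a_k$. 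Consequently every linear region of $f$ is a union of arrangement cells.

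For the upper bound I would simply observe that the number of linear regions is at most the number of cells of the arrangement, which by \Cref{zaslavskyTheorem} is at most $\sum_{i=0}^{n_0} \binom{n_1}{i}$. Since $\binom{n_1}{i}=0$ whenever $i>n_1$, this quantity equals $\sum_{i=0}^{\min\{n_0,n_1\}} \binom{n_1}{i}$, giving the claimed value as an upper bound on $\mathcal{R}(n_0,n_1,1)$.

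For the matching lower bound I would exhibit weights attaining this count. First choose $a_k$ and $b_k$ so that the hyperplanes $H_1,\dots,H_{n_1}$ are in general position; a generic arrangement makes Zaslavsky's bound sharp, so it then has exactly $\sum_{i=0}^{\min\{n_0,n_1\}} \binom{n_1}{i}$ cells. Second, choose all $c_k \neq 0$ (and $a_k \neq 0$). It remains to verify that no two cells merge into a single linear region: two cells sharing a facet on $H_k$ differ only in whether neuron $k$ is active, so their gradients differ by exactly $c_k a_k \neq 0$, and the affine pieces are genuinely distinct across every facet. Hence each cell is a separate linear region, and the number of linear regions equals the number of cells.

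The main obstacle is the lower bound, where one must secure two things at once: that the hyperplanes realise the maximal cell count, and that the output weights prevent neighbouring affine pieces from coinciding. The first point is the sharpness half of Zaslavsky's theorem, which the excerpt states only as an upper bound, so I would invoke the standard fact that generic arrangements attain equality (or prove it by induction on $n_1$, each new hyperplane in general position adding $\sum_{i=0}^{n_0-1}\binom{n_1-1}{i}$ new cells). The second point reduces entirely to the gradient-jump computation $c_k a_k \neq 0$ noted above, which is immediate once the weights are chosen nonzero.
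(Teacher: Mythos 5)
Your proposal is correct and follows essentially the same route as the paper: identify the linear regions with the cells of the hyperplane arrangement $\{\langle a_k,x\rangle+b_k=0\}$ and apply Zaslavsky's theorem (\Cref{zaslavskyTheorem}), which is exactly the argument the paper sketches before deferring the full statement to the cited reference. You additionally spell out the lower bound (generic position plus the gradient jump $c_k a_k \neq 0$ across facets), which the paper leaves to the citation; that is a faithful completion of the same approach rather than a different one.
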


\Cref{zaslavskyTheorem} is not only important in the case of $1$ hidden layer. We can also use it to examine the effects of $1$ additional hidden layer in a deeper network. This additional layer will split up existing linear regions into more linear regions, which can be estimated with the following lemma.

\begin{lemma}[\cite{upperBoundRn}, Lemma 4]
\label{subspace Zaslavsky}
For $W \in \mathbb{R}^{d \times m}$ and $b \in \mathbb{R}^d$ we consider the equation $Wx + b = 0$, which defines $m$ hyperplanes in $\mathbb{R}^d$. Those hyperplanes divide $\mathbb{R}^d$ into at most $\sum_{i=0}^{\rank(W)} {m \choose i}$ regions.
\end{lemma}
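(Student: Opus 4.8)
The plan is to reduce the $d$-dimensional arrangement to an $r$-dimensional one, where $r = \rank(W)$, and then invoke plain Zaslavsky (\Cref{zaslavskyTheorem}) in dimension $r$. The guiding observation is that all $m$ hyperplanes have normal vectors lying in a single $r$-dimensional subspace, so the arrangement is a \emph{cylinder} over a lower-dimensional arrangement and can therefore have no more regions than that lower-dimensional base.

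First I would write the hyperplanes as $H_k = \{x \in \mathbb{R}^d : \langle w_k, x\rangle + b_k = 0\}$ for $k = 1,\dots,m$, where the normal vectors $w_1,\dots,w_m$ are the rows of the weight matrix and therefore span a subspace $V$ with $\dim V = \rank(W) =: r$ (any zero rows contribute no genuine hyperplane and may be discarded, as they never increase the region count). I then split $\mathbb{R}^d = V \oplus V^\perp$ with $\dim V^\perp = d - r$. Since each $w_k \in V$, the value $\langle w_k, x\rangle$ is unchanged when $x$ is translated by any vector of $V^\perp$; hence every $H_k$ is invariant under translation by $V^\perp$, i.e. $H_k = \pi^{-1}(\bar H_k)$, where $\pi : \mathbb{R}^d \to V$ is the orthogonal projection and $\bar H_k \subseteq V$ is a hyperplane of $V$.

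The second step is to transfer the region count across $\pi$. Under the identification $\mathbb{R}^d \cong V \times V^\perp$ the complement $\mathbb{R}^d \setminus \bigcup_k H_k$ equals $(V \setminus \bigcup_k \bar H_k) \times V^\perp$. Because $V^\perp$ is a connected linear subspace, a product $U \times V^\perp$ has exactly as many connected components as the open set $U \subseteq V$. Thus the number of regions cut out in $\mathbb{R}^d$ equals the number of regions cut out by the $m$ hyperplanes $\bar H_1,\dots,\bar H_m$ in $V \cong \mathbb{R}^r$. Finally I would apply \Cref{zaslavskyTheorem} to these $m$ hyperplanes in $\mathbb{R}^r$, obtaining at most $\sum_{i=0}^r \binom{m}{i} = \sum_{i=0}^{\rank(W)} \binom{m}{i}$ regions, which is the claim.

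I expect the only genuinely delicate point to be the component-counting in the cylinder step: one must verify that projecting along $V^\perp$ induces a bijection between the connected components of the two complements, and this is exactly where the connectedness of $V^\perp$ (hence of each fibre $\pi^{-1}(\mathrm{pt})$) is used. The reduction from $W \in \mathbb{R}^{d\times m}$ to the span of its normal vectors and the discarding of degenerate zero rows are minor bookkeeping that can only lower the count.
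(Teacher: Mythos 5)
Your proof is correct and complete. Note that the paper itself does not prove \Cref{subspace Zaslavsky} at all: it imports the statement verbatim from \cite{upperBoundRn}, Lemma 4, so there is no internal argument to compare yours against. Your route — splitting $\mathbb{R}^d = V \oplus V^{\perp}$ along the span $V$ of the normal vectors, observing that each $H_k$ is the preimage under orthogonal projection $\pi$ of a hyperplane $\bar H_k \subseteq V$, matching connected components of the complement across the product with the connected fibre $V^{\perp}$, and then applying \Cref{zaslavskyTheorem} in dimension $r = \rank(W)$ — is the standard essentialization argument for affine arrangements, and each step is sound, including the bookkeeping for zero rows (which only lowers the count) and the observation that repeated or fewer than $m$ hyperplanes in $V$ only weaken the bound in the right direction.
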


\Cref{subspace Zaslavsky} emphasizes that the rank of a linear region plays a crucial role. A higher rank will cause a higher number of linear regions later on. Therefore, we would like to estimate how many linear regions of a certain rank are possible.

\begin{definition}[\cite{upperBoundRn}, Section 2]
We say that a neuron is \textit{active} if its input is $>0$. The \textit{activation set} is the set of active neurons.
\end{definition}

The rank of a linear region is bounded by the size of the activation set of the corresponding layer. Thus, we will try to estimate that quantity instead.

\begin{lemma}
\label{activation patterns 1 layer}
Consider a neural network with $d$ input neurons and $1$ hidden layer with $n$ neurons. The number of linear regions with activation set of size $\geq n-m$ is bounded by
\begin{equation*}
    \sum_{j=0}^m f_{j,d}(n),
\end{equation*}
where
\begin{equation*}
    f_{j,d}(n) =
    \begin{cases}
    {n \choose j} &\text{for $j < d$,} \\
    {n-2j+2d-1 \choose d-1} + {n-2j+2d-2 \choose d-1} &\text{for $j \geq d$.}
    \end{cases}
\end{equation*}
In particular, we have $f_{j,d}(n) = 0$ for $j > \frac{n+\min\{d,n\}}{2}$.
\end{lemma}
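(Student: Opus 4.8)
The plan is to reinterpret a single hidden layer as an oriented hyperplane arrangement and to count its cells by how many hyperplanes lie on their negative side. Each of the $n$ neurons has input $\langle w_i,x\rangle+b_i$, whose vanishing locus is a hyperplane $H_i\subseteq\mathbb{R}^d$, oriented so that the neuron is active on the positive side. The linear regions of the layer are exactly the cells (connected sign-constant open regions) of $\{H_1,\dots,H_n\}$, and a cell has activation set of size $\ge n-m$ iff at most $m$ of the $H_i$ lie on its negative side; call this number the \emph{negativity} of the cell. Writing $a_d(n,j)$ for the number of cells of negativity exactly $j$, the claim becomes $\sum_{j=0}^{m}a_d(n,j)\le\sum_{j=0}^{m}f_{j,d}(n)$.

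First I would reduce to an arrangement in general position. The sign vector of a cell is constant on its interior, so under a small generic perturbation of the coefficients every realised sign vector survives; hence perturbing to general position cannot decrease the number of cells of negativity $\le m$, and it suffices to prove the bound there. Two easy observations handle the shallow part: since a cell is determined by the set of hyperplanes on its negative side, the cells of negativity $j$ inject into the $j$-subsets of $\{1,\dots,n\}$, giving the unconditional bound $a_d(n,j)\le\binom{n}{j}$. In particular for $j<d$ we already have $a_d(n,j)\le f_{j,d}(n)$, and for $n\le d$ all $2^n$ sign vectors occur, which settles the base cases with equality.

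The core is a deletion--restriction recursion. Removing one hyperplane $H$ leaves an arrangement $\mathcal{A}'$ of $n-1$ hyperplanes in $\mathbb{R}^d$; reinserting $H$ leaves untouched every cell of $\mathcal{A}'$ it misses and splits every cell it crosses into a positive piece (same negativity) and a negative piece (negativity $+1$). The crossed cells of $\mathcal{A}'$ correspond bijectively to the cells of the induced $(n-1)$-hyperplane arrangement on $H\cong\mathbb{R}^{d-1}$, and a crossed cell and its induced cell have equal negativity (lying on the negative side of $H_i$ inside $H$ is the same condition as in $\mathbb{R}^d$). Summing over negativity $\le m$, the positive pieces contribute the cells of $\mathcal{A}'$ of negativity $\le m$ that are not entirely negative of $H$, while the negative pieces contribute the induced cells of negativity $\le m-1$ together with the entirely-negative $\mathcal{A}'$-cells of negativity $\le m-1$; the entirely-negative overcount telescopes into $-\#\{\text{entirely negative cells of negativity exactly }m\}\le 0$. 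This yields the clean inequality
\begin{align*}
\sum_{j=0}^{m}a_d(n,j)\le \#\{\text{cells of }\mathcal{A}'\text{ of negativity}\le m\}+\#\{\text{induced cells of negativity}\le m-1\}.
\end{align*}

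Applying the induction hypotheses on $n$ (for $\mathcal{A}'$ in $\mathbb{R}^d$) and on $d$ (for the induced arrangement in $\mathbb{R}^{d-1}$, with budget $m-1$), the right-hand side is at most $\sum_{j\le m}f_{j,d}(n-1)+\sum_{j\le m-1}f_{j,d-1}(n-1)$, leaving the purely combinatorial identity $f_{j,d}(n)=f_{j,d}(n-1)+f_{j-1,d-1}(n-1)$, which I would verify by Pascal's rule separately in the ranges $j<d$, $j=d$ and $j>d$ (the two-term shape of $f_{j,d}$ for $j\ge d$ is exactly what makes the seam at $j=d$ close). The base case $d=1$ I would prove directly: along a left-to-right sweep the negativity changes by $\pm 1$ at each breakpoint and the initial negativity equals the total number of descents, which pins the number of intervals of negativity $\le m$ at $\min(2m+1,\,n+1)=\sum_{j\le m}f_{j,1}(n)$. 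Finally the vanishing $f_{j,d}(n)=0$ for $j>\tfrac{n+\min\{d,n\}}{2}$ is immediate from the supports of the two binomial coefficients. The main obstacle is the deletion step: getting the negativity shift and, above all, the telescoping cancellation of the entirely-negative cells exactly right, since this is precisely where the bounded-region correction (the gap between $f_{j,d}$ and $\binom{n}{j}$ for $j\ge d$) is produced.
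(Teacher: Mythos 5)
Your proof is correct and takes essentially the same route as the paper's: a double induction that removes one hyperplane (the paper inserts one), identifies the crossed cells with the cells of the induced arrangement on that hyperplane, and closes the recursion $\sum_{j\le m} f_{j,d}(n-1)+\sum_{j\le m-1} f_{j,d-1}(n-1)\le\sum_{j\le m} f_{j,d}(n)$ with Pascal's rule. The differences are only matters of polish: your explicit general-position reduction, your exact telescoping of the entirely-negative cells (the paper simply over-counts these), and your $d=1$ base case via the descent count (negativity changes by $\pm 1$ and starts at the number of descents, confining low-negativity cells to a window of $2m+1$ indices) are tidier than the paper's corresponding steps, which handle $d=1$ by an informal swapping argument.
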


\begin{proof}
We use induction on $d$. \\

\textit{Base Case:} In the case $d=1$ the problem is equivalent to having $n$ points on the real line, which all have an active and an inactive side. Obviously, it would be pointless if two neurons had the same breakpoint, so we can assume that the points are distinct. The $n$ points split the real line into $n+1$ regions and for each region we count the number of active points. \\

If there are two points whose active sides do not intersect, we can just swap them. This can not decrease the counter of any region. Hence, we can assume that we are in the following situation. On the left hand side of the real line we have $k$ points that have there active sides on their right. And on the right hand side there are $n-k$ points that have their active sides on their left. \\

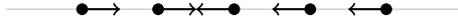
\begin{figure}[h]
    \centering
    \begin{tikzpicture}
    \draw[gray!50] (0,0) -- (6,0);
    \filldraw[black] (1,0) circle (2pt);
    \draw[black, thick, ->] (1,0) -- (1.5,0);
    \filldraw[black] (2,0) circle (2pt);
    \draw[black, thick, ->] (2,0) -- (2.5,0);
    \filldraw[black] (3,0) circle (2pt);
    \draw[black, thick, ->] (3,0) -- (2.5,0);
    \filldraw[black] (4,0) circle (2pt);
    \draw[black, thick, ->] (4,0) -- (3.5,0);
    \filldraw[black] (5,0) circle (2pt);
    \draw[black, thick, ->] (5,0) -- (4.5,0);
    \end{tikzpicture}
    \caption{"Optimal" configuration for $d=1$ and $n=5$}
\end{figure}

All of the points are active in the linear region after the $k$th point. The number of active points decreases by $1$ every time you go one region to the right (and the same holds true for left). Thus, it is optimal to have the same number of points facing right and facing left, which gives exactly the claimed upper bound. \\

\textit{Induction Step:} Let's assume we have already proven our result for $d$ input neurons. Now, we use induction on $n$ to prove it for $d+1$. For $n < d$ the result is clear because there are only
\begin{equation*}
    f_{j,d}(n) = {n \choose j} = {n \choose n-j}
\end{equation*}
ways to choose the $n-j$ neurons for the activation set. In the case $n = d$ it is easy to prove that we still have $f_{j,d}(n) = {n \choose j}$ for all $j$, so the result is true. This will serve as our base case for the induction on $n$. \\

For the induction step we need the following observations. All regions are convex because they are intersections of convex regions. If a new hyperplane intersects an existing region with activation set of size $k$, it will produce two regions with activation sets of size $k$ and $k+1$, respectively. We identify the region with activation set of size $k+1$ with the old region and the other one with the $d$-dimensional region on the new hyperplane. The activation sets of the untouched regions either stay the same or increase by $1$ if they are on the right side of the new hyperplane. \\

\begin{figure}[h]
    \centering
    \begin{tikzpicture}[scale = 0.6]
    \draw[gray!50] (1,6) -- (3,0);
    \draw[gray!50] (4,6) -- (2,0);
    \draw[gray!50] (0,5) -- (6,3);
    \draw[black] (0,2) -- (6,4);
    \draw[black, ->] (3,3) -- (2.8,3.6);
    \node at (0.5,3.5) {+1};
    \node at (2.3,3.5) {+1};
    \node at (3.5,3.5) {+1};
    \node at (2.3,5.3) {+1};
    \node at (0.5,5.3) {+1};
    \node at (4.5,4.8) {+1};
    \end{tikzpicture}
    \caption{Adding a new hyperplane}
\end{figure}
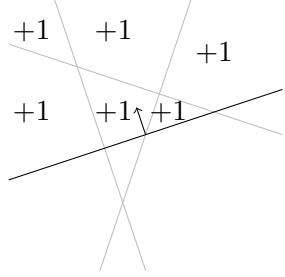

Hence, for every old region with activation set of size $k$ there is at most $1$ region with activation set of size $k+1$. Additionally, we can count the other regions using the $d$-dimensional result on the new hyperplane. Thus, the number of linear regions with activation set of size $\geq (n+1) - m$ is bounded by
\begin{align*}
    &\sum_{j=0}^m f_{j,d+1}(n) + \sum_{j=0}^{m-1} f_{j,d}(n) \\
    &= \sum_{j=0}^m f_{j,d+1}(n) + \sum_{j=1}^m f_{j-1,d}(n) \\
    &= \sum_{j=0}^d {n \choose j} + \sum_{j=d+1}^m {n-2j+2d+1 \choose d} + {n-2j+2d \choose d} \\
    &+ \sum_{j=1}^d {n \choose j-1} + \sum_{j=d+1}^m {n-2j+2d+1 \choose d-1} + {n-2j+2d \choose d-1} \\
    &= \sum_{j=0}^d {n+1 \choose j} + \sum_{j=d+1}^m {n-2j+2d+2 \choose d} + {n-2j+2d+1 \choose d} \\
    &= \sum_{j=0}^m f_{j,d+1}(n+1),
\end{align*}
where we used Pascal's rule in the penultimate step.
\end{proof}

With this in mind we have all the necessary ingredients to prove a general upper bound for $\mathcal{R}(\mathbf{n})$. The strategy is to estimate the effect of each additional layer using \Cref{subspace Zaslavsky} and \Cref{activation patterns 1 layer}.

\begin{proof}[Proof of \Cref{theorem: upper bound R(n)}]
Each additional layer partitions the linear regions from the previous layer. If the new layer has $n_l$ neurons, a region from the previous layer might be cut into smaller regions by up to $n_l$ hyperplanes. Hence, we try to recursivley bound the number of smaller regions within another region. Let $R(l,d)$ be the maximal number subregions obtainable with the layers $l,l+1,\dots,L$ from a region with $d$-dimensional image. By \Cref{subspace Zaslavsky} we have
\begin{equation*}
    R(L,d) = \sum_{j=0}^{\min\{n_L,d\}} {n_L \choose j} = \sum_{j=0}^{\min\{n_L,d\}} f_{j,d}(n_L).
\end{equation*}
Furthermore, we have
\begin{equation*}
    R(l,d) \leq \sum_{j=0}^{n_l} N_{j,d,n_l} R(l+1,\min\{j,d\}),
\end{equation*}
where $N_{j,d,n_l}$ is the number of subregions in the next layer with activation set of size $j$ in the optimal partition that leads to the highest expressive power. Since $R(l+1,\min\{j,d\})$ is increasing in $j$, we get
\begin{equation*}
    R(l,d) \leq \sum_{j=0}^{n_l} f_{j,d}(n_l) R(l+1,\min\{j,d\})
\end{equation*}
using \Cref{activation patterns 1 layer}. $f_{j,d}(n_l)$ is $0$ for $j > \frac{n_l + d}{2}$, so it suffices to take the sum for 
\begin{align*}
    j \leq \min\left\{\frac{n_l + d}{2}, n_l\right\} = \frac{n_l + \min\{n_l, d_l\}}{2}.
\end{align*}
Thus, we get that $R(1,n_0)$ is less than or equal to
\begin{align*}
    \sum_{j_1 \leq \frac{n_1 + \min\{n_1, d_1\}}{2}} f_{j_1,d_1}(n_1) \sum_{j_2 \leq \frac{n_2 + \min\{n_2, d_2\}}{2}} f_{j_2,d_2}(n_2) \dots \sum_{j_L = 0}^{\min\{n_L,d_L\}} f_{j_L,d_L}(n_L).
\end{align*}
By rewriting this expression we get the desired upper bound.
\end{proof}

As already mentioned this is a sharper version of the upper bound
\begin{align*}
    \sum_{j_1 \leq \min\{n_1, d_1\}} {n_1 \choose j_1} \sum_{j_2 \leq \min\{n_2, d_2\}} {n_2 \choose j_2} \dots \sum_{j_L \leq \min\{n_L,d_L\}} {n_l \choose j_l},
\end{align*}
which was given in \cite{upperBoundRn}. It is not obvious that the expression given in \Cref{theorem: upper bound R(n)} is smaller, so we will give a quick proof.

\begin{proof}
For $n \geq d$ we can use the Hockey-stick identity to get
\begin{align*}
    \sum_{d \leq j \leq \frac{n+d}{2}} f_{j,d}(n) &= \sum_{d \leq j \leq \frac{n+d}{2}} {n-2j+2d-1 \choose d-1} + {n-2j+2d-2 \choose d-1} \\
    &= \sum_{i=d-1}^{n-1} {i \choose d-1} \\
    &= {n \choose d}.
\end{align*}
This implies
\begin{align*}
    \sum_{j_l \leq \frac{n_l+\min\{n_l,d_l\}}{2}} f_{j_l,d_l}(n_l) \leq \sum_{j_l \leq \min\{d_l,n_l\}} {n_l \choose j_l}.
\end{align*}
Since the inner sums of
\begin{align*}
    \sum_{j_1 \leq \frac{n_1 + \min\{n_1, d_1\}}{2}} f_{j_1,d_1}(n_1) \sum_{j_2 \leq \frac{n_2 + \min\{n_2, d_2\}}{2}} f_{j_2,d_2}(n_2) \dots \sum_{j_L = 0}^{\min\{n_L,d_L\}} f_{j_L,d_L}(n_L).
\end{align*}
are monotonically decreasing in $j_l$, we can apply this estimate to every sum to get an upper bound of
\begin{align*}
    &\sum_{j_1 \leq \min\{n_1, d_1\}} {n_1 \choose j_1} \sum_{j_2 \leq \min\{n_2, d_2\}} {n_2 \choose j_2} \dots \sum_{j_L \leq \min\{n_L,d_L\}} {n_l \choose j_l}. \qedhere
\end{align*}
\end{proof}

\newpage

\section{Estimates for $\mathcal{E}(\mathbf{n})$}
\label{Section 3}

\subsection{$1$-dimensional input}

This section deals with a different kind of expressive power. Instead of looking at the maximal possible number of linear regions, we are interested in calculating any function with a certain number of linear regions. As before, let us assume that $n_{L+1} = 1$. We begin with the case of input dimension $1$. \\

It turns out that width $2$ is not enough to calculate arbitrary piecewise affine functions. This is because a layer of $2$ enhanced neurons can only calculate functions that are either monotone or bounded from above or below due to $\mathcal{R}^m(2) = 2$ (see proof of \Cref{theorem: R(n) in 1 dimension}). \\

Consider the set $S$ of functions that do not satisfy either of those properties. Let $T$ be the set of width $2$ networks that calculate a function in $S$. If $T$ is not empty, we can pick a network of minimum depth in $T$. Let $h$ be the function calculated by this network. Since $h$ is not bounded from above or below, the last layer of the network has to calculate a strictly monotone function. But this means that we can remove the last layer and again get a network in $T$, which is a contradiction to the minimality of depth. Hence, networks of width $2$ can not calculate functions in $S$. \\

Interestingly, networks of width $3$ are capable of computing any piecewise affine function. But it is significantly easier to prove a result for width $\geq 4$, where we can use a different strategy. We will start the bound in the case of width $3$ and present the case $\geq 4$ later on.

\begin{proof}[Proof of \Cref{theorem: E(n) in 1 dimension with width 3}]
We use induction on $L$. \\

\textit{Initial Case:} $L=1$. Pick an arbitrary piecewise affine function with at most $3$ linear regions. Let $a_1$, $a_2$ and $a_3$ be the slopes of the linear regions from left to right. If the piecewise affine function has less than $3$ linear regions we can for example choose $a_1 = a_2$, so this is no restriction. Let $(x_1,y_1)$ and $(x_2,y_2)$ be the breakpoints from left to right. Then the function can be represented by
\begin{align*}
    y_1 - \sgn(a_1) \max\{0,-|a_1|(x-x_1)\} + \sgn(a_2) \max\{0,|a_2|(x-x_1)\} \\
    + \sgn(a_3-a_2) \max\{0,|a_3-a_2|(x-x_2)\}.
\end{align*}
Thus, it can be calculated by a neural network with $1$ hidden layer consisting of $3$ neurons. \\

\textit{Induction step:} $L-1 \rightarrow L$. For any piecewise affine function with at most $L+1$ linear regions we can just add a layer that calculates the identity and use the induction hypothesis. It remains to prove that any piecewise affine function with $L+2$ linear regions can be calculated. Pick any such function. Let $a_1,a_2,\dots,a_{L+2}$ be the slopes and $(x_1,y_1),(x_2,y_2),\dots,(x_{L+1},y_{L+1})$ be the breakpoints from left to right. Define
\begin{equation*}
    y_0 = 
    \begin{cases}
    \infty &\text{if } a_1 < 0, \\
    y_1 &\text{if } a_1 = 0, \\
    -\infty &\text{if } a_1 > 0.
    \end{cases}
\end{equation*}
This is supposed to be the "height of the breakpoint at $-\infty$". Similarly, we can define $y_{L+2}$ in terms of $a_{L+2}$ and $y_{L+1}$. \\

\textit{Case 1:} Assume that there is an $i$ such that $y_{i-1} \leq y_i \leq y_{i+1}$ or $y_{i-1} \geq y_i \geq y_{i+1}$. We only consider $y_{i-1} \leq y_i \leq y_{i+1}$ because the other case is pretty similar. This implies $a_i, a_{i+1} \geq 0$. \\

\textit{Case 1.1:} $a_{i+1} > 0$. Let $g$ be a piecewise affine function with only $L+1$ linear regions constructed in the following way: \\

\begin{figure}[h]
    \centering
    \begin{tikzpicture}
    \begin{axis}[no markers, ymin=-1, ymax=2]
    \addplot+[domain=-0.3:1.3,samples=100,color=blue]{5-3/2*max(0,7*x-1)+max(0,7*x-3)+max(0,7*x-4)-max(0,-7*x+5)-max(0,7*x-5)};
    \addplot+[domain=-0.3:1.3,samples=100,color=blue, dashed ]{5-3/2*max(0,7*x-1)+max(0,7*x-3)+max(0,7*x-3)-max(0,-7*x+5)-2*max(0,7*x-13/3)+max(0,7*x-5)};
    \end{axis}
    \end{tikzpicture}
    \caption{Construction of $g$}
\end{figure}
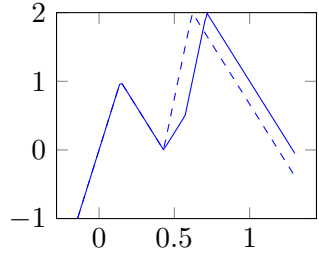

$g$ coincides with $f$ until $(x_{i-1},y_{i-1})$. Then $g$ has slope $a_{i+1}$ and keeps it until it reaches height $y_{i+1}$, which will take place at
\begin{align*}
    \hat{x}_{i+1} &\coloneqq x_{i-1} + \frac{y_{i+1}-y_{i-1}}{a_{i+1}} \\
    &= x_{i-1} + \frac{y_i - y_{i-1}}{a_{i+1}} + \frac{y_{i+1}-y_i}{a_{i+1}} \\
    &= x_{i-1} + \frac{a_i}{a_{i+1}} (x_i-x_{i-1}) + (x_{i+1}-x_i).
\end{align*}
From there onwards $g$ continues like $f$ after its breakpoint at $(x_{i+1},y_{i+1})$. In formulas that means
\begin{align*}
    g(x) =
    \begin{cases}
    f(x) &\text{for } x < x_{i-1}, \\
    y_{i-1} + a_{i+1}(x-x_{i-1}) &\text{for } x_{i-1} \leq x < \hat{x}_{i+1}, \\
    f(x - \hat{x}_{i+1} + x_{i+1}) &\text{for } \hat{x}_{i+1} \leq x.
    \end{cases}
\end{align*}
Furthermore, it is easy to verify that
\begin{equation*}
    h(x) \coloneqq
    \begin{cases}
    x &\text{for } x < x_{i-1}, \\
    x_{i-1} + \frac{a_i}{a_{i+1}} (x-x_{i-1}) &\text{for } x_{i-1} \leq x < x_i, \\
    x + \hat{x}_{i+1} - x_{i+1} &\text{for } x_i \leq x.
    \end{cases}
\end{equation*}
is a piecewise affine function with at most $3$ linear regions. A straightforward calculation now shows that $f = g \circ h$. But $g$ has only $L+1$ linear regions, so it can be calculated by a neural network with $L-1$ layers. $h$ has at most $3$ linear regions and can therefore be calculated by $1$ additional layer, which finishes Case 1.1. \\

\textit{Case 1.2:} $a_{i+1} = 0$. We define
\begin{equation*}
    g(x) \coloneqq
    \begin{cases}
    f(x) &\text{for } x < x_i, \\
    f(x - x_i + x_{i+1}) &\text{for } x_i \leq x
    \end{cases}
\end{equation*}
and
\begin{equation*}
    h(x) \coloneqq
    \begin{cases}
    x &\text{for } x < x_i, \\
    x_i &\text{for } x_i \leq x < x_{i+1}, \\
    x + x_i - x_{i+1} &\text{for } x_{i+1} \leq x.
    \end{cases}
\end{equation*}
It is immediate to see that $f = g \circ h$ and the same reasoning as above finishes Case 1.2. \\

\textit{Case 2:} Assume that there is an $i$ such that $y_{i-1} \leq y_{i+1} \leq y_i \leq y_{i+2}$ or $y_{i-1} \geq y_{i+1} \geq y_i \geq y_{i+2}$ as well as $a_j \neq 0$ for $j \in \{i,i+1,i+2\}$. \\

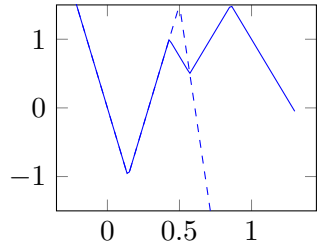
\begin{figure}[h]
    \centering
    \begin{tikzpicture}
    \begin{axis}[no markers, ymin=-1.5, ymax=1.5]
    \addplot+[domain=-0.3:1.3,samples=100,color=blue]{-7*x+2*max(0,7*x-1)-3/2*max(0,7*x-3)+max(0,7*x-4)-max(0,7*x-6)};
    \addplot+[domain=-0.3:1.3,samples=100,color=blue, dashed ]{-7*x+2*max(0,7*x-1)-3*max(0,7*x-7/2)};
    \end{axis}
    \end{tikzpicture}
    \caption{Construction of $g$}
\end{figure}

Similar to Case 1 we only consider $y_{i-1} \leq y_{i+1} \leq y_i \leq y_{i+2}$ and define
\begin{align*}
    g(x) \coloneqq
    \begin{cases}
    f(x) &\text{for } x < x_{i-1}, \\
    y_{i-1} + a_i (x-x_{i-1}) &\text{for } x_{i-1} \leq x < \hat{x}_{i+2}, \\
    f(\frac{a_i}{a_{i+2}} (x-\hat{x}_{i+2})+x_{i+2}) &\text{for } \hat{x}_{i+2} \leq x
    \end{cases}
\end{align*}
and
\begin{equation*}
    h(x) \coloneqq
    \begin{cases}
    x &\text{for } x < x_i, \\
    x_i + \frac{a_{i+1}}{a_i}(x-x_i) &\text{for } x_i \leq x < x_{i+1}, \\
    \frac{a_{i+2}}{a_i} (x - x_{i+2}) + \hat{x}_{i+2} &\text{for } x_{i+1} \leq x,
    \end{cases}
\end{equation*}
where
\begin{align*}
    \hat{x}_{i+2} &\coloneqq x_{i-1} + \frac{y_{i+2}-y_{i-1}}{a_i} \\
    &= x_{i-1} + \frac{y_i-y_{i-1}}{a_i} + \frac{y_{i+1}-y_i}{a_i} + \frac{y_{i+2}-y_{i+1}}{a_i} \\
    &= x_{i-1} + (x_i-x_{i-1}) + \frac{a_{i+1}}{a_i} (x_{i+1}-x_i) + \frac{a_{i+2}}{a_i} (x_{i+2}-x_{i+1}) \\
    &= x_i + \frac{a_{i+1}}{a_i} (x_{i+1}-x_i) + \frac{a_{i+2}}{a_i} (x_{i+2}-x_{i+1}).
\end{align*}
Again, a straightforward calculation shows that $g$ and $h$ are continuous and $f = g \circ h$. Moreover, $g$ has $L$ linear regions and $h$ has at most $3$, which finishes Case 2. \\

It remains to show that we are always in one of those two cases, so let's assume the opposite. As we are not in Case 1, we have
\begin{align*}
    y_0 &< y_1 > y_2 < y_3 > y_4 < \dots \quad \text{or}\\
    y_0 &> y_1 < y_2 > y_3 < y_4 > \dots
\end{align*}
Wlog assume the first line holds, otherwise we can just flip all inequalities. Note that this implies $a_j \neq 0$ and that $y_{L+2}$ cannot be equal to $y_{L+1}$, so it must be $\infty$ or $-\infty$. For the same reason we have $y_0 = -\infty$. \\

Since we are also not in Case 2, we get $y_1 > y_3$ because otherwise we would have
\begin{equation*}
    -\infty = y_0 \leq y_2 \leq y_1 \leq y_3.
\end{equation*}
This implies $y_2 < y_4$ because otherwise we would have
\begin{equation*}
    y_1 \geq y_3 \geq y_2 \geq y_4.
\end{equation*}
By induction we get
\begin{align*}
    y_1 &> y_3 > y_5 > y_7 > \dots \quad \text{and} \\
    y_2 &< y_4 < y_6 < y_8 < \dots 
\end{align*}
If $L+2$ is even, we can conclude that
\begin{align*}
    y_2 < y_{L+2} < y_{L+1}.
\end{align*}
If $L+1$ is odd, we obtain
\begin{align*}
    y_1 > y_{L+2} > y_{L+1}.
\end{align*}
In particular, $y_{L+2}$ cannot be equal to $\infty$ or $-\infty$ either, which is a contradiction.
\end{proof}

In this proof it is worth noting that we always composed our intermediate result with another function, which was calculated by an extra layer. A less convoluted approach would be to just write the piecewise affine function as a sum and calculate the different summands separately. The downside of this idea is that we need $4$ neurons in each layer to store the input and intermediate result. The upside is that we can use all of the remaining neurons to construct new linear regions. We make these ideas precise in the proof of \Cref{theorem: E(n) in 1 dimension}.

\begin{proof}[Proof of \Cref{theorem: E(n) in 1 dimension}]
In each layer we use two neurons to save the input via
\begin{equation*}
    x = \max\{x,0\} + \max\{-x,0\}.
\end{equation*}
With the same trick we also store our intermediate results. This means that we need $4$ neurons in each layer to save data and the remaining neurons are used for calculations. \\

Observe that we can write any piecewise affine function $f$ with $k$ linear regions as the sum of outputs of $k$ different neurons. This can be done by just working through the breakpoints from left to right and adding the difference in slopes. \\

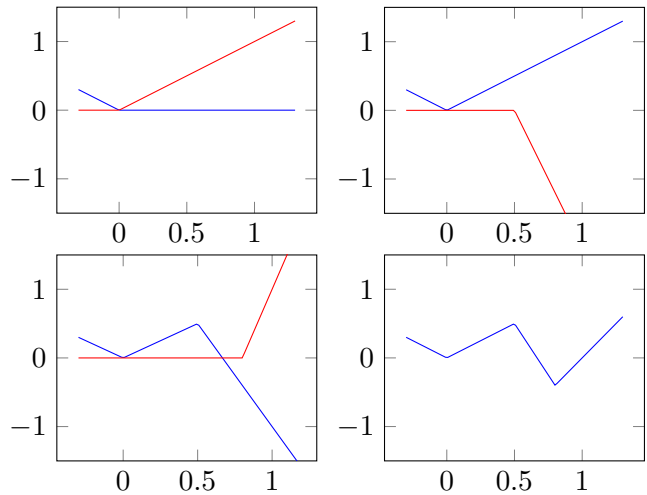
\begin{figure}[h]
    \centering
    \begin{tikzpicture}
    \begin{axis}[no markers, ymin=-1.5, ymax=1.5]
    \addplot+[domain=-0.3:1.3,samples=100,color=blue]{max(0,-x)};
    \addplot+[domain=-0.3:1.3,samples=100,color=red]{max(0,x)};
    \end{axis}
    \end{tikzpicture}
    \begin{tikzpicture}
    \begin{axis}[no markers, ymin=-1.5, ymax=1.5]
    \addplot+[domain=-0.3:1.3,samples=100,color=blue]{max(0,-x)+max(0,x)};
    \addplot+[domain=-0.3:1.3,samples=100,color=red]{-max(0,4*x-2)};
    \end{axis}
    \end{tikzpicture}
    \begin{tikzpicture}
    \begin{axis}[no markers, ymin=-1.5, ymax=1.5]
    \addplot+[domain=-0.3:1.3,samples=100,color=blue]{max(0,-x)+max(0,x)-max(0,4*x-2)};
    \addplot+[domain=-0.3:1.3,samples=100,color=red]{max(0,5*x-4)};
    \end{axis}
    \end{tikzpicture}
    \begin{tikzpicture}
    \begin{axis}[no markers, ymin=-1.5, ymax=1.5]
    \addplot+[domain=-0.3:1.3,samples=100,color=blue]{max(0,-x)+max(0,x)-max(0,4*x-2)+max(0,5*x-4)};
    \end{axis}
    \end{tikzpicture}
    \caption{Constructing the linear regions of $f$ from left to right}
\end{figure}

These neurons use $x$ as the input and their output is added to the intermediate result. Hence, the output will be the sum of the $k$ neurons, which is equal to $f$. Finally, we can also use the two neurons storing the intermediate result in the first layer for calculations, which explains the $+2$ in the final result.
\end{proof}

\newpage

\subsection{$n_0$-dimensional input}

For the case of $n_0$-dimensional input we use a similar strategy as in the proof of \Cref{theorem: E(n) in 1 dimension}: We try to write the piecewise affine function as a sum of simpler functions and calculate them individually. Our first step is to give a decomposition of an affine function defined on a simplex. It is well-known and was for example used in \cite{approximationSobolevFunctions}.

\begin{lemma}[\cite{approximationSobolevFunctions}, Section 2]
\label{barycentric decomposition}
Let $\Delta \subset \mathbb{R}^d$ be a simplex with vertices $x_0,\dots,x_d$ and $f: \mathbb{R}^d \to \mathbb{R}$ be any affine function. Then there are affine functions $f_0,\dots,f_d: \mathbb{R}^d \to \mathbb{R}$ such that
\begin{equation*}
    f = \sum_{i=0}^d f_i,
\end{equation*}
$f_i|_F$ only depends on $f|_F$ for any face $F$ of $\Delta$ and we have $f_i = 0$ on the face opposite to $x_i$.
\end{lemma}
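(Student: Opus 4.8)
The plan is to build the decomposition explicitly from the barycentric coordinates of $\Delta$. Since the vertices $x_0,\dots,x_d$ are affinely independent, there exist unique affine functions $\lambda_0,\dots,\lambda_d : \mathbb{R}^d \to \mathbb{R}$ with $\lambda_i(x_j) = \delta_{ij}$, and these satisfy $\sum_{i=0}^d \lambda_i \equiv 1$ because the left-hand side is affine and takes the value $1$ at each of the affinely independent points $x_0,\dots,x_d$. I would then simply set
\begin{equation*}
    f_i \coloneqq f(x_i)\,\lambda_i \qquad \text{for } i = 0,\dots,d,
\end{equation*}
so that all three required properties become consequences of elementary facts about affine functions.

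First I would verify the decomposition $f = \sum_{i=0}^d f_i$. Both sides are affine functions on $\mathbb{R}^d$, and at each vertex $x_j$ we have $\sum_{i=0}^d f(x_i)\lambda_i(x_j) = f(x_j)$. Two affine functions that agree on the $d+1$ affinely independent points $x_0,\dots,x_d$ coincide everywhere, so the identity holds. For the vanishing condition I would note that the face opposite to $x_i$ is $\conv\{x_j : j \neq i\}$, on whose vertices $\lambda_i$ is zero; being affine, $\lambda_i$ therefore vanishes on that whole face, and hence so does $f_i = f(x_i)\lambda_i$.

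The remaining property, that $f_i|_F$ depends only on $f|_F$ for every face $F$, is where I expect the only real care to be needed, since one must first pin down what the statement means and then argue by cases. I would fix a face $F = \conv\{x_j : j \in S\}$ and distinguish two situations. If $i \in S$, then $x_i$ is a vertex of $F$, so the scalar $f(x_i)$ is read off from $f|_F$, while $\lambda_i|_F$ is a fixed function that does not involve $f$ at all; thus $f_i|_F = f(x_i)\,\lambda_i|_F$ is determined by $f|_F$. If $i \notin S$, then every vertex of $F$ lies in $\{x_j : j \neq i\}$, so $F \subseteq \conv\{x_j : j \neq i\}$ is contained in the face opposite to $x_i$, giving $\lambda_i|_F \equiv 0$ and hence $f_i|_F \equiv 0$ irrespective of $f$. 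In both cases $f_i|_F$ is governed solely by $f|_F$, which completes the argument. Aside from this case distinction, every step reduces to the single principle that an affine function is fixed by its values on an affinely independent spanning set, so I anticipate no computational difficulties.
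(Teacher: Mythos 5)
Your proposal is correct and is essentially the same proof as the paper's: both define $f_i$ as $f(x_i)$ times the $i$th barycentric coordinate function and then read off all three properties from the basic facts about barycentric coordinates. The only cosmetic difference is that you verify $f=\sum_i f_i$ by agreement of affine functions at the $d+1$ vertices, while the paper uses that $f$ commutes with affine combinations, i.e.\ $f\left(\sum_i a_i x_i\right)=\sum_i a_i f(x_i)$.
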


\begin{proof}
The idea is to use the barycentric coordinates $(a_0,\dots,a_d)$ of $x$ with respect to $\Delta$, i.e. $\sum_{i=0}^d a_i = 1$ and
\begin{equation*}
    x = \sum_{i=0}^d a_i x_i.
\end{equation*}
The $a_i$ are uniquely characterised by these equations, so the functions
\begin{equation*}
    f_i(x) \coloneqq a_i f(x_i)
\end{equation*}
are affine. Note that $a_i = 0$ on the face opposite to $x_i$, so the same holds true for $f_i$. More generally, let $F$ be any face of $\Delta$ and $x \in F$. Then we get $a_i = 0$ if $x_i \not\in F$, so $f_i|_F$ only depends on $f|_F$. Finally, we have
\begin{equation*}
    f(x) = f\left(\sum_{i=0}^d a_i x_i\right) = \sum_{i=0}^d a_i f(x_i) = \sum_{i=0}^d f_i(x)
\end{equation*}
since $f$ is affine.
\end{proof}

Ideally, parts of our neural network would calculate the summands used in \Cref{barycentric decomposition} for every linear region and simply add them up. The problem is that the summands would have to be $0$ outside of the simplex, which is not possible because neural networks with ReLU as their activation function will always compute continuous functions. However, the summands are non-zero in one vertex of the simplex. The solution is to piece all summands together that are non-zero at one particular vertex $p$. This will give a function $f_p$ with the shape of a cone. $f_p$ is zero outside of the simplices adjacent to the vertex $p$. \\

It remains to calculate this function with the help of a neural network. We will do this by decomposing it into simpler functions once again. These functions are supposed to coincide with $f_p$ along one edge adjacent to $p$ up to an additive constant. Furthermore, they are $0$ along any other edge adjacent to $p$. The computation of those functions is encapsulated in the following lemma.

\begin{lemma}\label{lego lemma}
For $i=1,\dots,k$ let $S_i: a_i \cdot x = 0$ be $d-1$-dimensional hypersurfaces going through the origin. Assume that they are the faces of the cone
\begin{equation*}
    C \coloneqq \{x \in \mathbb{R}^d : a_i \cdot x \geq 0 \text{ for } i=1,\dots,k\}.
\end{equation*}
Let $v \in C^\mathrm{o}$, $L \coloneqq \{rv : r \geq 0\}$ and $g: L \to \mathbb{R}$ be any linear function. Define 
\begin{align*}
    C_i &\coloneqq \conv((C \cap S_i) \cup L).
\end{align*}
Let $f$ be the piecewise affine function with linear regions $C_1^\circ,\dots,C_k^\circ$ and $C^\mathrm{c}$, $f|_L = g$ and $f|_{C^\mathrm{c}} = 0$. Then $k$ layers consisting of $2$ neurons can calculate $f$ if you save the input somewhere else.
\end{lemma}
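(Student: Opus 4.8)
The plan is to first replace the geometric description of $f$ by an explicit formula, then by a single closed formula on all of $\mathbb{R}^d$, and finally to compute that formula with a depth-$k$ network. On each region $C_i$ the function $f$ is affine and vanishes on the face $C \cap S_i \subset S_i = \{a_i \cdot x = 0\}$; since $S_i$ is a hyperplane through the origin, the affine piece $f|_{C_i}$ must be a scalar multiple of $a_i \cdot x$. Matching the prescribed linear values along the interior ray $L$ pins down the scalar, giving $f|_{C_i}(x) = \gamma\, \phi_i(x)$, where $\gamma \coloneqq g(v)$ and $\phi_i(x) \coloneqq \frac{a_i \cdot x}{a_i \cdot v}$ (well defined since $v \in C^{\mathrm{o}}$ forces $a_i \cdot v > 0$). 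Note that each $\phi_i$ takes the value $r$ at the point $rv \in L$, so all the $\phi_i$ agree along $L$.

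Next I would establish the identity $f(x) = \gamma\, \ReLU\!\big(\min_i \phi_i(x)\big) = \gamma \min_i \ReLU(\phi_i(x))$, the second equality being the elementary fact $\max\{0, \min_i t_i\} = \min_i \max\{0, t_i\}$. For the first, the key geometric point is that on $C_i$ the minimum of the $\phi_l$ is attained by $\phi_i$: writing $x \in C_i$ as a nonnegative combination $\alpha w + \beta(rv)$ with $w \in C \cap S_i$ and $rv \in L$, one gets $\phi_i(x) = \beta r$ while $\phi_l(x) = \alpha \phi_l(w) + \beta r \geq \beta r$ for every $l$, because $\phi_l(w) \geq 0$ for $w \in C$. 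Hence $\min_l \phi_l = \phi_i$ on $C_i$, and $f = \gamma \min_l \phi_l$ on the whole cone $C = \bigcup_i C_i$, where $\min_l \phi_l \geq 0$. Outside $C$ some $a_i \cdot x < 0$ forces $\min_l \phi_l < 0$, so the $\ReLU$ cuts it to $0$, matching $f|_{C^{\mathrm{c}}} = 0$. I expect this identification, together with checking that the subcones $C_i$ really cover $C$ and overlap only in shared facets so that their interiors are exactly the linear regions of $f$, to be the main conceptual obstacle; everything after it is algebra.

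Finally I would realise $\gamma \min_i \ReLU(\phi_i)$ by a network using only two computational neurons per layer, keeping the input $x$ in separate stored neurons. The idea is to carry the nonnegative running minimum $M_j \coloneqq \min_{i \leq j} \ReLU(\phi_i(x))$ in a single coordinate and to update it in one layer via $M_{j+1} = \ReLU(\phi_{j+1}(x)) - \ReLU(\phi_{j+1}(x) - M_j)$, which a short case check (using $M_j \geq 0$) shows equals $\min(M_j, \ReLU(\phi_{j+1}(x)))$. Concretely, the first layer sets $M_1 = \ReLU(\phi_1(x))$, and layer $j+1$ computes its two neurons $\ReLU(\phi_{j+1}(x))$ and $\ReLU(\phi_{j+1}(x) - M_j)$, both affine in the stored input and the single carried value $M_j$, after which the affine map feeding the next layer forms their difference $M_{j+1}$ as the new carried value. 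After $k$ layers the carried value is $M_k = \min_i \ReLU(\phi_i)$, and the output affine map multiplies by $\gamma$ to produce $f$. Because the two subtractions live in the affine maps rather than in neurons, each face costs exactly one layer of two neurons, yielding the claimed $k$ layers.
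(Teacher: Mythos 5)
Your proposal is correct and takes essentially the same route as the paper's proof: after normalizing so that $a_i \cdot v = 1$ (your $\phi_i$), both arguments identify $f(x) = \gamma \max\{0, \min_i a_i \cdot x\}$ and then compute the minimum iteratively, one face per layer, using two neurons per layer while the input is stored elsewhere. The only difference is the two-neuron gadget — you commute the $\ReLU$ past the minimum and carry the nonnegative running minimum via $\min\{M, \ReLU(t)\} = \ReLU(t) - \ReLU(t-M)$, whereas the paper uses $\min\{a,b\} = \tfrac{1}{2}\bigl(a+b-\ReLU(a-b)-\ReLU(b-a)\bigr)$ and applies the final $\ReLU$ in the last layer — a cosmetic variation, not a different argument.
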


\begin{proof}
First of all, we scale the $a_i$ such that $a_i \cdot v = 1$. Observe that
\begin{equation*}
    f(x) = s \max\{0, \min\{a_i \cdot x : i=1,\dots,k\}\},
\end{equation*}
where $s \in \mathbb{R}$ is determined by the slope of $g$. This is because the linear region is determined by the index that minimizes $a_i \cdot x$. The boundaries of those linear regions are determined by $a_i \cdot x = 0$ and $a_i \cdot x = a_j \cdot x$, respectively. Since $v$ lies on the boundaries of the latter type, we can conclude that the function coincides with $f$. Next up, for $a,b \in \mathbb{R}$ we have
\begin{align*}
    \min\{a,b\} &= \frac{1}{2} (a+b-|a-b|) \\
    &= \frac{1}{2} (a+b-\max\{a-b,0\}-\max\{-a+b,0\})
\end{align*}
so the minimum of two numbers can be calculated with $1$ layer consisting of $2$ enhanced neurons if $a$ and $b$ are stored somewhere else. Observe that in our case $a$ and $b$ are a linear function of the input $x$. Using
\begin{equation*}
    \min\{a,b,c\} = \min\{\min\{a,b\},c\}
\end{equation*}
for $c \in \mathbb{R}$ we need $k-1$ layers consisting of $2$ neurons to calculate
\begin{align*}
    \min\{a_i \cdot x : i=1,\dots,k\}.
\end{align*}
Hence, $k$ layers consisting of $2$ neurons can calculate $f$.
\end{proof}

This shows that we can calculate the essential building blocks of any piecewise affine function. We just need to put them together in order to prove that we can calculate any piecewise affine function with a neural network.

\begin{proof}[Proof of \Cref{theorem: lower bound E(n)}]
Let $f$ be any piecewise affine function and $V$ be the number of vertices of the linear regions of $f$. Consider the graph of $f$, which is in $\mathbb{R}^{n_0+1}$. Extend the images of the linear regions in the graph to $n_0$-dimensional hyperplanes. These hyperplanes define at most ${k \choose n_0+1}$ intersection points because we get at most one point for any collection of $n_0+1$ hyperplanes. Each vertex of a linear region a projection of one of those intersection points onto $\mathbb{R}^{n_0}$, which means that
\begin{equation*}
    V \leq {k \choose n_0+1}.
\end{equation*}
Now, we triangulate the linear regions of $f$. Let $S$ be the number of $n_0$-dimensional simplices in the triangulation. Then we have
\begin{equation*}
    S \leq {V \choose n_0+1}.
\end{equation*}
For any vertex $p$ of the triangulation define
\begin{equation*}
    f_p = \sum_{\Delta} \mathbbm{1}_\Delta f_{p,\Delta},
\end{equation*}
where the sum runs over all simplices $\Delta$ adjacent to $p$ and $f_{p,\Delta}$ is taken from \Cref{barycentric decomposition}. These functions fit together to form a continuous piecewise affine function since the $f_{p,\Delta}$ agree on common faces by \Cref{barycentric decomposition}. The graph of $f_p$ is a cone with the union of the simplices as the base and $(p,f(p))$ as the apex. Observe that
\begin{align*}
    \sum_p f_p &= \sum_p \sum_{\Delta} \mathbbm{1}_\Delta f_{p,\Delta} \\
    &= \sum_{\Delta} \mathbbm{1}_\Delta \sum_p f_{p,\Delta} \\
    &= \sum_{\Delta} \mathbbm{1}_\Delta f \\
    &= f.
\end{align*}
It remains to calculate $f_p$ with the help of neurons. Wlog let $p$ be the origin. For any edge $e$ containing $p$ we apply \Cref{lego lemma} with $g = f_p - f_p(0)$ and $C_i$ being extended versions of the simplices in $M'$ adjacent to $p$. Let $f_{p,e}$ be the functions that is constructed in \Cref{lego lemma}. Since
\begin{equation*}
   \max\left\{f_p(0) + \sum_e f_{p,e},0\right\} 
\end{equation*}
coincides with $f_p$ on every edge $e$, has the same linear regions and the same support, we know that it must be equal to $f_p$. Let $E_p$ and $S_p$ be the number of edges and simplices adjacent to $p$, respectively. Since any simplex adjacent to $p$ occurs in the number of layers in \Cref{lego lemma} for each of its $n_0$ edges adjacent to $p$, we need $n_0 S_p$ layers consisting of $2$ neurons to calculate all of the $f_{p,e}$. Furthermore, we need $2$ additional neurons in each layer to save the intermediate results of the sum $\sum_e f_{p,e}$. Finally, $1$ final layer can calculate $f_p$, which means that we can calculate $f_p$ in $1 + n_0 S_p$ layers consisting of $4$ neurons. This implies that we need
\begin{equation*}
    \sum_p 1 + n_0 S_p = V + n_0(n_0+1)S = O(k^{(n_0+1)^2})
\end{equation*}
layers to calculate $f$. Each layer consists of $4$ neurons to calculate $f_p$, $2$ neurons to keep track of the sum $\sum_p f_p$ and $2n_0$ neurons to save the input, so it has $2n_0+6$ neurons in total.
\end{proof}

\newpage

\section{Universal Approximation Theorems}

One straightforward application of expressive powers are so-called universal approximation theorems. The idea is to prove that neural networks can approximate any function of a certain class. For the class of continuous functions this can for example be done by first approximating polynomials and then using the fact that polynomials are dense \cite{universalApproximation}. We will apply this idea with piecewise affine functions instead of polynomials.

\begin{proof}[Proof of \Cref{corollary: lower bound E(n) universal approximation}]
This follows directly from \Cref{theorem: lower bound E(n)} and the fact that piecewise affine functions are dense in those spaces.
\end{proof}

A major drawback of universal approximation theorems like \Cref{corollary: lower bound E(n) universal approximation} is that they do not make any quantitative statement about how fast we can approximate a function. One result in this direction is given for example in \cite{optimalApproximationRate}, where they used piecewise constant functions in the approximation. The advantage of using piecewise affine functions rather than piecewise constant functions is that we can also give an approximation rate in the $W^{1,1}$-norm. Suitable estimates can be found in \cite{approximationSobolevFunctions}.

\begin{lemma}[\cite{approximationSobolevFunctions}, Proof of Theorem 1]
\label{lemma: affine interpolation}
Let $p \in [1,\infty)$. Pick a function $f \in W^{1,p}(\mathbb{R}^{n_0})$. Consider a triangulation $T$ of $\mathbb{R}^{n_0}$ with simplices of diameter $<r$. Define $T_h$ as the translation of $T$ by $h$. Let $g_h$ be a piecewise affine function that is affine on each simplex of $T_h$ and coincides with $f$ on the vertices of $T_h$. Then we have
\begin{align*}
\int_{B_r} \int_{\mathbb{R}^{n_0}} |Df - Dg_h| \, \mathrm{d}x \, \mathrm{d}h \leq C \int_{B_r} \int_{\mathbb{R}^{n_0}} |Df(x) - Df(x+h)| \, \mathrm{d}x \, \mathrm{d}h.
\end{align*}
\end{lemma}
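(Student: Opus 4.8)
The plan is to reduce the doubly-integrated estimate to a purely local interpolation bound on a single simplex and then recombine the pieces by Fubini, letting the averaging over $h \in B_r$ cancel the factor coming from the simplex volume. Throughout I will assume the triangulation $T$ is shape-regular and quasi-uniform, so that every simplex $\Delta$ has diameter comparable to $r$ and volume $|\Delta| \geq c\,r^{n_0}$, and that the barycentric coordinate functions $\lambda_0,\dots,\lambda_{n_0}$ of each simplex satisfy $|\nabla \lambda_i| \leq C/r$. For a fixed shift $h$ the function $g_h$ is affine on each simplex $\Delta$ of $T_h$, so $Dg_h$ is the constant vector $\sum_i f(v_i)\,\nabla\lambda_i$ on $\Delta$, where $v_0,\dots,v_{n_0}$ are the vertices of $\Delta$.

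The heart of the argument is the local estimate that for $x$ in a simplex $\Delta$ of $T_h$ one has
\begin{equation*}
    |Dg_h(x) - Df(x)| \leq C\,\frac{1}{|\Delta|}\int_\Delta |Df(y) - Df(x)|\,\mathrm{d}y,
\end{equation*}
with $C$ depending only on the shape-regularity. To prove it I would first replace $Df(x)$ by the simplex average $P \coloneqq \frac{1}{|\Delta|}\int_\Delta Df$, since $|P - Df(x)| \leq \frac{1}{|\Delta|}\int_\Delta |Df(y)-Df(x)|\,\mathrm{d}y$ directly. It then remains to bound $|Dg_h(x) - P|$. The key observation is that affine functions are reproduced exactly by the interpolation, so for any affine $\ell$ with gradient $P$ we have $\sum_i \ell(v_i)\nabla\lambda_i = P$; subtracting, $Dg_h(x) - P = \sum_i (f-\ell)(v_i)\,\nabla\lambda_i$, which depends only on $Df - P$ and annihilates constant gradients. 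A Bramble--Hilbert/Poincar\'e inequality on the reference simplex, transported by the affine change of variables (using $|\nabla\lambda_i|\leq C/r$), yields $|Dg_h(x)-P| \leq C\,\frac{1}{|\Delta|}\int_\Delta |Df(y)-P|\,\mathrm{d}y$, and one more triangle inequality bounds $|Df(y)-P|$ by $|Df(y)-Df(x)| + |Df(x)-P|$, giving the claim. This local estimate is the main obstacle: it is where shape-regularity enters, and it is also where a genuine regularity caveat appears, since the vertex values $f(v_i)$ are only well defined when $f$ is continuous. For the intended application $Df$ is Lipschitz, so I would carry out the argument assuming $Df$ continuous and recover the general $W^{1,p}$ statement by a density/mollification argument.

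With the local bound in hand, the assembly is straightforward. Writing $\Delta_h(x)$ for the simplex of $T_h$ containing $x$, and substituting $y = x+h'$ in the inner integral (note $\Delta_h(x) - x \subseteq B_r$ because the diameter is $<r$), I obtain
\begin{align*}
    \int_{B_r}\int_{\mathbb{R}^{n_0}} |Df - Dg_h|\,\mathrm{d}x\,\mathrm{d}h
    &\leq C \int_{B_r}\int_{\mathbb{R}^{n_0}} \frac{1}{|\Delta_h(x)|}\int_{\Delta_h(x)} |Df(y)-Df(x)|\,\mathrm{d}y\,\mathrm{d}x\,\mathrm{d}h \\
    &\leq \frac{C}{r^{n_0}} \int_{B_r}\int_{\mathbb{R}^{n_0}}\int_{B_r} |Df(x+h')-Df(x)|\,\mathrm{d}h'\,\mathrm{d}x\,\mathrm{d}h.
\end{align*}
The integrand no longer depends on $h$, so the integral over $h\in B_r$ contributes the factor $|B_r| \leq C r^{n_0}$, which exactly cancels the $r^{-n_0}$ produced by the simplex volume. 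What remains is precisely $C \int_{B_r}\int_{\mathbb{R}^{n_0}} |Df(x+h')-Df(x)|\,\mathrm{d}x\,\mathrm{d}h'$, the claimed right-hand side. Thus the only nontrivial ingredient is the local interpolation estimate of the second paragraph; the averaging over $h$ plays the purely bookkeeping role of converting the local simplex average into a free translation average over $B_r$.
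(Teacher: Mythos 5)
Your argument funnels everything through the local pointwise estimate
\begin{align*}
|Dg_h(x) - Df(x)| \leq \frac{C}{|\Delta|}\int_\Delta |Df(y)-Df(x)|\,\mathrm{d}y,
\end{align*}
with $C$ depending only on shape regularity, and this estimate is false --- not merely for rough $f$, but already for smooth $f$, so your continuity caveat and a mollification argument cannot repair it. Take $\Delta$ the unit simplex in $\mathbb{R}^2$ with vertices $v_0=(0,0)$, $v_1=(1,0)$, $v_2=(0,1)$, and let $f$ be a smooth bump of height $1$ supported in the $\delta$-ball around $v_1$, so that $\int_\Delta |Df| \leq C\delta$. The nodal interpolant is $g = \lambda_1$, whose gradient has norm $1$ everywhere on $\Delta$, while at any fixed $x$ away from $v_1$ we have $Df(x)=0$, so the left-hand side equals $1$ but the right-hand side is $O(\delta)$. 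The underlying reason is that your Bramble--Hilbert step is not available: the functional $u \mapsto \sum_i u(v_i)\nabla\lambda_i$ is not bounded by $\|Du\|_{L^1(\Delta)}$ (even after quotienting by affine functions), because point evaluation is not controlled by the $W^{1,1}$-norm in dimension $n_0 \geq 2$; nodal interpolation is simply not bounded from $W^{1,1}$ to $W^{1,\infty}$.

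This also shows that the averaging over $h$ is not ``purely bookkeeping'': it is exactly what makes the lemma true, and your proposal inverts its role. In the spike example, only those translations $h$ for which a vertex of $T_h$ lands within $\delta$ of the spike produce a bad interpolant, and this exceptional set of $h$ has measure $O(\delta^{n_0})$; it is this smallness, visible only after integrating in $h$, that rescues the doubly integrated inequality. The proof that the paper is quoting from the reference proceeds accordingly: on a simplex of $T_h$ the gradient of the interpolant is a combination of edge divided differences, i.e.\ one-dimensional averages $\fint_{[v_i+h,\,v_j+h]} \partial_e f$ of directional derivatives along edges, so $Dg_h(x)-Df(x)$ is controlled by differences of $Df$ between $x$ and points on the edges of its simplex; integrating in $h$ and applying Fubini converts these line averages into solid averages over translates of size $\lesssim r$, which is precisely how the right-hand side $\int_{B_r}\int_{\mathbb{R}^{n_0}} |Df(x)-Df(x+h)|\,\mathrm{d}x\,\mathrm{d}h$ arises. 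Any correct proof must therefore keep the $h$-integration inside the estimate from the start, rather than trying to establish a pointwise bound for each fixed $h$ as you do.
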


It remains to construct a triangulation. Then we just need to calculate the function corresponding to that triangulation with a neural network in order to prove \Cref{theorem: approximation rate in sobolev norm}.

\begin{proof}[Proof of \Cref{theorem: approximation rate in sobolev norm}]
Define $g_h$ as in \Cref{lemma: affine interpolation}. Using Poincar\'e's inequality, \Cref{lemma: affine interpolation} and the Lipschitz continuity we get
\begin{align*}
\fint_{B_r} \|f-g_h\|_{W^{1,1}} \, \mathrm{d}h &\leq C_0 \fint_{B_r} \int_{\mathbb{R}^{n_0}} |Df - Dg_h| \, \mathrm{d}x \, \mathrm{d}h \\
&\leq C_1 \fint_{B_r} \int_{[-r,1+r]^{n_0}} |Df(x) - Df(x+h)| \, \mathrm{d}x \, \mathrm{d}h \\
&\leq C_1 (1+2r)^{n_0} \Lip(Df) r \\
&\leq C_2 r.
\end{align*}
In particular, there is an $h \in B_r$ such that
\begin{align*}
\|f-g_h\|_{W^{1,1}} \leq C_2 r.
\end{align*}
Let $g \coloneqq g_h$. We need a triangulation with diameter $r = \frac{\varepsilon}{C_2}$ to get an approximation up to an error of $\varepsilon > 0$. To construct this triangulation we subdivide $[0,1]^{n_0}$ into $\lceil\frac{\sqrt{n_0}}{r}\rceil^{n_0}$ cubes of side length $\leq \frac{r}{\sqrt{n_0}}$. Each cube can be subdivided into $n_0!$ simplices of the desired diameter in the following way: For any permutation $\pi$ of the numbers $1,\dots,d$ we define
\begin{align*}
(v^\pi_i)_j \coloneqq \mathbbm{1}_{\pi^{-1}(j) \leq i}
\end{align*}
We decompose the unit cube into simplices defined by
\begin{align*}
    \Delta^\pi \coloneqq \conv\{v^\pi_i: i=0,\dots,d\}.
\end{align*}
Scaling and translation gives the desired triangulation. Altogether, this triangulation needs
\begin{align*}
    \lceil\frac{\sqrt{n_0}}{r}\rceil^{n_0} n_0!
\end{align*}
simplices. As shown in the proof of \Cref{theorem: lower bound E(n)} a neural network needs $O(n_0(n_0+1))$ layers per simplex to calculate $g$. All in all, we need
\begin{align*}
    O(\varepsilon^{-n_0} C_2^{n_0} n_0^{\frac{n_0}{2}+1} (n_0+1)!)
\end{align*}
layers to approximate $f$ up to an error of $\varepsilon$ in the $W^{1,1}$-norm. This gives an approximation rate of
\begin{align*}
    &C \Lip(Df) L^{-\frac{1}{n_0}}. \qedhere
\end{align*}
\end{proof}

\newpage

\section{Width vs Depth}

\subsection{Width Inefficiency}

Another application of expressive powers are results that compare width and depth efficiency of neural networks. It turns out that wide and shallow networks sometimes need exponentially many neurons to calculate the same function as a deep network, which is the content of \Cref{theorem: width inefficiency in 1 dimension}.

\begin{proof}[Proof of \Cref{theorem: width inefficiency in 1 dimension}]
The function $h: \mathbb{R} \to \mathbb{R}$ defined by
\begin{equation*}
    h(x) \coloneqq 1 - \max\{0,1-3x\} - \max\{0,3x-1\} + \max\{0,6x-4\}
\end{equation*}
can be calculated by $1$ layer of $3$ neurons, so $L^2$ hidden layers can calculate the composition of $L^2$ copies of $h$. We will call this composition $g$. \\

\begin{figure}[h]
    \centering
    \begin{tikzpicture}
    \begin{axis}[no markers, ymin=-1, ymax=2]
    \addplot+[domain=-0.3:1.3,samples=100,color=blue]{1-max(1-3*x,0)-max(3*x-1,0)+max(6*x-4,0)};
    \addplot+[dashed,color=black] coordinates {(0,0)(0,1)};
    \addplot+[dashed,color=black] coordinates {(0,1)(1,1)};
    \addplot+[dashed,color=black] coordinates {(1,1)(1,0)};
    \addplot+[dashed,color=black] coordinates {(1,0)(0,0)};
    \end{axis}
    \end{tikzpicture}
    \begin{tikzpicture}[
        evaluate={
            function f(\x) {
                return 1-max(1-3*\x,0)-max(3*\x-1,0)+max(6*\x-4,0);
            };
        },
    ]
    \begin{axis}[no markers,ymin=-1,ymax=2]
    \addplot+[domain=-0.3:1.3,samples=200,color=blue]{f(f(x))};
    \addplot+[dashed,color=black] coordinates {(0,0)(0,1)};
    \addplot+[dashed,color=black] coordinates {(0,1)(1,1)};
    \addplot+[dashed,color=black] coordinates {(1,1)(1,0)};
    \addplot+[dashed,color=black] coordinates {(1,0)(0,0)};
    \end{axis}
    \end{tikzpicture}
    \caption{Graph of $h$ and $h \circ h$}
\end{figure}
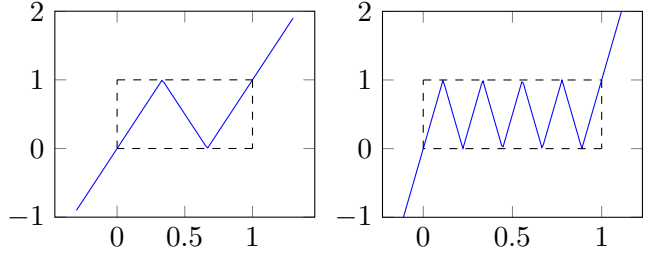

Similar to the construction in \Cref{theorem: R(n) in 1 dimension}, $g$ has $3^{L^2}$ linear regions of length $3^{-L^2}$. Consider the intervals
\begin{equation*}
    I_i \coloneqq \left(\frac{i-\frac{1}{2}}{3^{L^2}},\frac{i+\frac{1}{2}}{3^{L^2}}\right)
\end{equation*}
for $i = 1,\dots,3^{L^2}-1$. $g-\frac{1}{2}$ has positive sign in $I_1$, negative sign in $I_2$ and so on. \\

Now, let $f$ be any function that can be calculated by a neural network of depth $L$ and width $\leq N$. We say that $f$ is a good approximation for $g$ in one of those intervals if $f-\frac{1}{2}$ has the same sign as $g-\frac{1}{2}$ at least once (and is non-zero there). If $f$ is a good approximation in two consecutive intervals, it has to cross the line $y = \frac{1}{2}$. By \Cref{theorem: R(n) in 1 dimension} $f$ has at most $(N+1)^L$ linear regions, so it can cross this line at most $(N+1)^L$ times. Thus, there are at most $(N+1)^L$ pairs of consecutive intervals where $f$ is a good approximation. \\

Altogether, there are $3^{L^2}-2$ pairs of consecutive intervals, so there are at least $3^{L^2}-(N+1)^L-2$ pairs of intervals such that $f$ is not a good approximation in at least one of them. Since one interval is part of at most two pairs, there are at least
\begin{equation*}
    \frac{1}{2} (3^{L^2}-(N+1)^L-2)
\end{equation*}
intervals where $f$ is not a good approximation. For any $j$ such that $f$ is not a good approximation in $I_j$ we have
\begin{equation*}
    \int_{I_j} |f-g| \, \mathrm{d}x \geq \int_{I_j} \left|g-\frac{1}{2}\right| \, \mathrm{d}x = \frac{1}{4} 3^{-L^2}.
\end{equation*}
All in all, this implies
\begin{equation*}
    \int_0^1 |f-g| \, \mathrm{d}x \geq \frac{1}{2} (3^{L^2}-(N+1)^L-2) \cdot \frac{1}{4} 3^{-L^2}.
\end{equation*}
In case of $N < 3^{L-1}-1$ and $L \geq 2$ this can be bounded by
\begin{align*}
    \int_0^1 |f-g| \, \mathrm{d}x &\geq \frac{1}{8} \frac{3^{L^2}-(3^{L-1}-1)^L-2}{3^{L^2}} \\
    &\geq \frac{1}{8} \frac{3^{L^2}-3^{(L-1)L}}{3^{L^2}} \\
    &= \frac{1}{8} (1-3^{-L}) \\
    &\geq \frac{1}{9}.
\end{align*}
Hence, $g$ cannot be approximated by any neural network of depth $L$ unless it has exponential width.
\end{proof}

\newpage

\subsection{Depth Efficiency}

After we have seen the inefficiencies of wide networks, it is natural to ask the opposite question: Can deep networks also struggle to approximate functions calculated by wide networks? This is discussed in \Cref{theorem: depth efficiency}.

\begin{proof}[Proof of \Cref{theorem: depth efficiency}]
First of all, we will use \Cref{theorem: upper bound R(n)} to bound the number of linear regions of the function calculated by a network of width $N$ and depth $L$. Each $d_l$ is obviously bounded by $n_0$ and $f_{j,d}(n)$ is bounded by $n \choose j$. This gives an upper bound of
\begin{align*}
    \left(\sum_{j=0}^{n_0} {N \choose j}\right)^L.
\end{align*}
A straightforward induction shows that
\begin{align*}
    \sum_{j=0}^{n_0} {N \choose j} \leq 1 + N^{n_0}
\end{align*}
due to
\begin{align*}
    \sum_{j=0}^{n_0} {N \choose j} &\leq 1 + N^{n_0-1} + {N \choose n_0} \\
    &\leq 1 + N^{n_0-1} + N^{n_0-1}(N-1) \\
    &= 1 + N^{n_0}.
\end{align*}
Hence, the number of linear regions is bounded by $(1+N^{n_0})^L$. By \Cref{theorem: lower bound E(n)} there is a neural network with $O(N^{Ln_0(n_0+1)^2})$ hidden layers of size $2n_0+6$ that can calculate this function.
\end{proof}

\newpage

\section{Approximating Activation Functions}
\label{Section 6}

\subsection{Approximating ReLU with ReLU-computing Functions}

While our ultimate goal is to approximate ReLU networks, we will start with some easier tasks. Approximating the identity allows us to store intermediate results.

\begin{lemma}\label{lemma: identity}[\cite{universalApproximation}, Lemma 4.1]
Assume that $\rho \in C(\mathbb{R})$ is differentiable at at least one point with nonzero derivative. Then a single enhanced neuron with activation function $\rho$ can uniformly approximate the identity on a compact subset with arbitrarily small error.
\end{lemma}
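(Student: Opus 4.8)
The plan is to exploit the single point $x_0$ where $\rho'(x_0)$ exists and is nonzero. Near $x_0$, the function $\rho$ is differentiable, so by definition of the derivative we have
\begin{align*}
    \rho(x_0 + t) = \rho(x_0) + \rho'(x_0) t + o(t) \quad \text{as } t \to 0.
\end{align*}
The idea is to form a difference quotient using one enhanced neuron. Recall that an enhanced neuron computes $g \circ \rho \circ f$ with $f, g$ affine. I would choose $f(x) = x_0 + hx$ for a small scaling parameter $h > 0$, so the neuron evaluates $\rho$ at the point $x_0 + hx$. Then I would pick the outer affine map $g$ to subtract off the constant $\rho(x_0)$ and rescale by $1/(h\rho'(x_0))$, giving the candidate approximation
\begin{align*}
    \Phi_h(x) \coloneqq \frac{\rho(x_0 + hx) - \rho(x_0)}{h \, \rho'(x_0)}.
\end{align*}

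The key step is then to show that $\Phi_h \to \Id$ uniformly on a compact set $K = [-A, A]$ as $h \to 0$. Writing $t = hx$, we have
\begin{align*}
    \Phi_h(x) - x = \frac{\rho(x_0 + hx) - \rho(x_0) - \rho'(x_0)(hx)}{h \, \rho'(x_0)}.
\end{align*}
Differentiability at $x_0$ gives that the numerator is $o(hx) = o(h)$ uniformly as $h \to 0$ for $x$ ranging over the bounded set $K$; dividing by $h$ and the fixed nonzero constant $\rho'(x_0)$ keeps this uniformly small. More carefully, for any $\varepsilon > 0$ there is $\delta > 0$ such that $|\rho(x_0 + t) - \rho(x_0) - \rho'(x_0)t| \leq \varepsilon|t|$ whenever $|t| \leq \delta$; taking $h$ small enough that $hA \leq \delta$ forces $|t| = |hx| \leq \delta$ for all $x \in K$, and then $|\Phi_h(x) - x| \leq \varepsilon A / |\rho'(x_0)|$ uniformly. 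Since $\varepsilon$ is arbitrary, this yields uniform convergence.

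The main obstacle, though a mild one, is to make the single-point differentiability assumption do all the work: we have no control over $\rho$ away from $x_0$, so every estimate must stay localized to the shrinking window $x_0 + hK$, and the argument must avoid invoking continuity of the derivative or any behavior of $\rho$ elsewhere. The linearization used here is precisely the first-order Taylor expansion at the one good point, and confining the input to $x_0 + h[-A, A]$ guarantees we only ever probe $\rho$ inside the region where the difference quotient is controlled. Once uniform convergence is established, choosing $h$ small enough that the error drops below the prescribed tolerance completes the proof, and the resulting map is manifestly of the form $g \circ \rho \circ f$, hence a single enhanced neuron.
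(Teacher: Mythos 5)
Your proposal is correct, and it is essentially the standard argument that the paper defers to by citation (Kidger--Lyons, Lemma 4.1): rescale into a shrinking window around the good point via the inner affine map, normalize the difference quotient via the outer affine map, and convert pointwise differentiability into uniform convergence on a bounded set through the $\varepsilon$--$\delta$ form of the derivative. It also mirrors the finite-difference scaling technique the paper itself uses in its proof of the square-approximation lemma, so nothing further is needed.
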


In addition to storing results, we also need a way to calculate new results. This will be done with an approximation of $x \mapsto x^2$.

\begin{lemma}\label{lemma: square}[\cite{universalApproximation}, Proof of Proposition 4.11]
Let $\rho \in C(\mathbb{R})$ be $C^3$ in a neighborhood of a point $\alpha \in \mathbb{R}$ and $\rho''(\alpha) \neq 0$. Then one layer of two enhanced neurons with activation function $\rho$ can approximate $x \mapsto x^2$ with arbitrary precision. 
\end{lemma}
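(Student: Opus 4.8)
The plan is to exploit the fact that $\rho$ is $C^3$ near $\alpha$ together with a finite-difference scheme that isolates the second derivative. Since $\rho''(\alpha) \neq 0$, a scaled second difference of $\rho$ evaluated near $\alpha$ will behave quadratically. Concretely, I would consider the enhanced neuron outputs $\rho(\alpha + h x)$, $\rho(\alpha)$, and $\rho(\alpha - h x)$ for a small parameter $h > 0$, where the affine maps $x \mapsto \alpha \pm h x$ are exactly the affine pre-compositions allowed inside a neuron, and the post-composition by an affine function (taking the linear combination and scaling) is absorbed into the enhanced-neuron formalism. Two neurons suffice because $\rho(\alpha)$ is a constant that can be folded into the output affine map.

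The key computation is the symmetric second-order Taylor expansion. Writing the second difference
\begin{align*}
\rho(\alpha + h x) - 2\rho(\alpha) + \rho(\alpha - h x) = \rho''(\alpha) h^2 x^2 + O(h^3 x^3),
\end{align*}
which holds uniformly for $x$ in a fixed compact set once $h$ is small enough that $\alpha \pm h x$ stays inside the $C^3$-neighborhood of $\alpha$. The cross terms linear in $x$ cancel by the symmetry of the difference, and the cubic remainder is controlled by the uniform bound on $\rho'''$ on that neighborhood. Dividing by $\rho''(\alpha) h^2$ then yields
\begin{align*}
\frac{\rho(\alpha + h x) - 2\rho(\alpha) + \rho(\alpha - h x)}{\rho''(\alpha) h^2} = x^2 + O(h),
\end{align*}
so the left-hand side — a fixed affine post-composition of two enhanced neurons — approximates $x \mapsto x^2$ uniformly on the compact set, with error $O(h)$ that tends to $0$ as $h \to 0$.

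The main obstacle is really just bookkeeping rather than a conceptual hurdle: I must verify that the whole expression genuinely fits the \emph{enhanced neuron} template, namely an affine map $f: \mathbb{R} \to \mathbb{R}^2$ into the two neurons, component-wise $\rho$, and an affine map $g: \mathbb{R}^2 \to \mathbb{R}$ out. The inner affine map supplies the two arguments $\alpha + hx$ and $\alpha - hx$, and the outer affine map performs the weighted combination, the subtraction of $2\rho(\alpha)$ (a constant, hence an allowed affine shift), and the division by the nonzero constant $\rho''(\alpha) h^2$. One must also fix the compact set first and then choose $h$ small depending on it so that the Taylor remainder estimate is valid uniformly; since $\rho'''$ is bounded on a closed neighborhood of $\alpha$, this choice is unproblematic. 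The nondegeneracy $\rho''(\alpha) \neq 0$ is exactly what prevents the leading term from vanishing and lets us normalize.
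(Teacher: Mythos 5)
Your proposal is correct and matches the paper's proof essentially verbatim: both use the symmetric second difference $\bigl(\rho(\alpha+hx)-2\rho(\alpha)+\rho(\alpha-hx)\bigr)/\bigl(h^2\rho''(\alpha)\bigr)$, realized by two enhanced neurons with the constant $\rho(\alpha)$ and the normalization absorbed into the output affine map, followed by a Taylor expansion giving error $O(h)$ on a compact set. No meaningful difference in approach.
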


\begin{proof}
For $h > 0$ define $\sigma_h: \mathbb{R} \to \mathbb{R}$ by
\begin{align*}
    \sigma_h(x) = \frac{\rho(\alpha + hx) - 2\rho(\alpha) + \rho(\alpha - hx)}{h^2 \rho''(\alpha)}.
\end{align*}
Obviously, $\sigma_h$ can be calculated by one layer of two enhanced neurons. By Taylor we have
\begin{align*}
    \sigma_h(x) &= \frac{\rho(\alpha) + hx\rho'(\alpha) + \frac{1}{2}h^2x^2\rho''(\alpha) + O(h^3x^3)}{h^2 \rho''(\alpha)} - \frac{2\rho(\alpha)}{h^2 \rho''(\alpha)} \\
    &+ \frac{\rho(\alpha) - hx\rho'(\alpha) + \frac{1}{2}h^2x^2\rho''(\alpha) + O(h^3x^3)}{h^2 \rho''(\alpha)} \\
    &= x^2 + O(hx^3). \qedhere
\end{align*}
\end{proof}

Observe that ReLU-computing functions satisfy the conditions of those two lemmas. Next up, we will explore how the map $x \mapsto x^2$ can be used for calculations. One application is the product of two numbers.

\begin{lemma}\label{lemma: product}[\cite{universalApproximation}, Lemma 4.2]
One layer consisting of two enhanced neurons with activation function $x \mapsto x^2$ can calculate the function $(x,y) \mapsto xy$.     
\end{lemma}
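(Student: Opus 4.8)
The plan is to reduce the product to two squarings by means of the polarization identity. By definition, a layer of two enhanced neurons with activation function $\rho$ computes $g \circ \rho \circ f$, where $f: \mathbb{R}^2 \to \mathbb{R}^2$ and $g: \mathbb{R}^2 \to \mathbb{R}$ are affine and $\rho$ acts component-wise. With $\rho(z) = z^2$ the task is therefore to realize $(x,y) \mapsto xy$ in the form $g(\rho(f(x,y)))$ for a suitable choice of the affine maps $f$ and $g$.

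First I would set $f(x,y) = (x+y, \, x-y)$, so that the two neurons output $(x+y)^2$ and $(x-y)^2$. Then I would take the affine recombination $g(a,b) = \frac{1}{4}(a-b)$. A direct computation gives
\begin{align*}
    g\big((x+y)^2, (x-y)^2\big) &= \frac{1}{4}\big((x+y)^2 - (x-y)^2\big) \\
    &= \frac{1}{4} \cdot 4xy \\
    &= xy,
\end{align*}
which is exactly the desired function.

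There is no substantial obstacle here: both $f$ and $g$ are affine and the activation is precisely $\rho(z) = z^2$, so the construction fits the definition of a layer of two enhanced neurons verbatim. The only point worth noting is that the outer affine map $g$ is exactly the post-composition permitted by an enhanced neuron (and could in any case be folded into the affine map of the following layer), so that no extra layer is required. Since \Cref{lemma: square} shows that $x \mapsto x^2$ is itself approximable by two enhanced neurons of a ReLU-computing activation, this lemma is the step that lets us bootstrap from squaring to genuine multiplication.
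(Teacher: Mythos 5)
Your proof is correct and uses exactly the same polarization identity $xy = \frac{1}{4}\left((x+y)^2-(x-y)^2\right)$ as the paper, merely spelling out the affine maps $f$ and $g$ explicitly. Nothing further is needed.
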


\begin{proof}
This follows directly from
\begin{align*}
    xy &= \frac{1}{4}((x+y)^2-(x-y)^2). \qedhere
\end{align*}
\end{proof}

On a compact interval we can also use the approximation of $x \mapsto x^2$ to approximate inverses quite well.

\begin{lemma}\label{lemma: inverse}[adaptation of \cite{universalApproximation}, Lemma 4.5]
Let $\varepsilon > 0$. Then $m$ layers of three enhanced neurons with square activation function can approximate the function $g: [\varepsilon,2-\varepsilon] \to \mathbb{R}, x \mapsto \frac{1}{x}$ with an error of at most $\frac{(1-\varepsilon)^{2^{m+1}}}{\varepsilon}$.
\end{lemma}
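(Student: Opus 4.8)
The plan is to realize the classical division-free Newton iteration for the reciprocal, $y_{k+1} = y_k(2 - x y_k)$, which converges quadratically to $1/x$, using one network layer per iteration. Writing $e_k \coloneqq 1 - x y_k$ for the residual, a direct computation gives $e_{k+1} = 1 - x y_{k+1} = 1 - x y_k(2 - x y_k) = (1 - x y_k)^2 = e_k^2$, so each layer squares the residual. Since $y_k - \frac{1}{x} = -\frac{e_k}{x}$, controlling $e_k$ controls the output error.

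The main obstacle is that the Newton update $y_{k+1} = 2 y_k - x y_k^2$ is cubic in the pair $(x, y_k)$, whereas one layer of square-activation enhanced neurons can only produce affine combinations of squares of affine functions, i.e. degree-two expressions. I would resolve this by carrying the auxiliary quantity $p_k \coloneqq x y_k$ alongside $y_k$ as the state passed between layers. Then the update decouples into $y_{k+1} = y_k(2 - p_k)$ and $p_{k+1} = p_k(2 - p_k) = 1 - (1-p_k)^2$, both of which are degree two in the state $(y_k, p_k)$ and in particular $x$-free, so the input need not be reused during the iteration. The product $y_k(2 - p_k)$ is computed by two neurons via the difference-of-squares identity of \Cref{lemma: product}, namely $y_k(2-p_k) = \frac14\big((y_k + 2 - p_k)^2 - (y_k - 2 + p_k)^2\big)$, and $p_{k+1} = 1 - (1-p_k)^2$ by a single neuron $(1-p_k)^2$; hence three enhanced neurons per layer suffice, matching the statement.

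For the error, I would initialise with $y_0 = 2 - x$ and $p_0 = x(2-x) = 1 - (1-x)^2$, so that $e_0 = (1-x)^2$, and then run $m$ Newton layers to obtain $e_m = e_0^{2^m} = (1-x)^{2^{m+1}}$. On $[\varepsilon, 2-\varepsilon]$ we have $|1-x| \le 1 - \varepsilon$ and $x \ge \varepsilon$, whence
\begin{align*}
\left| y_m - \frac{1}{x} \right| = \frac{|e_m|}{x} = \frac{(1-x)^{2^{m+1}}}{x} \le \frac{(1-\varepsilon)^{2^{m+1}}}{\varepsilon},
\end{align*}
which is the claimed bound.

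The one point requiring care is the bookkeeping of the initial step: forming $p_0$ costs a single square, which must be absorbed into the input encoding (the input is stored separately, so one preprocessing square is available) in order to charge exactly $m$ layers rather than $m+1$; alternatively one starts from $y_0 = 1$, $p_0 = x$ and accepts the slightly weaker exponent $2^m$. Since here the activation is the exact square $x \mapsto x^2$, there is no approximation error from the neurons themselves, so the displayed estimate is the only source of error.
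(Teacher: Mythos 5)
Your proposal is correct and, despite the different derivation, it is the paper's construction in disguise. The paper sets $z \coloneqq 1-x$ and computes the partial products $\prod_{i=0}^{k}(1+z^{2^i})$ of the geometric series for $\frac{1}{1-z}$, using one neuron per layer for the squaring $z^{2^{k}} \mapsto z^{2^{k+1}}$ and two neurons for multiplying the running product by the new factor via \Cref{lemma: product}; the error is read off from the telescoping identity $\frac{1}{1-z} - \prod_{i=0}^{n}(1+z^{2^i}) = \frac{z^{2^{n+1}}}{1-z}$. With your initialisation $y_0 = 2-x = 1+z$ one has $1-p_k = e_k = z^{2^{k+1}}$ and $y_m = \prod_{i=0}^{m}(1+z^{2^i})$, so your residual recursion $e_{k+1} = e_k^2$ is the paper's repeated squaring, your update $y_{k+1} = y_k(2-p_k) = y_k(1+e_k)$ is the paper's multiplication by the next factor, and your error $e_m/x$ is exactly the right-hand side of the identity above. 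The layer budget (one squaring neuron plus one product pair per layer) is also identical.

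The caveat you flag at the end is real, and it is worth noting that it applies to the paper's own write-up even more than to yours. A layer of parallel square-activation enhanced neurons outputs only affine combinations of squares of affine functions of its input, so the paper's prescription to compute $z^{2^k}$ and, within the same layer, multiply the running product by $(1+z^{2^k})$ cannot be executed literally: that factor is quadratic, not affine, in the layer's input — the same degree obstruction you identify for forming $p_0$. Carried out strictly, both constructions deliver the exponent $2^m$ (or $2^m+1$, if the freshly computed square is added into the final affine output) with $m$ layers, and need an $(m+1)$st layer for the stated $2^{m+1}$; indeed, since each layer at most doubles polynomial degree and $\prod_{i=0}^{m}(1+z^{2^i})$ has degree $2^{m+1}-1$, that polynomial genuinely cannot be produced in $m$ layers. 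The discrepancy is harmless where the lemma is applied (\Cref{lemma: approximate ReLU} takes $m = O(n^{3/2})$ with ample slack), so your proposal matches the paper's proof in both substance and strength, and your explicit state-passing formulation is the cleaner of the two.
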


\begin{proof}
Let $z = 1-x$. Observe that $z \in [-1+\varepsilon, 1-\varepsilon]$ and
\begin{align*}
    \frac{1}{1-z} - \prod_{i=0}^n (1+z^{2^i}) &= \frac{1}{1-z} - \sum_{i=0}^{2^{n+1}-1} z^i \\
    &= \sum_{i=2^{n+1}}^\infty z^i \\
    &= \frac{z^{2^{n+1}}}{1-z}
\end{align*}
so it suffices to calculate $\prod_{i=0}^n (1+z^{2^i})$ with the neural network. In order to do that, we use one neuron in the $k$th layer to calculate $z^{2^k} = (z^{2^{k-1}})^2$. With the other two neurons we just multiply the intermediate result $\prod_{i=0}^{k-1} (1+z^{2^i})$ by $(1+z^{2^k})$.
\end{proof}

The next objective is the key step in the proof of \Cref{theorem: ReLU-computing calculates relu}. Namely, we will approximate the ReLU function using a neural network with a ReLU-computing activation function.

\begin{lemma}\label{lemma: approximate ReLU}
Let $\rho: \mathbb{R} \to \mathbb{R}$ be ReLU-computing. A neural network of depth $O(n^\frac{3}{2})$ and width $8$ with activation function $\rho$ can approximate $\max\{0,x\}$ for $x \in [-1,1]$ with an error of $O(e^{-\sqrt{n}})$.
\end{lemma}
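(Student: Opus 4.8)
The plan is to build $\max\{0,x\}$ out of the primitives already constructed from a ReLU-computing activation function, controlling the two sources of error—the Taylor approximation of $x \mapsto x^2$ and the truncation of a power series—so that the total error decays like $e^{-\sqrt n}$ while the depth stays $O(n^{3/2})$.

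First I would reduce ReLU to the absolute value via the identity
\begin{align*}
    \max\{0,x\} = \tfrac{1}{2}(x + |x|),
\end{align*}
so it suffices to approximate $|x|$ on $[-1,1]$. The natural route is $|x| = \sqrt{x^2}$: using \Cref{lemma: square} I can produce an approximation of $x^2$ from two enhanced neurons, and the remaining task is to approximate the square root of a number lying in $[0,1]$. For the square root I would mimic the series-truncation idea used for the inverse in \Cref{lemma: inverse}: writing $y = 1 - t$ with $t = 1 - x^2 \in [0,1]$, one can expand $\sqrt{1-t}$ (or an equivalent Newton/Babylonian iteration $s_{k+1} = \tfrac{1}{2}(s_k + y/s_k)$) and truncate after enough terms. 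Each step of such an iteration needs only the product $(x,y)\mapsto xy$ from \Cref{lemma: product} and an inverse from \Cref{lemma: inverse}, both of which cost $O(1)$ layers of width $O(1)$, while the identity-storage of \Cref{lemma: identity} keeps the input $x$ available in parallel. The width $8$ is then accounted for by a constant number of working neurons plus storage neurons.

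The key quantitative balancing act is to choose the number of iterations (or truncation order) $m$ and the Taylor step-size $h$ as functions of $n$ so that both error contributions are $O(e^{-\sqrt n})$ simultaneously. A quadratically convergent scheme like Newton's method doubles the number of correct digits per step, giving an error of order $\gamma^{2^m}$ for some $\gamma < 1$ after $m$ layers, exactly as in \Cref{lemma: inverse}; to reach $e^{-\sqrt n}$ I would take $m = O(\log \sqrt n) = O(\log n)$ such steps. The dominant depth cost comes not from the iteration count but from driving the Taylor error in the square approximation down to $e^{-\sqrt n}$: since \Cref{lemma: square} gives error $O(h\,x^3)$ and each application introduces such an error, one must take $h$ exponentially small, and accumulating the compositions to suppress this propagated error through the iteration is what forces the depth up to $O(n^{3/2})$.

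The main obstacle I anticipate is twofold. First, the square-root (or $|x|$) map is not Lipschitz near $0$, so a naive iteration will amplify the small Taylor errors from \Cref{lemma: square} precisely where $x$ is near zero, which is exactly where ReLU has its kink; I would need either to work on $[0,1]$ via $x^2$ (which pushes the difficulty to $t$ near $1$) or to handle a neighborhood of the origin separately, tracking how the error in $x^2$ propagates through $\sqrt{\cdot}$. Second, and more seriously, is the bookkeeping of compounded errors: each of the $O(\log n)$ iteration layers composes an inexact product and inexact inverse on top of an inexact square, and I must verify that the per-layer errors, after being magnified by the iteration's sensitivity, still sum to $O(e^{-\sqrt n})$. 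Getting the step-size $h$ small enough to beat this accumulation—while ensuring the arguments stay inside the compact domain where \Cref{lemma: inverse} and \Cref{lemma: square} are valid—is what ultimately dictates the $n^{3/2}$ depth, and making that trade-off precise is the crux of the proof.
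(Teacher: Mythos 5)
There is a genuine gap: the missing ingredient is the explicit \emph{rational} approximant of Newman type, and without it neither of your two routes delivers the stated error within the stated depth. The paper sets $\xi = e^{-1/\sqrt{n}}$, $P(x)=\prod_{k=0}^{n-1}(x+\xi^k)$, and evaluates $R(x)=x\,P(x)/(P(x)+P(-x))$, which by a known theorem on rational approximation of $|x|$ is within $\tfrac32 e^{-\sqrt n}$ of $\max\{0,x\}$ on $[-1,1]$; the network then only has to evaluate this one explicit rational function using \Cref{lemma: identity}, \Cref{lemma: square}, \Cref{lemma: product} and \Cref{lemma: inverse}. Your first route (truncating the binomial series for $\sqrt{1-t}$ with $t=1-x^2$) cannot work: the series coefficients decay only like $k^{-3/2}$, so the truncation error at $t=1$ (i.e.\ at $x=0$, exactly where the kink sits) decays only polynomially in the number of terms, and reaching $e^{-\sqrt n}$ would require exponentially many layers. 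This is not a technicality: by Bernstein's theorem, \emph{any} polynomial scheme of polynomially bounded degree is stuck at error $\sim 1/\mathrm{degree}$, so a division is essential, and the whole difficulty is where and how it enters. Your second route (Newton iteration $s_{k+1}=\tfrac12(s_k+y/s_k)$) is not doomed in principle, but your quantitative claims are wrong: quadratic convergence fails at $y=0$, where the iteration contracts only linearly ($s_{k+1}=s_k/2$), so you need $\Theta(\sqrt n)$ iterations rather than $O(\log n)$; moreover each iteration divides by an $s_k$ that can be as small as $e^{-\sqrt n}$, so each application of \Cref{lemma: inverse} itself costs $\Theta(\sqrt n)$ layers, and the propagation of the resulting inexact inverses through the iteration is precisely the analysis you defer and never supply.

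Your diagnosis of where the depth $O(n^{3/2})$ comes from is also incorrect. The step size $h$ in \Cref{lemma: square} costs no depth at all: it is a coefficient choice inside a single layer of two enhanced neurons, so driving the Taylor error down is free in depth (at the price of large coefficients, which this lemma does not need to control). In the paper the depth is dominated entirely by the division: the denominator satisfies $P(x)+P(-x)\geq 2\xi^{\binom{n+1}{2}}\approx e^{-n^{3/2}/2}$, and inverting a quantity that small via the geometric-series scheme of \Cref{lemma: inverse} requires $m=O(n^{3/2})$ layers. That single lower bound on the denominator, plus the matching upper bound $P(x)+P(-x)\leq 2^n$, is the quantitative heart of the proof, and it has no counterpart in your sketch.
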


\begin{proof}
Let $\xi = \exp(-1/\sqrt{n})$ and $P(x) = \prod_{k=0}^{n-1} (x+\xi^k)$. Define
\begin{align*}
    R(x) = x \frac{P(x)}{P(x)+P(-x)}
\end{align*}
By \cite{absoluteValueApproximation}, Theorem (A) we have
\begin{align*}
    |\max\{0,x\}-R(x)| \leq \frac{3}{2}e^{-\sqrt{n}}
\end{align*}
for $x \in [-1,1]$. Thus, it remains to approximate $R$ with the help of a neural network. First of all, we will use $1$ enhanced neuron in each layer except the last two to save the value of $x$ using \Cref{lemma: identity}. In the first $n$ layers we can approximate $P(x)$ and $P(-x)$ arbitrarily well as follows: $4$ enhanced neurons calculate $x^k$ in the $k$th layer by multiplying $x$ and $x^{k-1}$ from the previous layer with \Cref{lemma: square} and \Cref{lemma: product}. In the other $2$ enhanced neurons we add up and save intermediate values for $P(x)$ and $P(-x)$ using \Cref{lemma: identity}. \\

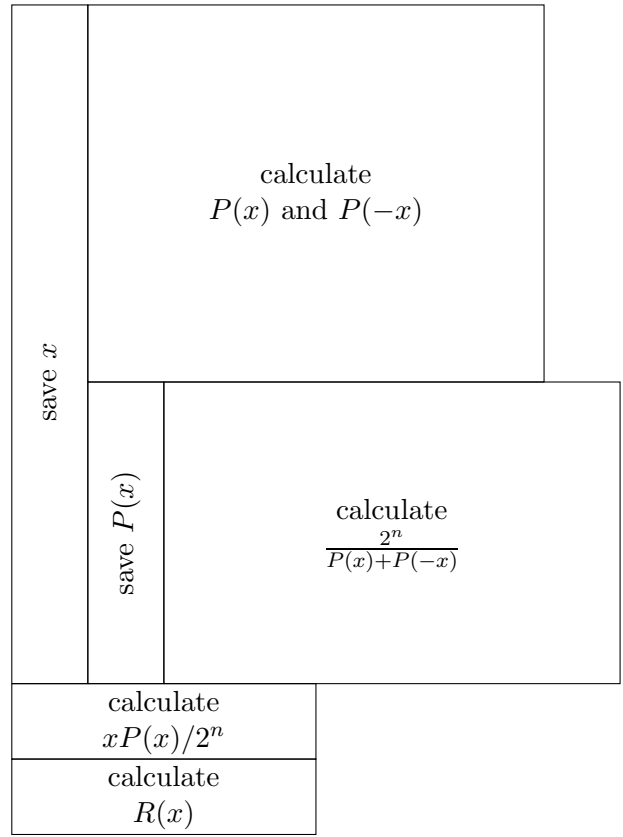
\begin{figure}[h]
\centering
\begin{tikzpicture}
\draw[] (0,1) -- (1,1) -- (1,10) -- (0,10) -- (0,1);
\node[rotate = 90] at (0.5,5) {save $x$}; 
\draw[] (1,1) -- (2,1) -- (2,5) -- (1,5) -- (1,1);
\node[rotate = 90] at (1.5,3) {save $P(x)$}; 
\draw[] (0,1) -- (4,1) -- (4,0) -- (0,0) -- (0,1);
\node[] at (2,0.5) {\begin{tabular}{c} calculate \\ $xP(x)/2^n$ \end{tabular}}; 
\draw[] (0,0) -- (4,0) -- (4,-1) -- (0,-1) -- (0,0);
\node[] at (2,-0.5) {\begin{tabular}{c} calculate \\ $R(x)$ \end{tabular}}; 
\draw[] (1,5) -- (7,5) -- (7,10) -- (1,10) -- (1,5);
\node[] at (4,7.5) {\begin{tabular}{c} calculate \\ $P(x)$ and $P(-x)$ \end{tabular}}; 
\draw[] (2,1) -- (8,1) -- (8,5) -- (2,5) -- (2,1);
\node[] at (5,3) {\begin{tabular}{c} calculate \\ $\frac{2^n}{P(x)+P(-x)}$ \end{tabular}}; 
\end{tikzpicture}
\caption{Schematic Depiction of the Network Architecture}
\end{figure}

For the remaining layers except the last two we use $1$ enhanced neuron to save $P(x)$. Our next goal is to approximate
\begin{align*}
    \frac{2^n}{P(x)+P(-x)}
\end{align*}
using \Cref{lemma: inverse}, so we need some bounds on $z \coloneqq 1-\frac{P(x)+P(-x)}{2^n}$. Observe that for $x \in [\xi^n,1]$
\begin{align*}
    P(x) &= 2x \prod_{k=1}^{n-1} (x+\xi^k) + (-x+\xi^0) \prod_{k=1}^{n-1} (x+\xi^k) \\
    &\geq 2x \prod_{k=1}^{n-1} \xi^k + (-x+\xi^0) \prod_{k=1}^{n-1} |-x+\xi^k| \\
    &\geq 2\xi^n \prod_{k=1}^{n-1} \xi^k + \prod_{k=0}^{n-1} |-x+\xi^k| \\
    &= 2\xi^{n+1 \choose 2} + |P(-x)| \\
    \Rightarrow P(x) + P(-x) &\geq P(x) - |P(-x)| \\
    &\geq 2\xi^{n+1 \choose 2}.
\end{align*}
For $x \in [0,\xi^n]$ every factor of $P(-x)$ is positive, so
\begin{align*}
    P(x) + P(-x) &\geq P(x) \\
    &\geq \prod_{k=0}^{n-1} \xi^k \\
    &\geq \xi^{n \choose 2} \\
    &> 2\xi^{n+1 \choose 2},
\end{align*}
where the last inequality follows from
\begin{align*}
    \xi^n = e^{-\sqrt{n}} \leq e^{-1} < \frac{1}{2}.
\end{align*}
Since $P(x) + P(-x)$ is an even function, we can conclude that the inequality
\begin{align*}
    P(x) + P(-x) \geq 2\xi^{n+1 \choose 2}
\end{align*}
holds for every $x \in [-1,1]$. In addition to that, we have
\begin{align*}
    P(x) + P(-x) &\leq |P(x)| + |P(-x)| \\
    &= (x+1) \prod_{k=1}^{n-1} |x+\xi^k| + (-x+1) \prod_{k=1}^{n-1} |-x+\xi^k| \\
    &\leq (x+1) \prod_{k=1}^{n-1} 2 + (-x+1) \prod_{k=1}^{n-1} 2 \\
    &= 2^n.
\end{align*}
Altogether, this implies
\begin{align*}
    0 \leq z \leq 1 - \frac{\xi^{n+1 \choose 2}}{2^{n-1}}.
\end{align*}
By \Cref{lemma: inverse} we get an approximation with an error of at most
\begin{align*}
    \frac{z^{2^{m+1}}}{1-z} &\leq \frac{2^{n-1}}{\xi^{n+1 \choose 2}} \left(1 - \frac{\xi^{n+1 \choose 2}}{2^{n-1}}\right)^{2^{m+1}} \\
    &\leq \frac{2^{n-1}}{\xi^{n+1 \choose 2}} \exp\left(-\frac{\xi^{n+1 \choose 2}}{2^{n-1}}\right)^{2^{m+1}} \\
    &= \frac{2^{n-1}}{\xi^{n+1 \choose 2}} \exp\left(-2^{m-n+2} \xi^{n+1 \choose 2}\right) \\
    &\leq e^{-\sqrt{n}},
\end{align*}
where the last inequality is true for any
\begin{align*}
    m \geq n-2+\frac{1}{\ln(2)}\left(\frac{\sqrt{n}(n+1)}{2} + \ln\left(\sqrt{n} + \frac{\sqrt{n}(n+1)}{2} + (n-1)\ln(2)\right)\right).
\end{align*}
Hence, we can choose $m = O(n^\frac{3}{2})$. After applying \Cref{lemma: square} once again we need $m$ layers of $6$ enhanced neurons for this step. Finally, we calculate $x\frac{P(x)}{2^n}$ in the second to last layer and approximate $R(x)$ in the last layer. Since 
$x\frac{P(x)}{2^n} \leq 1$ for $x \in [-1,1]$ we calculated $R(x)$ up to an error of $e^{-\sqrt{n}}$. All in all, we need $O(n^\frac{3}{2})$ layers of $8$ neurons to approximate $\max\{0,x\}$ with an error of at most $\frac{5}{2}e^{-\sqrt{n}}$.
\end{proof}

At last, we have everything needed to approximate ReLU networks using a neural network with ReLU-computing activation functions. The idea is to simply replace every activation function by the network given in \Cref{lemma: approximate ReLU}. It is pivotal to consider how the errors are propagated through the network.

\begin{proof}[Proof of \Cref{theorem: ReLU-computing calculates relu}]
Our plan is to replace every ReLU-neuron by the network given in \Cref{lemma: approximate ReLU}. The inputs in the $k$th layer of the original network have an absolute value bounded by $(C(N+1))^{k-1}$. Thus, we need to scale by $(C(N+1))^{1-k}$ before applying \Cref{lemma: approximate ReLU}. Actually, we need to scale by a bit more because of the approximation error introduced in previous layers but this is negligible. \\

Since $\max\{0,x\}$ is Lipschitz continuous with constant $1$, the errors of each layer add up to the error of the entire network. Therefore, we can apply \Cref{lemma: approximate ReLU} with 
\begin{align*}
    n = \ln\left(\frac{L(C(N+1))^{k-1}}{\varepsilon}\right)^2
\end{align*}
to get an error of $\frac{\varepsilon}{L}$ in the $k$th layer. Note that we need $O(n^\frac{3}{2})$ layers in \Cref{lemma: approximate ReLU}. Together with
\begin{align*}
    \sum_{k=1}^L \ln\left(\frac{L(C(N+1))^{k-1}}{\varepsilon}\right)^3 = O(L\ln(\varepsilon^{-1}L)^3 + L^4\ln(CN)^3)
\end{align*}
we get the desired result.
\end{proof}

\newpage

\subsection{Approximating ReLU-computable Functions with ReLU}

This subsection deals with the dual problem of the last subsection. In lieu of approximating ReLU networks with ReLU-computing networks we intend to approximate ReLU-computable networks with ReLU networks. Once again, we start with the easier task of approximating the function $x \mapsto x^2$.

\begin{lemma}\label{lemma: affine approximation of square}
The function $x \mapsto x^2$ can be approximated in $[-1,1]$ by a neural network with $n+3$ layers of $3$ enhanced neurons up to an error of $4^{-n}$. We can choose an approximation, which is Lipschitz continuous with Lipschitz constant $2$ and constantly equal to $1$ outside of $[-1,1]$.
\end{lemma}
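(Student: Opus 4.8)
The plan is to realize the classical sawtooth approximation of $x \mapsto x^2$ iteratively, so that each composition step costs only one extra layer. Let $T(y) = 2\ReLU(y) - 4\ReLU(y - \tfrac{1}{2})$ denote the tent map, which satisfies $T(y) = 2\min\{y,1-y\}$ on $[0,1]$, and let $g_s = T^{\circ s}$ be its $s$-fold composition, a sawtooth with slopes $\pm 2^s$. The key algebraic fact is the telescoping identity: if $f_s$ denotes the piecewise affine interpolant of $x^2$ at the dyadic points $k 2^{-s}$, then $f_{s-1} - f_s = g_s/4^s$, whence
\begin{equation*}
    f_n(x) = x - \sum_{s=1}^n \frac{g_s(x)}{4^s}
\end{equation*}
and $\|x^2 - f_n\|_{L^\infty([0,1])} = 2^{-2n-2} \le 4^{-n}$. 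I would first record this identity and its error bound, since it is the analytic heart of the lemma.

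Next I would turn $f_n$ into a network. Because $x^2$ is even and we want the output to be $\equiv 1$ for $|x| \ge 1$, I first reduce to the clamped absolute value $u = \min\{|x|, 1\}$. Using $|x| = \ReLU(x) + \ReLU(-x)$ and $\min\{|x|,1\} = |x| - \ReLU(|x| - 1)$, a constant number of initial layers compute $u \in [0,1]$; the map $x \mapsto u$ is $1$-Lipschitz. I then run the recursion on the state $(g_s, F_s)$, where $F_s \coloneqq f_s(u) = u - \sum_{t \le s} g_t(u)/4^t$ is the running partial interpolant. Both coordinates are nonnegative, so a single $\ReLU$-neuron transports each of them faithfully. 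Per iteration layer I use exactly three enhanced neurons: $\ReLU(g_s)$ and $\ReLU(g_s - \tfrac{1}{2})$ to form $g_{s+1} = 2\ReLU(g_s) - 4\ReLU(g_s-\tfrac{1}{2})$ via the output affine map, and $\ReLU(F_s)$ to carry $F_s$, after which the same output map produces $F_{s+1} = F_s - g_{s+1}/4^{s+1}$ as an affine combination of these three outputs. After $n$ such layers the last layer outputs $F_n = f_n(u)$, and together with the initial reduction this contributes the additive $3$, giving the claimed depth $n+3$ and width $3$.

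It remains to verify the two qualitative claims. Since $g_s(1) = 0$ for every $s \ge 1$, we have $f_n(1) = 1$ exactly, so the output equals $1$ precisely when $u = 1$, i.e. for $|x| \ge 1$. For the Lipschitz bound, differentiating $f_n$ gives $f_n'(u) = 1 - \sum_{s=1}^n g_s'(u)/4^s$ with $g_s'(u) = \pm 2^s$, so $|f_n'(u)| \le 1 + \sum_{s \ge 1} 2^{-s} = 2$; composing with the $1$-Lipschitz clamp keeps the global Lipschitz constant at $2$, and the function is constant (slope $0$) outside $[-1,1]$.

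The main obstacle, or at least the only non-mechanical point, is pinning down the exact error constant and the telescoping identity $f_{s-1}-f_s = g_s/4^s$; once that is in hand, the network realization is bookkeeping, and the Lipschitz and boundary claims fall out of the explicit slope computation. A secondary subtlety is confirming that every carried quantity ($g_s$ and $F_s$) stays nonnegative so that the identity-carrying $\ReLU$-neurons really act as the identity, which is why I track $F_s = f_s(u) \ge 0$ rather than the raw sum $\sum_{t} g_t/4^t$.
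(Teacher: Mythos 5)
Your proposal is correct and follows essentially the same route as the paper's proof: both realize the dyadic piecewise-linear interpolant of $x^2$ via the telescoping sawtooth identity, using two enhanced neurons per layer to iterate a tent-type map and one neuron (kept nonnegative so that $\ReLU$ acts as the identity) to accumulate the partial sums. The only cosmetic difference is that you clamp the input to $u=\min\{|x|,1\}$ at the start, whereas the paper iterates $h(x)=2|x|-1$ directly on $[-1,1]$ and enforces the value $1$ outside $[-1,1]$ by clamping the output in the final layer.
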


\begin{proof}
Define $h: \mathbb{R} \to \mathbb{R}$ by $h(x) = 2|x|-1$. We denote by $h^{(n)}$ the function $h$ applied $n$ times. Observe that for $x \in [-1,1]$ the functions $h^{(n)}$ alternates between $-1$ and $1$ at constant pace $2^{n-1}$ times. Let 
\begin{align*}
    I_{n,k} \coloneqq [k2^{-n}, (k+1)2^{-n}]
\end{align*}
and $L_{n,k}$ be the affine function connecting the points 
\begin{align*}
    (k2^{-n}, (k2^{-n})^2) \quad \text{and} \quad ((k+1)2^{-n}, ((k+1)2^{-n})^2).
\end{align*}
Define $g^{(0)}(x) = 1 + \frac{1}{2}(h(x)-1)$
\begin{align*}
    g^{(n+1)}(x) = g^{(n)}(x) + 2^{-2n-3} (h^{(n+2)}(x)-1).
\end{align*}

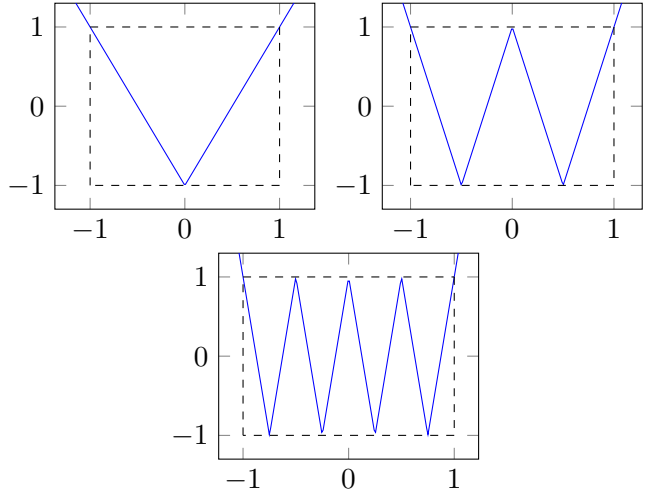
\begin{figure}[h]
    \centering
    \begin{tikzpicture}[
        evaluate={
            function f(\x) {
                return 2*abs(\x) - 1;
            };
        },
    ]
    \begin{axis}[no markers,ymin=-1.3,ymax=1.3]
    \addplot+[domain=-1.3:1.3,samples=200,color=blue]{f(x)};
    \addplot+[dashed,color=black] coordinates {(-1,-1)(-1,1)};
    \addplot+[dashed,color=black] coordinates {(-1,1)(1,1)};
    \addplot+[dashed,color=black] coordinates {(1,1)(1,-1)};
    \addplot+[dashed,color=black] coordinates {(1,-1)(-1,-1)};
    \end{axis}
    \end{tikzpicture}
    \begin{tikzpicture}[
        evaluate={
            function f(\x) {
                return 2*abs(\x) - 1;
            };
        },
    ]
    \begin{axis}[no markers,ymin=-1.3,ymax=1.3]
    \addplot+[domain=-1.3:1.3,samples=200,color=blue]{f(f(x))};
    \addplot+[dashed,color=black] coordinates {(-1,-1)(-1,1)};
    \addplot+[dashed,color=black] coordinates {(-1,1)(1,1)};
    \addplot+[dashed,color=black] coordinates {(1,1)(1,-1)};
    \addplot+[dashed,color=black] coordinates {(1,-1)(-1,-1)};
    \end{axis}
    \end{tikzpicture}
    \begin{tikzpicture}[
        evaluate={
            function f(\x) {
                return 2*abs(\x) - 1;
            };
        },
    ]
    \begin{axis}[no markers,ymin=-1.3,ymax=1.3]
    \addplot+[domain=-1.3:1.3,samples=200,color=blue]{f(f(f(x)))};
    \addplot+[dashed,color=black] coordinates {(-1,-1)(-1,1)};
    \addplot+[dashed,color=black] coordinates {(-1,1)(1,1)};
    \addplot+[dashed,color=black] coordinates {(1,1)(1,-1)};
    \addplot+[dashed,color=black] coordinates {(1,-1)(-1,-1)};
    \end{axis}
    \end{tikzpicture}
    \caption{Graphs of $h^{(n)}$ for $n=1,2,3$}
\end{figure}

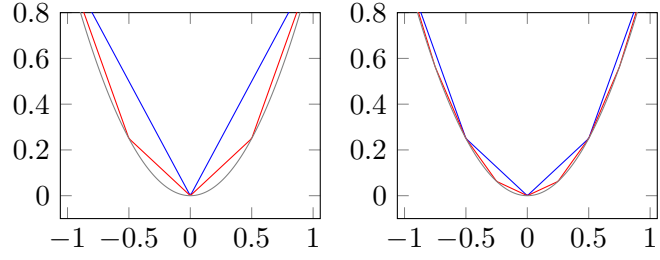
\begin{figure}[h]
    \centering
    \begin{tikzpicture}[
        evaluate={
            function f(\x) {
                return 2*abs(\x) - 1;
            };
        },
    ]
    \begin{axis}[no markers,ymin=-0.1,ymax=0.8]
    \addplot+[domain=-1.3:1.3,samples=200,color=blue]{1+1/2*(f(x)-1)};
    \addplot+[domain=-1.3:1.3,samples=200,color=red]{1+1/2*(f(x)-1)+1/8*(f(f(x))-1)};
    \addplot+[domain=-1.3:1.3,samples=200,color=gray]{x^2};
    \end{axis}
    \end{tikzpicture}
    \begin{tikzpicture}[
        evaluate={
            function f(\x) {
                return 2*abs(\x) - 1;
            };
        },
    ]
    \begin{axis}[no markers,ymin=-0.1,ymax=0.8]
    \addplot+[domain=-1.3:1.3,samples=200,color=blue]{1+1/2*(f(x)-1)+1/8*(f(f(x))-1)};
    \addplot+[domain=-1.3:1.3,samples=200,color=red]{1+1/2*(f(x)-1)+1/8*(f(f(x))-1)+1/32*(f(f(x))-1)};
    \addplot+[domain=-1.3:1.3,samples=200,color=gray]{x^2};
    \end{axis}
    \end{tikzpicture}
    \caption{Construction of $g^{(n)}$}
\end{figure}

We would like to prove that $g^{(n)}|_{I_{n,k}} = L_{n,k}$ for $k \in \{-2^n,\dots,2^n-1\}$ by induction on $n$. The base case is obvious. Assume that we have already proved the claim for some $n$ and any $k \in \{-2^n,\dots,2^n-1\}$. Since $g^{(n)}$ and $h^{(n+2)}$ are affine in $I_{n+1,2k}$ and $I_{n+1,2k+1}$, the same holds true for $g^{(n+1)}$. Thus, it suffices to calculate the value of $g^{(n+1)}$ at the endpoints of those two intervals. Because of
\begin{align*}
    h^{(n+2)}(2k2^{-n-1}) = h^{(n+2)}((2k+2)2^{-n-1}) = 1
\end{align*}
we have
\begin{align*}
    g^{(n+1)}(2k2^{-n-1}) &= g^{(n)}(k2^{-n}) = (k2^{-n})^2 \quad \text{and} \\
    g^{(n+1)}((2k+2)2^{-n-1}) &= g^{(n)}((k+1)2^{-n}) = ((k+1)2^{-n})^2.
\end{align*}
The other two endpoints are both at $(2k+1)2^{-n-1}$ and we get
\begin{align*}
    g^{(n)}((2k+1)2^{-n-1}) &= \frac{1}{2}(g^{(n)}(k2^{-n}) + g^{(n)}((k+1)2^{-n})) \\
    &= \left(k^2+k+\frac{1}{2}\right)4^{-n} \quad \text{and} \\
    h^{(n+2)}((2k+1)2^{-n-1}) &= -1 \\
    \Rightarrow g^{(n+1)}((2k+1)2^{-n-1}) &= \left(k^2+k+\frac{1}{2}\right)4^{-n} + 2^{-2n-3} (-2) \\
    &= ((2k+1)2^{-n-1})^2,
\end{align*}
which concludes the induction step. Next up, we would like to show that $g^{(n)}$ is a good approximation for $x^2$. Observe that for $x \in [k2^{-n}, (k+1)2^{-n}]$ we can write $x = k2^{-n} + \delta$ with $\delta \in [0,2^{-n}]$ and get
\begin{align*}
    |g(x) - x^2| &= |2^{-n}(2k+1) (x-k2^{-n}) + k^2 4^{-n} - x^2| \\
    &= |2^{-n}(2k+1) \delta + k^2 4^{-n} - (k2^{-n} + \delta)^2| \\
    &= |\delta (2^{-n} - \delta)| \\
    &\leq 4^{-n}.
\end{align*}
Furthermore, $g^{(n)}$ is Lipschitz continuous with constant $2$. It remains to set up the neural network and make sure that that the function is equal to $1$ outside of $[-1,1]$. We use $2$ enhanced neurons in the $m$th layer to iteratively calculate $h^{(m)}$. In the third enhanced neuron we are calculating $g^{(m-2)}$ using
\begin{align*}
    g^{(m-2)} = \max\{0, g^{(m-3)}(x) + 2^{-2m+3}(h^{(m-1)}(x)-1) + 1\} - 1.
\end{align*}
In the last layer we compose $g^{(n)}$ with the function
\begin{align*}
    k(x) = 
    \begin{cases}
        x &\text{for } x \in (-\infty,1], \\
        1 &\text{otherwise}
    \end{cases}
\end{align*}
to get an approximation of $x \mapsto x^2$ with the desired properties.
\end{proof}

As we have already seen in \Cref{lemma: product}, an approximate square can be used to multiply two numbers. This in turn allows us to approximate general polynomials with ReLU networks.

\begin{lemma}\label{lemma: affine approximation of polynomial}
Let $\varepsilon > 0$. Any polynomial of degree $m$ with coefficients bounded by $C$ can be approximated in $[-1,1]$ by a neural network with $O(m \ln(2C\varepsilon^{-1}m))$ layers of $8$ neurons with ReLU activation function up to an error of $\varepsilon$.
\end{lemma}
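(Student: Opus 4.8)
The plan is to evaluate the polynomial $p(x)=\sum_{i=0}^m a_i x^i$ by Horner's scheme, which expresses $p$ through $m$ nested multiplications: setting $b_m=a_m$ and $b_k=a_k+x\,b_{k+1}$ for $k=m-1,\dots,0$ yields $p(x)=b_0$. Each multiplication $x\cdot b_{k+1}$ is realised via the identity $uv=\tfrac14((u+v)^2-(u-v)^2)$ from \Cref{lemma: product}, where the squares are replaced by the affine approximation of \Cref{lemma: affine approximation of square}. Since that approximation is only valid on $[-1,1]$, I first rescale: with $B\coloneqq(m+1)C$ the intermediate values satisfy $|b_k|\le B$, so $\tfrac{x\pm b_{k+1}}{2B}\in[-1,1]$, and $x\,b_{k+1}=B^2\big((\tfrac{x+b_{k+1}}{2B})^2-(\tfrac{x-b_{k+1}}{2B})^2\big)$ recovers the product after squaring and scaling back by $B^2$.

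Next I would assemble the network. For one multiplication I run two copies of the square network of \Cref{lemma: affine approximation of square} in parallel, fed with the affine inputs $\tfrac{x+b_{k+1}}{2B}$ and $\tfrac{x-b_{k+1}}{2B}$ (the leading affine map of each copy absorbs this rescaling). This costs $2\cdot 3=6$ neurons per layer, while $2$ further neurons carry the input $x$ forward through all layers by the exact ReLU identity $x=\max\{x,0\}-\max\{-x,0\}$, for a total width of $8$. No separate accumulator is needed: in Horner's scheme the product being computed is exactly the next value $b_k$, and the affine combination $a_k+B^2(\widetilde{u^2}-\widetilde{v^2})$ is folded into the first affine map of the following block. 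With $n+3$ layers per square network and $m$ multiplications, the depth is $O(m(n+3))$.

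For the error I would track $e_k\coloneqq\tilde b_k-b_k$. The crucial observation is that the rescaling does not amplify inherited errors: writing out $B^2(\tilde u^2-\tilde v^2)-x\,b_{k+1}$ with $\tilde u,\tilde v$ built from $\tilde b_{k+1}$, the cross terms combine to $x\,e_{k+1}$, so the propagation factor is $|x|\le 1$ rather than the naively feared $B$. Each block additionally contributes the squaring error, which after scaling by $B^2$ is at most $2B^2\cdot 4^{-n}$. Hence $|e_0|\le 2mB^2 4^{-n}=O(m^3C^2 4^{-n})$, and choosing $n=O(\ln(mC\varepsilon^{-1}))$ forces $|e_0|\le\varepsilon$. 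The resulting depth $m(n+3)=O(m\ln(2C\varepsilon^{-1}m))$ is exactly the claimed bound.

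The main obstacle is precisely this error bookkeeping: a careless estimate suggests that undoing the $1/(2B)$ rescaling multiplies accumulated errors by $B$, which over $m$ steps would be catastrophic. The resolution is the algebraic cancellation in $u^2-v^2$, which reflects that this combination computes the product \emph{exactly} when the squares are exact, so only the genuine approximation error is magnified by $B^2$ while inherited errors pass through with factor $\le 1$. The remaining points — verifying $u,v\in[-1,1]$, confirming that width $8$ suffices because Horner fuses the accumulator into the product, and bounding the intermediate $|b_k|$ by $B$ — are routine.
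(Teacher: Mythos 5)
Your proof is correct, but it takes a genuinely different route from the paper. The paper computes the monomials $x^k$ iteratively (one multiplication $x\cdot x^{k-1}$ per block, via \Cref{lemma: product} and \Cref{lemma: affine approximation of square}) and accumulates the partial sums $\sum_{i\le k} a_i x^i$ in a dedicated extra neuron; its width-$8$ budget is $1$ neuron to save $x$, $6$ for the product, $1$ for the running sum. Because all intermediates $x^k$ lie in $[-1,1]$, no rescaling is ever needed there, and the error analysis is immediate: the squaring errors just add up (the map $y\mapsto xy$ has Lipschitz constant $\le 1$), so the squares are computed to precision $\varepsilon/(2Cm(m-1))$ and one takes $n=O(\ln(2C\varepsilon^{-1}m))$. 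You instead use Horner's scheme, whose intermediates $b_k$ are only bounded by $B=(m+1)C$; this forces the rescaling by $1/(2B)$ and makes the error propagation the crux of the argument. Your resolution is the right one and is the genuinely nontrivial point of your approach: since $B^2(u^2-v^2)$ reproduces $x\tilde b_{k+1}$ \emph{exactly}, inherited errors pass through with factor $|x|\le 1$ and only the fresh squaring error is magnified by $B^2$, giving $|e_0|\le 2mB^2 4^{-n}$ and the same depth bound. What each approach buys: the paper's monomial accumulation keeps every multiplicand in $[-1,1]$ and so avoids the amplification issue entirely at the cost of a separate accumulator neuron; Horner's scheme fuses the accumulation into the multiplication chain but requires your cancellation observation to avoid a spurious factor $B^m$. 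One small gap to patch: your claim $\tfrac{x\pm b_{k+1}}{2B}\in[-1,1]$ needs $1+|b_{k+1}|\le 2B$, which fails if $(m+1)C<1$ (and, once errors are present, needs $B\ge 1+\varepsilon$); taking $B=(m+1)C+1$ fixes this without affecting any asymptotics. With that adjustment your argument is complete and matches the claimed bound.
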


\begin{proof}
We use $1$ neuron in each layer to save $x$ with the help of
\begin{align*}
    x = \max\{0,x+1\}-1
\end{align*}
for $x \in [-1,1]$. Using \Cref{lemma: product} and \Cref{lemma: affine approximation of square} we can iteratively calculate $x^k$ from $x^{k-1}$ and $x$ using $6$ neurons per layer. We use the final neuron in each layer to save our intermediate result, which consists of the summands up until $x^k$. This can be done with only $1$ neuron because the intermediate results are uniformly bounded below in $[-1,1]$ by some constant. \\

It remains to calculate the error. The error in one application of \Cref{lemma: product} is the sum of the errors of the two squares used in that lemma. Since $x \mapsto xy$ is Lipschitz continuous with constant $1$ for $y \in [-1,1]$, those errors just sum up to the final error in $x^k$. Assume that this error is bounded by $\frac{\varepsilon}{Cm}$. Then the overall error in the polynomial is bounded by $\varepsilon$, which is exactly what we want. \\

Note that we need $m-1$ multiplications to get to $x^m$ and each of them needs two squares. Hence, we need to calculate the squares with a precision of $\frac{\varepsilon}{2Cm(m-1)}$. In order to do this we choose 
\begin{align*}
    n = O\left[\ln \left(\frac{2Cm(m-1)}{\varepsilon}\right)\right]
\end{align*}
in \Cref{lemma: affine approximation of square}. All in all, we need $O(mn)$ layers, which can be expressed as $O(m \ln(2C\varepsilon^{-1}m))$.
\end{proof}

Before we dive into the next part of the proof let us ponder on \Cref{def: ReLU-computable}, where ReLU-computable activation functions were defined. The inequality in the second condition implies that $\rho$ is Lipschitz continuous with constant $1$. It is worth noting that we can relax the second condition a bit. For any finite set $F \subset \mathbb{R}$ it suffices if $\rho$ satisfies the condition on $\mathbb{R} \setminus F$. All of the results in this section will still be true but it would make the proofs a bit harder to read. \\

A lot of the commonly used activation functions are ReLU-computable (in the slightly more general sense). At some point Fa\`a di Bruno's formula will come in handy, which is stated in the following.
\begin{theorem}[\cite{faaDiBruno}, Introduction]
\label{theorem: faaDiBruno}
Let $f$ and $g$ be $n$ times differentiable functions. Then we have
\begin{align*}
    D^n (f \circ g) = \sum \frac{n!}{k_1!\cdots k_n!} (D^{k_1 + \dots + k_n} f \circ g) \prod_{m=1}^n \left(\frac{D^m g}{m!}\right)^{k_m},
\end{align*}
where the sum runs over all $n$-tuples $(k_1,\dots,k_n)$ satisfying
\begin{align*}
    \sum_{i=1}^n ik_i = n.
\end{align*}
\end{theorem}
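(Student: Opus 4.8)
The plan is to prove the set-partition reformulation of the formula by induction on $n$ and then convert it into the stated multi-index form by a counting argument. The dictionary is that a tuple $(k_1,\dots,k_n)$ with $\sum_i ik_i = n$ records a \emph{block-size profile}: a set partition of $\{1,\dots,n\}$ having exactly $k_m$ blocks of size $m$ for every $m$. Under this dictionary the claimed identity becomes
\[
    D^n(f\circ g) = \sum_{\pi} (D^{|\pi|} f \circ g) \prod_{B\in\pi} D^{|B|} g,
\]
where $\pi$ ranges over all set partitions of $\{1,\dots,n\}$, $|\pi|$ is the number of blocks, and $|B|$ is the cardinality of a block $B$. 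I would establish this cleaner version first.

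For the base case $n=1$ there is exactly one partition (a single singleton), and the formula reduces to the ordinary chain rule $D(f\circ g) = (Df\circ g)\,Dg$. For the inductive step, assume the partition formula at level $n$ and differentiate both sides once more. By the product rule, differentiating a single summand $(D^{|\pi|} f\circ g)\prod_{B} D^{|B|}g$ produces $|\pi|+1$ terms: one from hitting the factor $D^{|\pi|}f\circ g$, which by the chain rule yields $(D^{|\pi|+1} f\circ g)\,Dg\,\prod_{B} D^{|B|}g$, and one for each block $B$, in which the factor $D^{|B|}g$ is replaced by $D^{|B|+1}g$. The key observation is that these $|\pi|+1$ terms are in exact correspondence with the $|\pi|+1$ ways of extending $\pi$ to a partition of $\{1,\dots,n+1\}$: adjoining the new element $n+1$ as a fresh singleton block (which raises the order of $f$ by one and inserts a new factor $Dg$, matching the chain-rule term), or inserting $n+1$ into one of the existing blocks $B$ (which raises $|B|$ to $|B|+1$, matching the corresponding product-rule term). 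Since every partition of $\{1,\dots,n+1\}$ arises from a unique partition of $\{1,\dots,n\}$ by exactly one such move, summing over all $\pi$ reproduces the partition formula at level $n+1$.

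It then remains to pass from the partition formula to the stated form. Grouping partitions by their block-size profile $(k_1,\dots,k_n)$, every partition with a given profile contributes the identical summand $(D^{k_1+\dots+k_n}f\circ g)\prod_m (D^m g)^{k_m}$, so I only need the number of set partitions of an $n$-element set with $k_m$ blocks of size $m$. A standard multinomial count — distribute the $n$ labels into the prescribed blocks and divide by the symmetries within and among equal-sized blocks — gives this count as $\frac{n!}{\prod_{m=1}^n (m!)^{k_m}\, k_m!}$. Multiplying by $\prod_m (D^m g)^{k_m}$ and absorbing the factors $(m!)^{-k_m}$ into $\bigl(D^m g/m!\bigr)^{k_m}$ produces the coefficient $\tfrac{n!}{k_1!\cdots k_n!}$ and the product displayed in the theorem.

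I expect the main obstacle to be the bookkeeping in the inductive step: one must verify that the correspondence between "terms produced by one differentiation" and "ways to grow a partition by one element" is genuinely a bijection that preserves each summand, not merely a matching of the number of terms. Running the induction directly in the multi-index variables $(k_1,\dots,k_n)$ would instead force a delicate check that the coefficients $\tfrac{n!}{\prod_i k_i!}$ transform correctly under the two differentiation operations; routing the argument through set partitions is exactly what turns this coefficient juggling into a clean combinatorial bijection.
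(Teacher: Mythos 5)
The paper does not prove this statement at all: Fa\`a di Bruno's formula is imported as a classical result, cited directly from the literature, and is only used as a tool (to expand derivatives of $e^{-x^2}$ in the discussion of ReLU-computable activation functions). So there is no proof in the paper to compare against; your proposal stands on its own. It is correct, and it is the standard proof: the set-partition form $D^n(f\circ g)=\sum_{\pi}(D^{|\pi|}f\circ g)\prod_{B\in\pi}D^{|B|}g$ established by induction, with the inductive step given by the bijection between partitions of $\{1,\dots,n+1\}$ and pairs (partition of $\{1,\dots,n\}$, move adjoining $n+1$ either as a singleton or into an existing block), followed by the multinomial count $\frac{n!}{\prod_m (m!)^{k_m}k_m!}$ of partitions with block-size profile $(k_1,\dots,k_n)$, which absorbs into the stated coefficient $\frac{n!}{k_1!\cdots k_n!}\prod_m(m!)^{-k_m}$. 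Both the bijection and the count are verified correctly, and the regularity bookkeeping is unproblematic: under the hypothesis that $f$ and $g$ are $(n+1)$-times differentiable, the level-$n$ identity holds in a neighbourhood and its right-hand side is a sum of products of differentiable functions, so differentiating it termwise is legitimate.
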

Here are some examples of ReLU-computable activation functions:

\begin{itemize}
    \item Gaussian ($e^{-x^2}$): The first property is obvious, so we focus on the second one. We have
    \begin{align*}
        \frac{\mathrm{d}^n}{\mathrm{d}x^n} e^{-x^2} = H_n(x) e^{-x^2},
    \end{align*}
    where $H_n$ denotes the so-called $n$th Hermite polynomial. By \Cref{theorem: faaDiBruno} we have
    \begin{align*}
        H_{2m}(x) &= (2m)! \sum_{l=0}^m \frac{(-1)^{m-l}}{(2l)!(m-l)!} (2x)^{2l} \quad \text{and} \\
        H_{2m+1}(x) &= (2m+1)! \sum_{l=0}^m \frac{(-1)^{m-l}}{(2l+1)!(m-l)!} (2x)^{2l+1}.
    \end{align*}
    Observe that $|x^n e^{-x^2}|$ has global maxima at $x = \pm \sqrt{\frac{n}{2}}$ and
    \begin{align*}
        |x^n e^{-x^2}| \leq \left(\frac{n}{2}\right)^\frac{n}{2} e^{-\frac{n}{2}}.
    \end{align*}
    This implies
    \begin{align*}
        \frac{1}{(2m)!}\left|\frac{\mathrm{d}^{2m}}{\mathrm{d}x^{2m}} e^{-x^2}\right| &= \left|\sum_{l=0}^m \frac{(-1)^{m-l}}{(2l)!(m-l)!} (2x)^{2l} e^{-x^2}\right| \\
        &\leq \sum_{l=0}^m \frac{1}{(2l)!(m-l)!} 2^{2l} l^l e^{-l} \\
        &= \sum_{l=0}^m \frac{\left(\frac{2l}{e}\right)^{2l}}{(2l)!} \frac{1}{\left(\frac{l}{e}\right)^l} \frac{1}{(m-l)!} \\
        &\leq \sum_{l=0}^m 1 \cdot \frac{1}{l!} \cdot  \frac{1}{(m-l)!} \\
        &= \frac{1}{m!} \sum_{l=0}^m {m \choose l} \\
        &= \frac{2^m}{m!},
    \end{align*}
    where the second inequality follows from Stirling. The estimate for $2m+1$ works similarly.

    \item Logistic $\left(\frac{1}{1+e^{-x}}\right)$: Once again, the first property is immediate to see. For the second property we pick $\frac{3\pi}{4} < r < \pi$. By Cauchy's integral formula we have
    \begin{align*}
        \frac{1}{n!} \frac{\mathrm{d}^n}{\mathrm{d}x^n} \frac{e^x}{1+e^x} &= \frac{1}{2\pi i} \oint_{\partial B(x,r)} \frac{e^z}{(1+e^z)(z-x)^{n+1}} \, \mathrm{d}z \\
        &= \frac{1}{2\pi i} \int_0^{2\pi} \frac{e^{x+re^{it}}}{(1+e^{x+re^{it}}) (re^{it})^{n+1}} \cdot rie^{it} \, \mathrm{d}t
    \end{align*}
    since $\frac{e^x}{1+e^x}$ is holomorphic in $\{z \in \mathbb{C}: |\I(z)| < \pi\}$. This implies
    \begin{align*}
        \frac{1}{n!} \left|\frac{\mathrm{d}^n}{\mathrm{d}x^n} \frac{e^x}{1+e^x}\right| \leq \frac{1}{2\pi} \int_0^{2\pi} \left|\frac{e^{x+re^{it}}}{1+e^{x+re^{it}}}\right| r^{-n} \, \mathrm{d}t.
    \end{align*}
    Note that
    \begin{align*}
        |1+e^z| = |1 + \cos(\I(z)) e^{\R(z)} + i \sin(\I(z)) e^{\R(z)}|
    \end{align*}
    For $|\I(z)| \leq \frac{\pi}{4}$ we have $\cos(\I(z)) \geq \frac{1}{\sqrt{2}}$ and therefore
    \begin{align*}
        |1+e^z| \geq 1 + \cos(\I(z)) e^{\R(z)} \geq \frac{1}{\sqrt{2}} e^{\R(z)} \geq \sin(r) e^{\R(z)}.
    \end{align*}
    For $\frac{\pi}{4} \leq |\I(z)| \leq r$ we have $|\sin(\I(z))| \geq \sin(r)$ and thus
    \begin{align*}
        |1+e^z| \geq \sin(\I(z)) e^{\R(z)} \geq \sin(r) e^{\R(z)}.
    \end{align*}
    Together with $|e^z| = e^{\R(z)}$ we can conclude that
    \begin{align*}
        \frac{1}{2\pi} \int_0^{2\pi} \left|\frac{e^{x+re^{it}}}{1+e^{x+re^{it}}}\right| r^{-n} \, \mathrm{d}t &\leq \frac{1}{2\pi} \int_0^{2\pi} \frac{1}{\sin(r)} r^{-n} \, \mathrm{d}t \\
        &= \frac{1}{\sin(r) r^n}.
    \end{align*}

    \item $\tanh(x)$: Observe that
    \begin{align*}
        \tanh(x) = \frac{2}{1+e^{-2x}} - 1.
    \end{align*}
    The estimate of the Logistic function gives
    \begin{align*}
        \frac{1}{n!} \left|\frac{\mathrm{d}^n}{\mathrm{d}x^n} \tanh(x)\right| \leq \frac{2^{n+1}}{\sin(r)r^n}.
    \end{align*}
    With $r = \frac{5\pi}{6}$ we get
    \begin{align*}
        \frac{2^{n+1}}{\sin(r)r^n} &= \frac{2^{n+1}}{\frac{1}{2} \left(\frac{5\pi}{6}\right)^n} \\
        &= 4\left(\frac{12}{5\pi}\right)^n.
    \end{align*}
    Hence, almost all of the properties are immediate consequences of the fact that the Logistic function is ReLU-computable. It only remains to check that
    \begin{align*}
        \frac{1}{n!} \left|\frac{\mathrm{d}^n}{\mathrm{d}x^n} \tanh(x)\right| \leq 1
    \end{align*}
    for $n = 1,\dots,5$, which is a matter of calculation.
    
    \item Softplus ($\ln(1+e^x)$): For the first property we use the well-known inequality
    \begin{align*}
        \ln(t) \leq t - 1.
    \end{align*}
    We can conclude that
    \begin{align*}
        0 &\leq \ln(1+e^x) - x \\
        &= \ln(1+e^{-x}) \\
        &\leq e^{-x}
    \end{align*}
    as well as
    \begin{align*}
        0 &\leq \ln(1+e^x) \\
        &\leq e^x
    \end{align*}
    and therefore
    \begin{align*}
        \ln(1+e^x) - \max\{0,x\} \xrightarrow{|x| \to \infty} 0
    \end{align*}
    exponentially quickly. The second property follows from the fact that the derivative of the Softplus function is the Logistic function.

    \item ELU ($e^x-1$ for $x \leq 0$ and $x$ for $x > 0$): As already mentioned, one can relax the definition of a ReLU-computable function by allowing finitely many exception points where the function is not smooth. All of the proofs still work out. We opted not to do that here to make the proofs easier to read. However, with this relaxed definition ELU is also ReLU-computable.

    \item Leaky ReLU ($0.01x$ for $x \leq 0$ and $x$ for $x > 0$): Again, this function would satisfy a relaxed version of the definition of a ReLU-computable function. But for this activation function there are much simpler ways to prove the results of this section.
\end{itemize}

By now we have seen that there are lots of ReLU-computable activation functions. But why are they useful? The answer is that ReLU networks can approximate ReLU-computable functions, as we will show in the following.

\begin{lemma}\label{lemma: affine approximation of activation function}
Let $\rho: \mathbb{R} \to \mathbb{R}$ be ReLU-computable. A neural network of depth $O(\ln(\varepsilon^{-1})^3)$ and width $11$ with ReLU activation function can approximate $\rho$ with an error of at most $\varepsilon$.
\end{lemma}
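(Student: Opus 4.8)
The plan is to exploit the two defining properties of a ReLU-computable function separately: property~1 of \Cref{def: ReLU-computable} to deal with the tails and property~2 to deal with a bounded central region. First I would use the exponential decay $\rho(x)-l(x)\to 0$ to fix a cutoff $M=O(\ln(\varepsilon^{-1}))$ with $|\rho(x)-l(x)|<\varepsilon$ for $|x|\ge M$. The piecewise affine function $l$ has finitely many breakpoints, so it is computed exactly by a ReLU network of constant width and depth; on the tails it already approximates $\rho$ to within $\varepsilon$. It therefore remains to approximate $\rho$ on $[-M,M]$.

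On $[-M,M]$ I would not try to use a single polynomial, since its degree or coefficients would blow up. Property~2 gives $\tfrac{1}{n!}|\rho^{(n)}(x)|\le C\mu^n$ for some $\mu<1$ uniformly in $x$, so $\rho$ is real analytic with radius of convergence $>1$ at every point. Cover $[-M,M]$ by $O(M)=O(\ln(\varepsilon^{-1}))$ unit patches centred at nodes $a_j$, and let $p_j$ be the degree-$m$ Taylor polynomial of $\rho$ at $a_j$. On the length-$2$ support of the tent at $a_j$ the truncation error is $\sum_{n>m}C\mu^n=O(\mu^m)$, so $m=O(\ln(\varepsilon^{-1}))$ suffices; after the shift $t=x-a_j$ the coefficients are exactly $\tfrac{\rho^{(n)}(a_j)}{n!}$, hence bounded by $1$. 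Thus \Cref{lemma: affine approximation of polynomial} produces, for each patch, a width-$8$ ReLU network of depth $O(m\ln(2\varepsilon^{-1}m))=O(\ln(\varepsilon^{-1})^2)$ computing $p_j$ on $[-1,1]$.

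To glue the patches I would use the standard piecewise affine partition of unity of tent functions $\phi_j$ subordinate to the cover, each computed exactly by ReLU, and output
\[
l(x)+\sum_j \phi_j(x)\bigl(p_j(x)-l(x)\bigr).
\]
On each subinterval two consecutive tents sum to $1$, so this telescopes to $\phi_jp_j+\phi_{j+1}p_{j+1}\approx \rho$; where all tents vanish it reduces to $l\approx\rho$. The residuals $p_j-l$ are bounded on $\operatorname{supp}\phi_j$, so each product $\phi_j(p_j-l)$ is computed by \Cref{lemma: product} (via the clipped square of \Cref{lemma: affine approximation of square}) in $O(\ln(\varepsilon^{-1}))$ extra layers after rescaling into $[-1,1]$. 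Processing the $O(\ln(\varepsilon^{-1}))$ patches sequentially and accumulating the sum gives total depth $O(\ln(\varepsilon^{-1}))\cdot O(\ln(\varepsilon^{-1})^2)=O(\ln(\varepsilon^{-1})^3)$; the width stays at $11$, namely the $8$ neurons of the polynomial network plus $2$ to carry the global input $x$ across patches and $1$ to hold the running total.

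The main obstacle is exactly that the relevant interval $[-M,M]$ grows like $\ln(\varepsilon^{-1})$: a global polynomial approximation is impossible without exponentially large coefficients, so one is forced into the patch-and-partition-of-unity scheme, and since the width is capped at $11$ the patches cannot be run in parallel but must be composed in depth. The delicate points are then bookkeeping the errors (the per-patch polynomial error, the per-patch product error, and the tail error from replacing $\rho$ by $l$) so that they sum to $O(\varepsilon)$, and reusing the neurons of \Cref{lemma: affine approximation of polynomial} tightly enough that the combined construction still fits into width $11$.
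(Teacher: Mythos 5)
Your proposal is correct and follows essentially the same route as the paper's proof: truncate the tails using the piecewise affine function $l$, cover the remaining interval of length $O(\ln(\varepsilon^{-1}))$ by unit patches, approximate $\rho$ on each patch by a degree-$O(\ln(\varepsilon^{-1}))$ Taylor polynomial with coefficients bounded by $1$, compute each polynomial via \Cref{lemma: affine approximation of polynomial}, glue with a piecewise affine partition of unity multiplied in via \Cref{lemma: product}, and accumulate the patches sequentially within width $11 = 8+2+1$. Your explicit gluing formula $l+\sum_j \phi_j(p_j-l)$ is in fact a slightly cleaner rendering of the paper's step of "adding $l(x)$ at the end", and the tent versus trapezoid choice of partition of unity is immaterial.
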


\begin{proof}
Let $0 < \varepsilon < \frac{1}{3}$. Since $\rho$ is ReLU-computable, there is a piecewise affine function $l: \mathbb{R} \to \mathbb{R}$ such that $|\rho(x)-l(x)|$ decays exponentially quickly. Hence, there is a compact interval $I$ of size $O(\ln(\varepsilon^{-1}))$ such that $|\rho(x)-l(x)| < \varepsilon$ outside of that interval. We can cover $I$ by $O(\ln(\varepsilon^{-1}))$ intervals of size $2$ such that neighboring intervals overlap by exactly $\varepsilon$. For each of those intervals $J$ we consider the Taylor expansion around its midpoint $a_J$. The $m$th Taylor polynomial $P_m$ satisfies
\begin{align*}
    |\rho(x)-P_m(x)| &= \left|\frac{\rho^{(m+1)}(\xi)}{m!} (x-a_J)^{m+1}\right| \\
    &\leq \left|\frac{\rho^{(m+1)}(\xi)}{m!}\right|
\end{align*}
for some $\xi \in J$. Since $\rho$ is ReLU-computable, we know 
that $\left|\frac{\rho^{(m+1)}(\xi)}{m!}\right|$ converges to $0$ exponentially quickly for $n \to \infty$. Therefore, we can pick $m = O(\ln(\varepsilon^{-1}))$ to approximate $\rho$ in $J$ with an error of at most $\varepsilon$. \\

We proceed with the design of the neural network. We apply \Cref{lemma: affine approximation of polynomial} to calculate those Taylor polynomials with the help of $O(m \ln(\varepsilon^{-1}m))$ layers of $8$ neurons. Here we used that the coefficients of the Taylor polynomial are bounded by $1$ since $\rho$ is ReLU-computable. In addition to that, we use $2$ neurons in each layer to save the value of $x$. \\

After calculating one of those polynomials, we calculate a translated version of the function
\begin{align*}
    x \mapsto 
    \begin{cases}
        \frac{1+x}{\varepsilon} &\text{for } x \in [-1,-1+\varepsilon], \\
        1 &\text{for } x \in [-1+\varepsilon,1-\varepsilon], \\
        \frac{1-x}{\varepsilon} &\text{for } x \in [1-\varepsilon, 1], \\
        0 &\text{otherwise}
    \end{cases}
\end{align*}
and multiply it with the polynomial using \Cref{lemma: product} and \Cref{lemma: affine approximation of square}. This gives us a partition of unity, so that we get a good approximation for $f(x)$ in the overlapping regions as well. Note that multiplying with $0$ using \Cref{lemma: product} always gives exactly $0$. The product is then added to our intermediate result, which needs $1$ neuron in each layer to be stored because it remains bounded for now. In the end we calculate $l(x)$ and add it to our intermediate result to get the final approximation. All in all, this needs $O(\ln(\varepsilon^{-1})^3)$ layers with $11$ neurons each.
\end{proof}

Finally, we can put all the pieces together to prove that a ReLU networks can approximate neural networks with ReLU-computable activation functions.

\begin{proof}[Proof of \Cref{theorem: relu calculates ReLU-computable}]
Similar to \Cref{theorem: ReLU-computing calculates relu} we would like to replace all of the activation functions by the blocks given in \Cref{lemma: affine approximation of activation function}. Each layer multiplies the error of the previous layers by a factor of at most $C(N+1)$ and adds its own error. Since $\rho$ is Lipschitz continuous with constant $1$, the error of the entire network is given by the sum over $k$ of $(C(N+1))^{L-k}$ times the error of the $k$th layer. Therefore, we can apply \Cref{lemma: affine approximation of activation function} with 
\begin{align*}
    \varepsilon_k \coloneqq \frac{\varepsilon}{L(C(N+1))^{L-k}}
\end{align*}
to get an overall error of $\varepsilon$. Note that we need $O(\ln(\varepsilon_k^{-1})^3)$ layers in \Cref{lemma: affine approximation of activation function}. Together with
\begin{align*}
    \sum_{k=1}^L \ln(\varepsilon_k^{-1})^3 &= \sum_{k=1}^L (\ln(L\varepsilon^{-1}) + (L-k)\ln(C(N+1)))^3 \\
    &= O(L\ln(L\varepsilon^{-1})^3 + L^4\ln(CN)^3)
\end{align*}
we get the desired result.
\end{proof}

\newpage

\section{Exporting Results to Other Activation Functions}

In the last section we showed that on the one hand ReLU networks can be approximated by networks with ReLU-computing activation functions. On the other hand ReLU networks can approximate networks with ReLU-computable activation functions. Thus, we can use the following framework to export results for ReLU networks to the more general classes of networks with ReLU-computing and ReLU-computable activation functions:

\begin{enumerate}
    \item Approximate with a ReLU-network using \Cref{theorem: relu calculates ReLU-computable}.
    \item Use the result for ReLU networks.
    \item Approximate the resulting network with \Cref{theorem: ReLU-computing calculates relu}.
\end{enumerate}

An example for the application of this framework is the following width inefficiency result.

\begin{proof}[Proof of \Cref{theorem: width inefficiency for ReLU-computable activation}]
Assume that this is not true. Consider the function $f$ from \Cref{theorem: width inefficiency in 1 dimension} with $L^{10}$ instead of $L$. Since we are doing a proof by contradiction, we use the framework in another order.
\begin{enumerate}
    \item Using \Cref{theorem: ReLU-computing calculates relu} we can approximate $f$ with the help of a network of depth $O(L^{40})$, width $24$ and activation function $\rho$.
    \item By assumption this can be approximated by a neural network with activation function $\rho$ of depth $L$ and subexponential width and coefficients.
    \item By \Cref{theorem: relu calculates ReLU-computable} this can in turn be approximated by a ReLU network of depth $O(L^4\ln(CN)^3)$ and subexponential width.
    \item However, this is a contradiction to \Cref{theorem: width inefficiency in 1 dimension} because any network of depth $\leq L^5$ with this property needs exponential width.
\end{enumerate}
\end{proof}

The exponent of $L^{40}$ is far from optimal. The point of \Cref{theorem: width inefficiency for ReLU-computable activation} is that it is still subexponential, which is a significant difference to \Cref{theorem: depth efficiency}. \\

Another observation is that we could apply \Cref{theorem: ReLU-computing calculates relu} because we knew that the coefficients in \Cref{theorem: width inefficiency in 1 dimension} are bounded by $6$. Furthermore, we did not need to control the coefficients in \Cref{theorem: relu calculates ReLU-computable} or \Cref{theorem: ReLU-computing calculates relu}. Unfortunately, this might be necessary for other results like the depth efficiency result in \Cref{theorem: depth efficiency}. \\

The key takeaway is the following: If one can control the coefficients, then a result that is true for ReLU networks is also true for other activation functions.

\newpage

\section*{Acknowledgement}
\addcontentsline{toc}{section}{Acknowledgement}

I am very grateful to my advisor Christoph Thiele for suggesting this fascinating subject and for many helpful discussions. Furthermore, I would like to thank Johannes Linn and Lars Becker for reading over this thesis.

\newpage
\emergencystretch=2em
\printbibliography[heading = bibintoc]

\end{document}